\pgfplotsset{compat=1.15}
\definecolor{ffqqqq}{rgb}{1,0,0}
\definecolor{qqzzqq}{rgb}{0,0.6,0}
\newcommand{\R}{{\mathbb R}}
\newcommand{\half}{\frac{1}{2}}
\newtheorem{theorem}{Theorem}[section]
\newtheorem{lemma}[theorem]{Lemma}
\newtheorem{prop}[theorem]{Proposition}
\newcommand{\creg}{C_{\mathrm{reg}}}
\newcommand{\cc}{\mathfrak{c}}
\DeclareMathOperator{\arccot}{\mathrm{arccot}}
\DeclareMathOperator{\diam}{\mathrm{diam}}
\DeclareMathOperator{\conv}{\mathrm{conv}}
\DeclareMathOperator{\area}{\mathrm{area}}
\DeclareMathOperator{\pos}{pos}
\DeclareMathOperator{\per}{per}
\newcommand{\inte}{{\operatorname{int}}}
\DeclareFontFamily{U}{tipa}{}
\DeclareFontShape{U}{tipa}{m}{n}{<->tipa10}{}
\newcommand{\arc@char}{{\usefont{U}{tipa}{m}{n}\symbol{62}}}%
\newcommand{\arc}[1]{\mathpalette\arc@arc{#1}}
\newcommand{\arc@arc}[2]{%
  \sbox0{$\m@th#1#2$}%
  \vbox{
    \hbox{\resizebox{\wd0}{\height}{\arc@char}}
    \nointerlineskip
    \box0
  }%
}
\title{The isominwidth problem on the 2-sphere}
\author{Ansgar Freyer}
\thanks{Ansgar Freyer is funded by the Deutsche Forschungsgemeinschaft (DFG, German Research Foundation) - 539867386.}
\author{\'Ad\'am Sagmeister}
\thanks{}
\address{FU Berlin, Fachbereich Mathematik und Informatik, Arnimallee 2, D-14195 Berlin, Germany} \email{a.freyer@fu-berlin.de}
\address{Bolyai Institute, University of Szeged, Aradi vértanúk tere 1, H-6720 Szeged,
Hungary} \email{sagmeister.adam@gmail.com}
\subjclass[2020]{Primary: 52A40, 51M09, 51M10, 52A55}
\keywords{convex geometry, spherical geometry, minimal width, thickness, reduced bodies, bodies of constant width}
\begin{document}

\begin{abstract}
P\'al's isominwidth theorem states that for a fixed minimal width, the regular triangle has minimal area. A spherical version of this theorem was proven by Bezdek and Blekherman, if the minimal width is at most $\tfrac \pi 2$. If the width is greater than $\tfrac \pi 2$, the regular triangle no longer minimizes the area at fixed minimal width. We show that the minimizers are instead given by the polar sets of spherical Reuleaux triangles. Moreover, stability versions of the two spherical inequalities are obtained.
\end{abstract}

\maketitle

\section{Introduction}
\label{sec:intro}

The (minimal) width $w(K)$, or sometimes also referred to as thickness, of a convex body $K\subset\R^n$, i.e., of  a compact convex set in $\R^n$, is commonly defined as the minimum distance of two parallel supporting hyperplanes of $K$. 
Back in 1921, P\'al studied the relation between the area and the width of a 2-dimensional convex body $K\subset\R^2$. He proved that if the width $w$ is fixed, the unique (up to isometry) convex body $K\subset\R^2$ of minimal area is the regular triangle whose width is $w$ \cite{Pal}. This beautiful result may be regarded as a dual version of the isodiametric inequality, which states among all convex bodies $K\subset\R^n$ of fixed diameter $D$, the Euclidean $n$-balls of diameter $D$ uniquely maximize the volume \cite[Eq.\  (7.22)]{Sch14}. While the isodiametric inequality holds in all dimensions, a higher-dimensional analog of P\'al's result is not at hand. The problem of determining among the convex bodies $K\subset\R^n$ for fixed width $w$ the ones of minimal volume is known as P\'al's problem, or as the isominwidth problem.

Apart from the Euclidean setting, an isominwidth (or isodiametric) problem may be posed in each kind of geometry that allows for a definition of width (diameter) and volume. For spaces of constant curvature the isodiametric problem has been solved by Schmidt \cite{Sch48,Sch49} and
B\"or\"oczky, Sagmeister \cite{BoS20,BoS23}.
On the 2-sphere $S^2$, the isominwidth inequality has been proven by Bezdek and Blekherman \cite{Bez00} for width $w\leq\frac{\pi}{2}$. The spherical width functional $w(K)$ is defined in Section \ref{subsec:width}.

\begin{theorem}[Bezdek--Blekherman\cite{Bez00}]\label{isominwidth}
Let $K\subset S^2$ be a convex body of width $0<w\leq\frac{\pi}{2}$, and let $T_{w}\subset S^2$ denote an equilateral triangle of width $w$. Then we have
$$
\area\left(K\right)\geq \area\left(T_{w}\right).
$$
\end{theorem}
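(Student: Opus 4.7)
\medskip

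\noindent\textbf{Proof plan.} The strategy I would adopt is to follow the classical recipe of P\'al, adapted to the spherical setting. First, using a spherical Blaschke selection argument on the closed family of spherically convex bodies of width at least $w$, together with continuity of the area and of the width functional in the Hausdorff metric, the infimum of the area is attained by some body $K^*$ with $w(K^*)=w$. Any such minimizer is necessarily \emph{reduced}, in the sense that no proper spherically convex subset of $K^*$ has width $w$: otherwise that subset would yield a strictly smaller area while satisfying the constraint. So the whole problem reduces to a lower area bound on reduced spherical convex bodies of width $w$.

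The main step is then a spherical analog of the structure theory of reduced bodies due to Lassak and others. I would aim to show that any reduced spherically convex body of width $w\leq\tfrac{\pi}{2}$ has area at least $\area(T_w)$. One approach is to approximate such a body by reduced spherical polygons and argue combinatorially: reducedness forces each side of a reduced polygon $P$ to be ``witnessed'' by a vertex of $P$ lying at spherical distance exactly $w$ from the great circle carrying that side, since otherwise one could slightly translate this side inward while preserving the width. Coupled with spherical convexity and the spherical law of cosines, this rigidity severely restricts the shape of $P$ and should allow one to conclude that $\area(P)\geq\area(T_w)$, with equality iff $P=T_w$; a limiting argument then recovers the bound for arbitrary reduced bodies.

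The hypothesis $w\leq\tfrac{\pi}{2}$ plays an essential role in this plan: only in this regime are two parallel supporting great circles at spherical distance $w$ uniquely determined by their common normal direction, and only then does the equilateral spherical triangle of width $w$ lie inside an open hemisphere so that spherical convexity behaves like its Euclidean counterpart. The main obstacle I anticipate is precisely the structural analysis of reduced polygons on $S^2$: in the Euclidean case one exploits translational and rotational symmetries of the plane to compare widths across different directions by rigid motions, whereas on the sphere those motions distort widths, and so the rigidity conditions at the different sides of $P$ couple nonlinearly through the spherical trigonometric identities. Extracting from this coupled system an explicit area lower bound matching $\area(T_w)$ is the heart of the proof; once it is done, the general statement follows by the existence, reduction, and approximation steps above.
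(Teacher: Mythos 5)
Your proposal is a plan rather than a proof: the two framing steps (existence of a minimizer by Blaschke selection, and the observation that a minimizer must be reduced) are fine but carry essentially no content, while the step that constitutes the entire theorem --- showing that every reduced spherical convex body, or every reduced spherical polygon, of width $w\leq\tfrac{\pi}{2}$ has area at least $\area(T_w)$ --- is only asserted (``this rigidity \emph{should} allow one to conclude''). Nothing in the proposal explains how the coupled trigonometric rigidity conditions at the sides of a reduced polygon yield the bound $\area(P)\geq\area(T_w)$, and you acknowledge this is ``the heart of the proof.'' In addition, the reduction-to-polygons step has a genuine obstruction: reduced bodies are not in general polygons (every body of constant width $w$, e.g.\ a ball of radius $w/2$ or a Reuleaux triangle, is reduced), and it is not clear that an arbitrary reduced body is a Hausdorff limit of \emph{reduced} polygons --- approximating polygons are generically not reduced, and passing to a reduction of an approximating polygon changes the body in a way that need not converge back to the original. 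So the limiting argument at the end is also unsupported.

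For comparison, the paper's proof (following Bezdek--Blekherman) avoids reduced-body structure theory entirely. It first proves a spherical Blaschke-type inradius bound (Lemma \ref{Blaschke}): among bodies of width $w\leq\tfrac\pi2$, the regular triangle minimizes the inradius. It then constructs inside $K$ a ``cap domain'' $C$, the convex hull of the incircle $B(p,r)$ and three points $q_i$ at distance $w-r$ from the incenter whose attached caps are non-overlapping (Proposition \ref{prop:capdomain}); after symmetrizing the caps to equal angles $\tfrac{2\pi}{3}$ without changing the area, the area of the resulting regularized cap domain is bounded below by an explicit, strictly monotone function $F(\eta)$ of $\eta=r(K)-r(T_w)$ with $F(0)=\area(T_w)$ (Lemma \ref{lemma:hexagon_formular}), which gives the inequality and the equality case. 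If you want to complete a proof along your lines you would need to supply the full rigidity-to-area argument for reduced polygons \emph{and} a valid approximation scheme; the cap-domain route is substantially shorter.
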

Moreover, equality holds in the spherical isominwidth problem only if $K$ is congruent with $T_{w}$ (cf.\ Section \ref{sec:pal:smallw}).

Recently various stability results on P\'al's problem in the Euclidean plane have been obtained by Lucardesi and Zucco \cite{LZ24}. Here, we study the stability of the spherical isominwidth problem. We denote the Hausdorff distance of two sets $K,L\subset S^n$ by $\delta\left(K,L\right)$. Our first main result is a stability version of P\'al's inequality on these planes.

\begin{theorem}
    \label{stab:thm}
  Let $T_w\subset S^2$ denote a regular triangle of width $0<w\leq\frac{\pi}{2}$. There exists a constant $\cc_w>0$ depending only on $w$ such that for any $\varepsilon>0$ and any convex body $K\subset S^2$ with $w(K) = w$ and
    $\area(K)\leq\area(T_w) + \varepsilon$ we have $\delta(K,T)\leq\cc_w\varepsilon$ for a certain regular triangle $T$ of width $w$.
\end{theorem}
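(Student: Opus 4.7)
My plan is to reduce to the setting of reduced bodies and then establish linear stability via a local analysis near the regular triangle, following the strategy used for the Euclidean analog by Lucardesi and Zucco \cite{LZ24}. Given $K$ with $w(K)=w$ and $\area(K)\leq\area(T_w)+\varepsilon$, I would apply Zorn's lemma to the family of convex subsets of $K$ of width $w$ to produce a reduced body $R\subseteq K$ with $w(R)=w$. Theorem \ref{isominwidth} then yields $\area(T_w)\leq\area(R)\leq\area(K)\leq\area(T_w)+\varepsilon$, so the problem splits into (i) proving linear stability for $R$ relative to the isometry orbit $\mathcal T_w$ of $T_w$ in $S^2$, and (ii) bounding $\delta(K,R)$ in terms of $\varepsilon$ using that $R$ is triangle-like.

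For (i), a qualitative compactness argument via Blaschke selection and the uniqueness in Theorem \ref{isominwidth} shows that any reduced body $R$ of width $w$ with $\area(R)$ close enough to $\area(T_w)$ must be Hausdorff-close to some $T\in\mathcal T_w$. This localizes the question to a small Hausdorff neighborhood $U$ of $T_w$ in the space of reduced bodies of width $w$. The crux is to establish the linear bound
\[
    \area(R)-\area(T_w)\geq c\cdot\delta(R,\mathcal T_w) \qquad \text{for all such } R\in U.
\]
To this end I would parametrize $R$ by finitely many coordinates modelled on the three vertices and the three boundary arcs of $T_w$, and exploit that the regular triangle is a polytopal (non-smooth) minimizer: the extremality conditions at its three vertex--edge width-witnessing pairs should force strictly positive one-sided directional derivatives of the area functional along all directions transverse to the three-dimensional orbit $\mathcal T_w$, giving linear (rather than quadratic) growth. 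Carrying this out uniformly across all transverse directions, while respecting the width constraint $w(R)=w$, is the main technical obstacle of the proof.

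For (ii), once $R$ is Hausdorff-close to a triangle $T\in\mathcal T_w$, it sandwiches between inner and outer $\eta$-parallel bodies of $T$ for $\eta$ small, and in particular contains chords of length comparable to $w$ facing every exterior direction. If $x\in K\setminus R$ attains $d=\delta(K,R)$, then $\conv(\{x\}\cup R)\subseteq K$ contains a spherical triangle with apex $x$ and base a chord of $R$ of length bounded below by a constant depending only on $w$, giving an area of at least $c_w d$ outside $R$, provided $d$ exceeds the deviation scale of $R$ from $T$ (if not, $d$ is already controlled by step (i)). Hence $\delta(K,R)\leq C'_w\varepsilon$, and the triangle inequality yields $\delta(K,\mathcal T_w)\leq\cc_w\varepsilon$ as claimed. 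The principal difficulty remains the uniform linear growth estimate in (i).
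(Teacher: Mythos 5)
Your proposal does not close the central quantitative step, and you say so yourself: the uniform linear growth estimate $\area(R)-\area(T_w)\geq c\,\delta(R,\mathcal T_w)$ in step (i) is exactly the content of the theorem (restricted to reduced bodies), and the two tools you offer for it do not suffice. Blaschke selection plus the uniqueness in Theorem \ref{isominwidth} only localizes $R$ near the orbit of $T_w$ qualitatively, with no rate and no effective constant. The subsequent plan to ``parametrize $R$ by finitely many coordinates modelled on the three vertices and the three boundary arcs'' is not justified: reduced spherical bodies of width $w\leq\tfrac{\pi}{2}$ near $T_w$ do not form a finite-dimensional family (the constraint $w(R)=w$ leaves infinitely many degrees of freedom in the boundary), so a one-sided directional-derivative analysis over a finite-dimensional transversal cannot be ``carried out uniformly across all transverse directions'' without a genuinely new compactness or structure argument that you do not supply. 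As written, step (i) restates the problem rather than solving it.

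For contrast, the paper avoids this difficulty entirely by collapsing the infinite-dimensional problem to a single scalar parameter. Lemma \ref{Blaschke} and the Bezdek--Blekherman cap-domain construction give $\area(K)\geq F(r(K)-r(T_w))$ for an explicit function $F$ (Lemma \ref{lemma:hexagon_formular}) with $F(0)=\area(T_w)$ and $F'(0)>0$; the elementary Lemma \ref{stab:taylor} then yields the linear bound $r(K)-r(T_w)\leq \cc_w\varepsilon$ on the inradius deficit. All remaining geometric data (contact angles with the incircle, positions of the cap peaks, the circumscribed tangent triangle) are then shown to be Lipschitz functions of this one parameter via explicit spherical trigonometry and the implicit function theorem, which converts the inradius estimate into the Hausdorff estimate $\delta(K,T)\leq\cc_w\varepsilon$. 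No classification of reduced bodies near $T_w$ is needed, and indeed the paper never passes to a reduction of $K$ in the case $w\leq\tfrac{\pi}{2}$. Your step (ii) (bounding $\delta(K,R)$ by the area of a sliver with apex at the farthest point) is a reasonable idea and roughly parallels how the paper compares $K$ with the circumscribed tangent triangle, but it cannot rescue the proof while step (i) remains a placeholder.
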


One of the main difficulties in the study of the isominwidth problem for $n>2$ is the absence of a natural candidate for a minimizer. While for $n=2$, the regular triangle is the unique minimizer of the volume at given width, the same cannot be said about the regular tetrahedron $T\subset\R^3$. Since the width of the regular tetrahedron is attained by two supporting hyperplanes at opposite edges, one might truncate one of the vertices of $T$ slightly and obtain a convex body $K$ strictly contained in $T$ without affecting the width. This leads to the definition of \emph{reduced convex bodies}. A convex body $K$ is called reduced, if there is no convex body $K^\prime \subsetneq K$ with $w(K^\prime) = w(K)$.  Reduced bodies were first introduced by Heil \cite{H78}. Since the volume is a strictly increasing functional on the class of convex bodies, it is clear that extremizers of the isominwidth problem need to be reduced. 

It has been pointed out by Lassak in his survey \cite{La20} that a regular triangle of width $w>\tfrac{\pi}{2}$ is not a reduced body in $S^2$. Therefore, we cannot expect Theorem \ref{isominwidth} to hold in $S^2$ if $w > \tfrac \pi 2$. We use the structure of reduced bodies and their relatives, \emph{bodies of constant width}, to extend Theorem \ref{isominwidth} to the case where $w>\tfrac{\pi}{2}$.

\begin{theorem}
\label{intro:thm:largewidth}
    Let $K\subset S^2$ be a convex body of width $w > \tfrac{\pi}{2}$. Then we have
    \[
    \area(K)\geq\area(U_{\pi-w}^\circ),
    \]
    where $U_{\pi-w}^\circ$ is the convex hull of three spherical caps of radius $w-\tfrac{\pi}{2}$ whose centers form a regular spherical triangle with edge length $\pi -w $. Equality holds if and only if $K$ is congruent with $U_{\pi-w}^\circ$.
\end{theorem}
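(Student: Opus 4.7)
The plan is to translate the inequality, via spherical polar duality, into a dual extremal problem about perimeter at fixed diameter, which admits the spherical Reuleaux triangle as a natural candidate maximizer. Throughout I would work with the spherical polar
\[
K^\circ \;=\; \{y\in S^2:\ d_{S^2}(x,y)\geq \tfrac{\pi}{2}\text{ for every } x\in K\},
\]
which, for $K$ contained in an open hemisphere, is again a convex body on $S^2$. The key classical identities I would record first are
\[
\diam(K^\circ)+w(K) \;=\; \pi \qquad\text{and}\qquad \area(K)+\per(K^\circ) \;=\; 2\pi;
\]
the former comes from the bijection between supporting great circles of $K$ and points of $K^\circ$, the latter from the spherical Gauss--Bonnet theorem applied to $K^\circ$ (and is easily checked on a spherical cap).

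For our $K$ with $w(K)=w\in(\tfrac{\pi}{2},\pi)$, the polar $K^\circ$ is thus a convex body of diameter $\pi-w\in(0,\tfrac{\pi}{2})$. Moreover $U_{\pi-w}^\circ$ is, by construction, the polar of the Reuleaux triangle $U_{\pi-w}$: the polar of the intersection of three caps of radius $\pi-w$ centered at an equilateral triangle of edge length $\pi-w$ is precisely the convex hull of three caps of radius $w-\tfrac{\pi}{2}$ at the antipodes of those vertices, matching the description in the statement. Consequently, using $\area=2\pi-\per(\cdot^\circ)$, the desired inequality $\area(K)\geq\area(U_{\pi-w}^\circ)$ is equivalent to the dual inequality
\[
\per(K^\circ) \;\leq\; \per(U_{\pi-w}).
\]

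What remains, and what I expect to be the main obstacle, is a spherical Rosenthal--Sz\'asz-type theorem: among all spherical convex bodies $L\subset S^2$ of diameter $d\in(0,\tfrac{\pi}{2})$, the Reuleaux triangle $U_d$ uniquely attains the maximum perimeter. My plan is to prove this in two steps. First, I would appeal to the spherical completion procedure (Lassak's analogue of P\'al's completion lemma) to embed $L$ in a body $\tilde L$ of constant width $d$; since $L\subset\tilde L$ and both lie in an open hemisphere, perimeter monotonicity under inclusion (a radial-projection argument from any interior point of $L$, using that on $[0,\tfrac{\pi}{2})$ the circumference $2\pi\sin r$ is increasing) yields $\per(\tilde L)\geq\per(L)$, reducing the problem to bodies of constant width $d$. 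Second, among spherical bodies of constant width $d<\tfrac{\pi}{2}$, I would show that the Reuleaux triangle uniquely maximizes the perimeter, exploiting the structural description of constant-width bodies on $S^2$ as intersections of caps of radius $d$ centered on boundary points, as developed in the earlier sections of the paper.

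Finally, the equality case follows by retracing the chain of inequalities: equality in $\per(K^\circ)\leq\per(U_{\pi-w})$ forces $K^\circ$ to be a spherical Reuleaux triangle of width $\pi-w$, whence $K=(K^\circ)^\circ=U_{\pi-w}^\circ$ up to an isometry of $S^2$.
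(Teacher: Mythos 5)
Your duality framework is exactly the right one and coincides with the paper's: both identities you record, $\diam(K^\circ)=\pi-w(K)$ (this is the content of Lemma \ref{lemma:luneslines}) and $\area(K)+\per(K^\circ)=2\pi$, are correct, and your identification of $U_{\pi-w}^\circ$ agrees with Proposition \ref{prop:uwcirc}. The genuine gap is that the entire weight of the theorem rests on the dual extremal statement you postpone --- that among spherical convex bodies of diameter $d<\tfrac\pi2$ the Reuleaux triangle uniquely maximizes the perimeter --- and your indicated proof of it does not go through as described. There is no ``structural description of constant-width bodies as intersections of caps centered on boundary points'' developed earlier in the paper, and no elementary argument of that kind is supplied. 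What actually closes this step are two substantial external theorems: Ara\'ujo's spherical Barbier formula $\per(L)=(2\pi-\area(L))\tan(d/2)$ for bodies of constant width $d$ (Theorem \ref{thm:araujo}), which converts perimeter maximization into area minimization, and the spherical Blaschke--Lebesgue theorem of Leichtweiss (Theorem \ref{thm:leichtweiss}), which identifies the Reuleaux triangle as the unique area minimizer. Without invoking (or reproving) these, your central inequality $\per(K^\circ)\leq\per(U_{\pi-w})$ is an assertion, not a proof.

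Two further points. First, your reduction from arbitrary diameter-$d$ bodies to constant-width bodies via completion is a legitimate alternative to the paper's route (the paper instead replaces $K$ by a reduction and applies Lassak's Theorem \ref{cor:lassak} to conclude that $K^\circ$ has constant width), but your justification of perimeter monotonicity is shaky: radial projection from an interior point is not $1$-Lipschitz, and the remark about $2\pi\sin r$ being increasing does not repair this. Use the nearest-point projection onto a convex subset of an open hemisphere, or the spherical Cauchy formula; either also gives the strict monotonicity you need for the equality case (to force $K^\circ$ to equal its completion). Second, once you do route the argument through Barbier and Blaschke--Lebesgue, your proof becomes essentially the paper's proof of Theorem \ref{thm:largewidth} with an extra detour, so the completion step buys nothing over simply passing to a reduction of $K$ at the outset.
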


Note that for $w=\tfrac \pi 2$ the body $U^\circ_{\pi/2}$ coincides with the regular triangle of width $\tfrac \pi 2$, so Theorem \ref{intro:thm:largewidth} is indeed accordant with Theorem \ref{isominwidth}.
In fact, the extremal body $U_{\pi-w}^\circ$ in the above theorem is the spherical polar body of the so-called Reuleaux triangle $U_{\pi-w}$ of width $\pi-w$ (see Section \ref{subsec:polarity}).

We also show the following stability version of Theorem~\ref{intro:thm:largewidth}.

\begin{theorem}
\label{intro:thm:largewidthstab}
    Let $U_{\pi-w}\subset S^2$ denote a Reuleaux triangle of width $\pi-w$ for $w\in\left(\frac{\pi}{2},\pi\right)$. There exists a constant $\cc_w>0$ depending only on $w$ such that for any $\varepsilon>0$ and any convex body $K\subset S^2$ with
    $w(K)=w$ and $\area(K)\leq\area(U_{\pi-w}^\circ) + \varepsilon$ we have $\delta(K,U^\circ)\leq\cc_w\sqrt\varepsilon$ for a certain Reuleaux triangle $U$ of width $\pi-w$.

\end{theorem}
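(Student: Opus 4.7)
The plan is to exploit the classical spherical identity $\area(L^\circ) + \per(L) = 2\pi$ for convex bodies $L$ contained in an open hemisphere, together with polar duality. Under polarity, a reduced body of width $w > \pi/2$ corresponds to a body of constant width $\pi - w$, and area on the original side becomes $2\pi$ minus perimeter on the polar side. Theorem~\ref{intro:thm:largewidth} should rest on identifying the Reuleaux triangle $U_{\pi-w}$ as the maximizer of perimeter among spherical constant-width bodies of width $\pi - w$; the goal here is to upgrade that extremality to a quantitative statement and transport the resulting stability back through polarity to $K$ itself.

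First I would pass to a reduced body $K' \subseteq K$ with $w(K') = w$; existence of such $K'$ is standard in the theory of spherical reduced bodies. By Theorem~\ref{intro:thm:largewidth}, $\area(U_{\pi-w}^\circ) \leq \area(K') \leq \area(K) \leq \area(U_{\pi-w}^\circ) + \varepsilon$. Since $K'$ has inradius bounded below in terms of $w$ alone (because $w > \pi/2$), the containment $K' \subseteq K$ with area gap at most $\varepsilon$ forces $\delta(K, K') \leq c_w \sqrt{\varepsilon}$ by a standard estimate: a point of $K$ at spherical distance $d$ from $K'$ produces a cap of radius of order $d$ inside $K \setminus K'$, of area of order $d^2$. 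Thus it suffices to bound $\delta(K', U^\circ)$ for a suitable Reuleaux triangle $U$.

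Applying the area-perimeter identity to $K'$ and to $U_{\pi-w}^\circ$ gives
\[
\per(U_{\pi-w}) - \per((K')^\circ) \;=\; \area(K') - \area(U_{\pi-w}^\circ) \;\in\; [0, \varepsilon].
\]
Hence $(K')^\circ$ is a body of constant width $\pi - w$ whose perimeter is within $\varepsilon$ of that of $U_{\pi-w}$. I would then invoke a stability version of the spherical Blaschke--Lebesgue-type extremality: if $L$ has constant width $\pi - w$ and $\per(U_{\pi-w}) - \per(L) \leq \varepsilon$, then $\delta(L, U) \leq c \sqrt{\varepsilon}$ for some Reuleaux triangle $U$ of width $\pi - w$. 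Applying this to $L = (K')^\circ$ and then taking polars, which is Lipschitz on bodies uniformly separated from a great circle (an assumption satisfied here, since all relevant widths and diameters stay strictly between $0$ and $\pi$), yields $\delta(K', U^\circ) \leq c_w \sqrt{\varepsilon}$. Combined with the first step, this produces $\delta(K, U^\circ) \leq \cc_w \sqrt{\varepsilon}$.

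The main obstacle is the quantitative spherical Blaschke--Lebesgue extremality for the perimeter functional. The square-root exponent is intrinsic: the Reuleaux triangle $U_{\pi-w}$ has three circular-arc sides (spherical caps of radius $\pi - w$), and a tangential perturbation of magnitude $\sqrt{\varepsilon}$ along one of these arcs affects the perimeter by only $O(\varepsilon)$, so linear stability would be impossible. This contrasts with Theorem~\ref{stab:thm}, whose extremizer is polytopal and therefore enjoys linear stability. Establishing the sharp perimeter-stability estimate for constant-width bodies on $S^2$, likely via a support-function analysis around the extremizer combined with the classification of constant-width bodies close to a Reuleaux triangle, will be the main technical ingredient.
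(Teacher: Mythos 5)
Your overall architecture coincides with the paper's: pass to a reduction, control $\delta(K,K')$ by an area--Hausdorff estimate of order $\sqrt{\varepsilon}$, polarize to a constant-width body of width $\pi-w$, apply a stability version of the Blaschke--Lebesgue extremality of the Reuleaux triangle, and transport the conclusion back via the fact that polarity preserves Hausdorff distance (the paper uses Glasauer's exact identity $\delta(K,L)=\delta(K^\circ,L^\circ)$ for $\delta<\pi/2$). The reduction step is also essentially the paper's Lemma~\ref{lemma:red_dist}, though the paper avoids your inradius lower bound by working with the supporting lune through the far point, whose intersection with a small cap has area $\geq \cc_w\,\delta(K,L)^2$.

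The genuine gap is your ``main technical ingredient'': the quantitative Blaschke--Lebesgue statement for constant-width bodies is asserted, not proved, and your plan to establish it from scratch via support-function analysis is exactly the hard part you cannot defer. The paper closes this by two moves you are missing. First, Ara\'ujo's spherical Barbier theorem, $\per(L)=(2\pi-\area(L))\tan\bigl(\tfrac{\pi-w}{2}\bigr)$ for $L$ of constant width $\pi-w$, converts your perimeter deficit $\per(U_{\pi-w})-\per((K')^\circ)\leq\varepsilon$ into the area excess $\area((K')^\circ)\leq\area(U_{\pi-w})+\cot\bigl(\tfrac{\pi-w}{2}\bigr)\varepsilon$; second, the known stability of the spherical Blaschke--Lebesgue inequality (B\"or\"oczky--Sagmeister, Theorem~\ref{BL-stab}) then gives $\delta((K')^\circ,U)\leq\cc_w\varepsilon$ directly. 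Note that this rate is \emph{linear}, which also shows your heuristic against linear stability is wrong: a tangential $\sqrt{\varepsilon}$-perturbation of one circular arc of $U_{\pi-w}$ does not stay in the class of constant-width bodies, so it is not an admissible competitor, and within that class the area (equivalently, by Barbier--Ara\'ujo, the perimeter) controls the Hausdorff distance to first order. The $\sqrt{\varepsilon}$ in the theorem originates solely from the reduction step $\delta(K,K')\leq\cc_w\sqrt{\varepsilon}$, not from the constant-width stability.
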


The paper is organized as follows. In Section \ref{secconstant} we provide the necessary material from spherical convex geometry. In Section \ref{sec:pal:smallw} we first revisit the proof of Theorem \ref{isominwidth}. We take a more analytic perspective on the proof that will allow us to identify regular triangles as the unique minimizers in the isominwidth problem if the width is not greater than $\tfrac \pi 2$. Afterwards, we refine this analysis to obtain Theorem \ref{stab:thm}. In Section \ref{sec:pal:largew} we solve the isominwidth problem in the obtuse setting (Theorem \ref{intro:thm:largewidth}) and prove a stability version (Theorem \ref{intro:thm:largewidthstab}). 

\section{Convexity on the sphere}
\label{secconstant}

For a pair of non-antipodal points $x,y\in S^n$, the unique geodesic segment connecting $x$ and $y$ is denoted by $\left[x,y\right]$. The spherical distance of $x$ and $y$ is denoted by $d\left(x,y\right)$ and the diameter of $X$ is denoted by $\diam X$. We call a set $X\subseteq S^n$ \emph{convex}, if it is contained in an open hemisphere and for each pair of points $x,y\in X$, we have $\left[x,y\right]\subseteq X$. Similarly to Euclidean convexity, intersection preserves convexity on the sphere too, so we can define the \emph{convex hull} of a subset $X$ of an open hemisphere as the intersection of all spherically convex sets containing $X$ denoted as $\conv X$. If $X = \{x_1,\dots, x_k\}$ is a finite set, we also write $[x_1,\dots,x_k]=\conv\{x_1,\dots,x_k\}$.  A set $X\subseteq S^n$ is called a \emph{convex body} if it is convex, compact, and its interior is not empty. Spherical caps (balls in $S^n$ with respect to the spherical metric) are denoted as $B\left(x,r\right)$ where $x$ is the center of the ball, while $0<r\leq\pi$ denotes its radius. We note that balls of radius $r$ are convex if and only if $r<\frac{\pi}{2}$. 

\subsection{Width of spherical convex bodies}\label{subsec:width}

A notion of width in $S^n$ is defined with lunes by Lassak\cite{La15}. A \emph{lune} is an intersection of two different closed hemispheres of the unit sphere $S^n$ whose centers are not antipodal. The boundary of a lune consists of two $(n-1)$-dimensional hemispheres that intersect in an $(n-2)$-sphere.
Given a lune $L\subset S^n$, the \emph{breadth} $b(L)$ of $L$ is the geodesic distance of the centers of the two $(n-1)$-dimensional hemispheres bounding $L$. For a spherically convex body $K\subset S^n$ the lune $L$ is a \emph{supporting lune} if $K\subseteq L$ and $K$ intersects both hemispheres bounding $L$. %
The width of a spherically convex body regarding a supporting hyperplane $H$ is the minimal breadth among all the supporting lunes to $K$ such that one of the bounding $\left(n-1\right)$-dimensional hemispheres of these supporting lunes is contained in $H$; we use the notation $w_K\left(H\right)$ for this quantity. Lassak showed \cite{La15}, that the spherical width function is continuous, so for a convex body both the maximal and the minimal width are attained. The \emph{(minimal) width} of a convex body $K\subset S^n$ is
$$
w\left(K\right)=\min\left\{w_K\left(H\right)\colon H\text{ is a supporting hyperplane to }K\right\}.
$$
A convex body $K\subset S^n$ is called \emph{reduced}, if for any convex body $L\subsetneq K$, we have $w\left(L\right)<w\left(K\right)$. A \emph{reduction} of a convex body $K$ is a reduced convex body $L\subseteq K$ with $w\left(L\right)=w\left(K\right)$. It follows from Zorn's Lemma that every convex body admits a reduction. 

In this paper, we say that $K$ is \emph{of constant width}, if its width functional $w_K(H)$ is constant on the set of supporting hyperplanes of $K$. This is also the definition used in Lassak's survey \cite{La20}. It should be mentioned that several other definitions of ``constant width'' are in use for spherical convex bodies. Some of the results that we use in the following are originally stated for different definitions of constant width. Specifically, these are Theorems \ref{thm:leichtweiss}, \ref{thm:araujo} and \ref{BL-stab}.  However, according to Lassak \cite[Section 12]{La20}, most of these notions are equivalent to the definition of constant width that we fixed. The only exception is the defintion proposed by Dekster \cite{Dek95} which is at the heart of Theorem \ref{BL-stab}. Dekster's definition is however equivalent to the definition that we use provided that $w(K)\leq \tfrac\pi 2$. Since Theorem \ref{BL-stab} holds only for such convex bodies we do not need to distinguish the various definitions of constant width in the following.

It is clear that a body of constant width is reduced. Remarkably, the converse also holds, provided that $w(K) \geq \tfrac \pi 2$:

 \begin{theorem}[Lassak, \cite{La201}]
\label{cor:lassak}
Let $K\subset S^n$ be a reduced convex body of width $w\geq \tfrac\pi 2$. Then $K$ is of constant width $w$.
\end{theorem}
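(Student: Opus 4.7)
The natural route is a proof by contradiction. Assume $K$ is reduced of width $w\geq\tfrac{\pi}{2}$ but \emph{not} of constant width, so there exists a supporting hyperplane $H^\star$ with $w_K(H^\star)>w$. By continuity of $H\mapsto w_K(H)$ (noted in Section~\ref{subsec:width}), the set of supporting hyperplanes at which the width exceeds $w$ is open in the space of supporting hyperplanes. I would use this to locate a point $p\in\partial K$ together with a supporting hyperplane $H^\star$ through $p$ such that $p$ lies in the relative interior of $K\cap H^\star$ and $w_K(H^\star)>w$ on a whole neighborhood of $H^\star$.

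The idea is now to ``shave off'' a thin slice of $K$ near $p$: replace $H^\star$ by a slightly pushed-in hyperplane $H^\prime$ and define $K^\prime := K\cap H^{\prime +}\subsetneq K$, where $H^{\prime +}$ is the closed half-sphere bounded by $H^\prime$ containing the bulk of $K$. I would then argue that $w(K^\prime)=w$ for all sufficiently small shaves, contradicting the reducedness of $K$. The easy direction $w(K^\prime)\leq w$ follows from $K^\prime\subset K$. For the reverse, one checks that no supporting lune of $K^\prime$ has breadth $<w$. Supporting lunes of $K^\prime$ come in three flavors: those inherited from supporting lunes of $K$ that do not see the cut (automatically of breadth $\geq w$); the new ``facet'' lune at $H^\prime$ itself, where by continuity and our choice of neighborhood $w_{K^\prime}(H^\prime)>w$ for small enough shaves; and new supporting lunes hinging on the cut edge $K\cap H^\prime$, which are small perturbations of supporting lunes of $K$ at points of $K\cap H^\star$, where $w_K(\cdot)>w$ already holds strictly.

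The hypothesis $w\geq\tfrac\pi 2$ enters in controlling the third type of supporting lunes, and this is the main obstacle I expect. What one really needs is a \emph{uniform} continuity statement: as the hyperplane $H$ varies around $H^\star$ and $K$ is replaced by $K^\prime$, the value $w_{K^\prime}(H)$ stays close to $w_K(H)$. In spherical geometry, such uniformity requires the supporting lune of $K$ realizing $w_K(H)$ to have a stable ``opposite'' bounding hemisphere after the cut, and this is where the obtuse-breadth hypothesis becomes essential: lunes of breadth $\geq\tfrac\pi 2$ have their two bounding hemispheres sufficiently separated that the slice near $p$ cannot produce a smaller competing supporting lune on the opposite side of $K$. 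For $w<\tfrac\pi 2$ the conclusion genuinely fails — the regular triangle of width $w$ is reduced but not of constant width — so some essential use of the hypothesis $w\geq\tfrac\pi 2$ in precisely this step is unavoidable.
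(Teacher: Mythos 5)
First, a point of comparison: the paper offers no proof of Theorem \ref{cor:lassak} at all --- it is quoted as a known result of Lassak \cite{La201} --- so there is no in-paper argument to measure your sketch against, and I can only assess it on its own terms.

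Your cut-and-contradict strategy is the natural first attempt, but it has a genuine gap at precisely the step you file under ``type 1''. After passing to $K^\prime=K\cap H^{\prime +}$, the dangerous supporting lunes are not those ``hinging on the cut edge'': they are lunes whose first bounding hemisphere lies in a supporting hyperplane $H$ of $K$ far away from $p$, but whose \emph{opposite} bounding hemisphere touches $K$ only at (or near) $p$. For such $H$ one can have $w_{K^\prime}(H)<w_K(H)$, and possibly $<w$, even though $H$ and its contact set are completely untouched by the cut; your classification declares these ``automatically of breadth $\geq w$'', which is false. This is exactly the mechanism that makes the regular triangle of width $\leq\tfrac{\pi}{2}$ reduced: the minimal lune erected on an edge has its opposite hemisphere pinned at the single opposite vertex, so truncating that vertex strictly decreases the width. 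To close the gap you must show that your chosen $p$ (admitting a supporting hyperplane $H^\star$ with $w_K(H^\star)>w$) is not an essential contact point of the \emph{opposite} hemisphere of any lune of breadth exactly $w$; that requires a symmetry statement relating the breadth of the minimal lune erected on $H^\star$ to the breadths of minimal lunes erected on hyperplanes ``across from'' $p$, and this is where the hypothesis $w\geq\tfrac{\pi}{2}$ genuinely enters (for breadths below $\tfrac{\pi}{2}$ the two quantities are unrelated, as the triangle shows). Your sketch names this as ``the main obstacle I expect'' and offers a heuristic about separated hemispheres, but no argument --- and since this single step carries essentially all of the content of the theorem, the proposal is a plan rather than a proof. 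For the record, Lassak's published proof does not excise a slice: it proceeds through diameter--thickness relations for reduced spherical bodies (in particular that a reduced body of width $w\geq\tfrac{\pi}{2}$ satisfies $\diam(K)=w(K)$, which forces constant width), building on the lune calculus of \cite{La15}.
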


\subsection{Polarity on the sphere}\label{subsec:polarity}

We recall that for a set $A\subset S^n$, its \emph{polar set} is defined as
\[
A^\circ = \{u\in S^n \colon\left<a,u\right>\leq 0,~\forall a\in A\}. 
\]
If $A$ is a convex body on $S^n$, then $A^\circ$ is a convex body on $S^n$. In that case, we have $(A^\circ)^\circ = A$. The polarity on $S^n$ can be expressed in terms of the polarity of convex cones: For a set $A\subseteq S^n$ we consider its positive hull 
\[
\pos A = \{\lambda a\colon \lambda\geq 0, a\in A\}\subseteq \R^{n+1}.
\]
That way, $A$ is a convex body, if and only if $\pos A$ is a convex pointed cone. We have $A^\circ = (\pos A)^\star \cap S^n$, where 
$$
(\pos A)^\star = \{y\in\R^{n+1} \colon \langle x,y\rangle \leq 0,~\forall x\in \pos A\}
$$
is the dual cone of $\pos A$.
The following lemma allows to compute the width of a convex body in terms of the diameter of its polar and vice versa.

\begin{lemma}
\label{lemma:luneslines}
Let $u,v\in S^n$, $n\geq 2$, be two distinct points and let
\[
L_{uv} = \{ x\in S^n \colon \left<x,u\right> \leq 0,~\langle x,v\rangle \leq 0\}
\]
be the lune bounded by the hemispheres opposite to $u$ and $v$ respectively. Then, $L_{uv}^\circ = [u,v]$ and we have $d(u,v) = \pi- b(L_{uv})$.
\end{lemma}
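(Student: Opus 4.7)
The lemma packages two essentially independent statements, and my plan is to treat them separately with short, direct computations.

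For the identity $L_{uv}^\circ=[u,v]$, the natural route is via positive hulls. Using the relation $A^\circ=(\pos A)^\star\cap S^n$ recorded just before the lemma, I would first verify that $\pos L_{uv}$ coincides with the polyhedral cone $\{y\in\R^{n+1}\colon\langle y,u\rangle\leq 0,\,\langle y,v\rangle\leq 0\}$; one inclusion is immediate from the definition of $L_{uv}$, the other follows by normalising nonzero vectors. Standard duality for polyhedral cones (equivalently, a one-line application of Farkas) then identifies the dual with $\pos\{u,v\}$, and intersecting with $S^n$ gives the shorter great-circle arc between $u$ and $v$, i.e.\ the geodesic segment $[u,v]$. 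This last step requires $u$ and $v$ to be linearly independent, which holds because $u\neq\pm v$ (the case $u=-v$ is excluded by the non-antipodality condition built into the definition of a lune).

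For the breadth formula $d(u,v)=\pi-b(L_{uv})$, the plan is to identify the centres of the two bounding $(n-1)$-hemispheres of $L_{uv}$ and compute the angle between them. The boundary of $L_{uv}$ splits as $H_1=\{x\in S^n\colon\langle x,u\rangle=0,\,\langle x,v\rangle\leq 0\}$ together with the symmetric hemisphere $H_2$. The centre $c_1$ of $H_1$ is the point of $H_1$ farthest from its boundary great $(n-2)$-sphere, which is obtained by orthogonally projecting $-v$ onto $u^\perp$ and normalising; the symmetric recipe yields $c_2$. A short algebraic simplification using $|u|=|v|=1$ should collapse the numerator of $\langle c_1,c_2\rangle$ to give $\langle c_1,c_2\rangle=-\langle u,v\rangle$, and the identity then falls out of $\arccos(-t)=\pi-\arccos(t)$.

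Neither step presents a real obstacle; both amount to a translation of the spherical objects into Euclidean coordinates followed by a routine computation. The only point that deserves attention is the implicit hypothesis $u\neq -v$: without it, $L_{uv}$ degenerates into a great $(n-1)$-sphere rather than a proper lune, the vectors $c_1,c_2$ become ill-defined, and the geodesic arc $[u,v]$ is not unique. With that caveat handled, the two statements combine into exactly the polarity/breadth dictionary the paper will exploit later.
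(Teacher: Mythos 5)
Your proposal is correct. The first identity, $L_{uv}^\circ=[u,v]$, is proved exactly as in the paper: pass to positive hulls, observe that $\pos L_{uv}$ is the polyhedral cone $\{y\colon\langle y,u\rangle\le 0,\,\langle y,v\rangle\le 0\}=(\pos\{u,v\})^\star$, dualize, and intersect with $S^n$; your remark that $u\ne\pm v$ is needed for $[u,v]$ to be well defined is the same implicit hypothesis the paper relies on. For the breadth formula the two arguments diverge slightly. The paper argues synthetically: it shows the four points $u,v,u',v'$ (with $u',v'$ the hemisphere centers) lie on the great circle $V^\perp\cap S^n$, determines their cyclic order, and reads off $2\pi=d(u,v)+b(L_{uv})+\pi$. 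You instead write the centers explicitly, $c_1\propto -v+\langle u,v\rangle u$ and $c_2\propto -u+\langle u,v\rangle v$, and compute $\langle c_1,c_2\rangle=-\langle u,v\rangle$ directly (the computation does check out: the numerator is $-\langle u,v\rangle(1-\langle u,v\rangle^2)$ and the normalizing factors contribute $1-\langle u,v\rangle^2$). Your route is a touch more computational but has the advantage of bypassing the cyclic-order verification that the paper dismisses with ``it can be checked quickly''; the paper's route is coordinate-free but leaves that small step to the reader. Both are complete, and both reduce the problem to the two-plane spanned by $u$ and $v$.
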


\begin{proof}
We have $\pos L_{uv} = \{x\in\R^{n+1}\colon \langle x, u\rangle\leq 0,~\langle x,v\rangle\leq 0\}$. Hence, $(\pos L_{uv})^\star = \pos\{u,v\}$, which implies $L_{uv}^\circ = \pos\{u,v\}\cap S^n = [u,v]_{S^n}$ as desired.

For the second statement, let $u^\prime$ be the center of the $(n-1)$-dimensional hemisphere $H_u$ bounding $L_{uv}$, orthogonal to $u$. Likewise, let $v^\prime$ be the center of the $(n-1)$-dimensional hemisphere $H_v$ bounding $L_{uv}$ which is orthogonal to $v$. Consider the $(n-1)$-dimensional linear space $V\subset \R^{n+1}$ which is orthogonal to $u$ and $v$. We claim that $u^\prime,v^\prime\in V^\perp$, where $V^\perp$ denotes the orthogonal complement of $V$. 

To see this, let $S = S^n\cap V$. This is an $(n-2)$-dimensional subsphere of $S^n$ and it is clear that $S = \partial H_u=\partial H_v$. But since $u^\prime$ is the center of the $(n-1)$-hemisphere $H_u$, we have $d_{S^n}(u^\prime,x) = \tfrac\pi 2$, for all $x\in\partial H_u$. This is means that $u^\prime$ is orthogonal to any $x\in\partial H_u$, i.e., $u^\prime\in V^\perp$. For the same reason, we have $v^\prime \in V^\perp$.

So we have seen that $\{u,v,u^\prime,v^\prime\}$ are contained in the great circle $C=V^\perp\cap S^n$. Even more, it can be checked quickly that the cyclic order of the four points on $S$ is $(u,v,v^\prime,u^\prime)$. Therefore, and since $x$ is by construction orthogonal to $x^\prime$, $x\in\{u,v\}$, we have
\[
2\pi = d(u,v) +\frac{\pi}{2} + d(u^\prime,v^\prime) +\frac{\pi}{2} = d(u,v)+b(L_{uv}) +\pi,
\] 
which concludes the proof.
\end{proof}

In \cite{HW21} Han and Wu showed that the class of bodies of constant width is closed under polarity.

\begin{prop}[Han--Wu \cite{HW21}]
    \label{prop:constwidthpolar}
    Let $K\subsetneq S^n$ be a convex body. Then, $K$ is of constant width $w$, if and only if $K^\circ$ is of constant width $\pi -w$. 
\end{prop}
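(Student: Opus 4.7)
The plan is to use Lemma~\ref{lemma:luneslines} to rewrite the constant width condition as a farthest-point condition on the polar body, and then exploit the involution $(K^\circ)^\circ=K$. Since applying either direction to $K^\circ$ in place of $K$ yields the other, it suffices to prove: if $K$ has constant width $w$, then $K^\circ$ has constant width $\pi-w$.

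The first step is to derive, from Lemma~\ref{lemma:luneslines}, the identity
\[
w_{M}(H_u)\;=\;\pi-\max_{v\in M^\circ}d(u,v)
\]
for any convex body $M\subsetneq S^n$ and any $u\in\partial M^\circ$: a supporting lune of $M$ at $H_u$ must be of the form $L_{uv}$ for some $v\in\partial M^\circ$, and it has breadth $\pi-d(u,v)$. Taking $M=K$, constant width $w$ of $K$ translates to the statement that every $u\in\partial K^\circ$ admits a partner $v\in\partial K^\circ$ with $d(u,v)=\pi-w$. Taking $M=K^\circ$, constant width $\pi-w$ of $K^\circ$ translates to the statement that every $x\in\partial K$ admits $y\in\partial K$ with $d(x,y)=w$. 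The problem thus reduces to passing from the polar condition on $\partial K^\circ$ to its dual on $\partial K$.

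For this final step, given $x\in\partial K$ I would fix a supporting hyperplane $H_u$ of $K$ at $x$; the constant width hypothesis supplies $v\in\partial K^\circ$ with $d(u,v)=\pi-w$, making $L_{uv}$ a supporting lune of $K$ of breadth $w$ that meets $H_v$ at some $y\in\partial K$. The main obstacle is to show that $y$ can be chosen so that $d(x,y)=w$. The inequality $d(x,y)\leq w$ would follow from $\mathrm{diam}(K)\leq w$, which is not automatic on the sphere and must be argued separately; the matching equality comes from aligning $x$ and $y$ with the centers $u',v'$ of the two bounding hemispheres of $L_{uv}$, which satisfy $d(u',v')=w$ by the breadth computation in the proof of Lemma~\ref{lemma:luneslines}. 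Alternatively, one may appeal to the equivalence of our definition of constant width with the ``diametrically complete'' notions collected in~\cite{La20}, for which the polarity symmetry is transparent via the identity $w(K)+\mathrm{diam}(K^\circ)=\pi$ from Lemma~\ref{lemma:luneslines}.
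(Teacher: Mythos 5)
The paper itself does not prove this proposition; it is quoted from Han--Wu \cite{HW21}, so there is no in-paper argument to compare against. Your polarity translation is correct and is the right first move: combining Lemma~\ref{lemma:luneslines} with the observation that the supporting lunes of $K$ with a bounding hemisphere in $H_u$ are exactly the $L_{uv}$ with $v\in\partial K^\circ$ gives $w_K(H_u)=\pi-\max_{v\in K^\circ}d(u,v)$, hence ``$K$ has constant width $w$'' is equivalent to ``every $u\in\partial K^\circ$ has its farthest point in $K^\circ$ at distance exactly $\pi-w$,'' i.e.\ to $K^\circ$ having constant diameter $\pi-w$. Applying the same identity to $K^\circ$ shows that the proposition is equivalent to the statement that constant width and constant diameter coincide for spherical convex bodies --- which is precisely the content of Han and Wu's theorem. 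Your reduction therefore restates the result in an equivalent form; it does not yet prove it.

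The gap sits exactly where you flag it, and neither obstacle is resolved. First, $\diam(K)\leq w(K)$ for a constant-width body is, by your own identity, equivalent to $w_{K^\circ}(H_x)\geq\pi-w$ for all $x\in\partial K$, i.e.\ to one half of the desired conclusion; assuming it would be circular, and it genuinely needs an argument (for instance via Lassak's theorem that a lune realizing the minimal width touches $K$ at the centers of both of its bounding hemispheres). Second, for the matching equality: given an arbitrary $x\in\partial K$ and a supporting hyperplane $H_u$ at $x$, the minimal lune $L_{uv}$ does touch $K$ at the centers $u',v'$ of its bounding hemispheres with $d(u',v')=w$, but there is no reason that $u'=x$; if $H_u$ meets $K$ in more than one point, or touches it away from the center of the bounding hemisphere, the alignment argument tells you nothing about $x$. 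What is actually needed is that \emph{every} boundary point of a constant-width body is an endpoint of a diameter, and that is the substance of \cite{HW21}. Your proposed fallback --- invoking the equivalence with the diametric notions collected in \cite{La20} --- amounts to citing the proposition you are trying to prove.
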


Together with Theorem \ref{cor:lassak} one immediately obtains the following equivalence that will be crucial for the isominwidth inequality with $w(K)\geq \tfrac \pi 2$.

\begin{lemma}
\label{prop:duality}
    Let $K\subset S^n$ be a convex body and let $w\geq \tfrac \pi 2$. Then $K$ is reduced with $w(K)=w$, if and only if $K^\circ$ is of constant width with $w(K^\circ) = \pi - w$.
\end{lemma}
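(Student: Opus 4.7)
The plan is to derive both implications directly from the two preceding structural results: Theorem~\ref{cor:lassak}, which upgrades reducedness to constant width whenever $w\geq \tfrac{\pi}{2}$, and Proposition~\ref{prop:constwidthpolar}, the Han--Wu duality identifying the class of bodies of constant width $w$ with that of bodies of constant width $\pi-w$ via the polar operation. Once these are chained together, the equivalence is essentially formal.

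First I would handle the forward implication. Starting from a reduced body $K$ with $w(K)=w\geq \tfrac{\pi}{2}$, Theorem~\ref{cor:lassak} immediately promotes $K$ to a body of constant width $w$. Feeding this into Proposition~\ref{prop:constwidthpolar} yields that $K^\circ$ is of constant width $\pi-w$; in particular $w(K^\circ)=\pi-w$, which is what is claimed.

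For the converse, I would run the same chain in reverse. Suppose $K^\circ$ has constant width $\pi-w$. Applying Proposition~\ref{prop:constwidthpolar} to $K^\circ$ and using that polarity is involutive on convex bodies (so $(K^\circ)^\circ=K$), we conclude that $K$ is of constant width $\pi-(\pi-w)=w$. Since every body of constant width is reduced, as already observed in the paragraph preceding Theorem~\ref{cor:lassak}, this gives $K$ reduced with $w(K)=w$.

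Because the proof is only a two-step composition of previously established ingredients, I do not expect any genuine obstacle. The sole place the hypothesis $w\geq \tfrac{\pi}{2}$ is used is to license Theorem~\ref{cor:lassak} in the forward direction; in the reverse direction the constraint $\pi-w\leq \tfrac{\pi}{2}$ on the polar is automatic, and the implication ``constant width $\Rightarrow$ reduced'' requires no width assumption. Lemma~\ref{lemma:luneslines} plays no role here; it will enter separately in later sections whenever one needs to convert the duality between reduced bodies and bodies of constant width into quantitative information about widths and diameters.
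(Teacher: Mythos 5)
Your proposal is correct and follows exactly the paper's own argument: Theorem~\ref{cor:lassak} upgrades reducedness to constant width, Proposition~\ref{prop:constwidthpolar} transfers constant width across polarity (with the involutivity $(K^\circ)^\circ=K$ handling the converse), and the standard fact that constant width implies reduced closes the loop. No gaps.
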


\begin{proof}
    Suppose that $K$ is reduced with $w(K) = w$. Then $K$ is of constant width $w$ by Theorem \ref{cor:lassak}. According to Proposition \ref{prop:constwidthpolar}, $K^\circ$ is of constant width $\pi-w$.

    For the converse we assume that $K^\circ$ is of constant width $\pi-w$. Proposition \ref{prop:constwidthpolar} yields that $K$ is of constant width $w$. It follows easily that $K$ is reduced.
\end{proof}

\subsection{Inradius of spherical convex bodies}\label{subsec:balls}

For a convex body $K\subset S^n$ we its inradius $r(K)$ is defined as the largest radius $r>0$ such that there exists $p\in S^n$ with $B(p,r)\subseteq K$.
As in the Euclidean setting, the inradius is witnessed by a suitable set of contact points of the inscribed ball and $\partial K$. For a proof, see B\"or\"oczky, Sagmeister\cite{BoS22}.

\begin{lemma}\label{closest-points-on-the-boundary}
If $K\subseteq S^n$ is a convex body and $B$ is its inscribed ball, then for $2\leq k\leq n+1$ there are points $t_1,\ldots,t_k\in\partial K\cap\partial B$ such that $\left[t_1,\ldots,t_k\right]$ is a $\left(k-1\right)$-dimensional simplex and the center $p$ of $B$ is in the relative interior of $\left[t_1,\ldots,t_k\right]$.
\end{lemma}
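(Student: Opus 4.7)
The plan is to adapt the classical Euclidean separation argument for the incenter to the spherical setting, using the identification $\conv X = \pos X \cap S^n$ from Section \ref{secconstant}. Writing $T := \partial K \cap \partial B$ and $r := r(K)$, I first aim to show the central inclusion $p \in \conv T$, from which the claimed simplex is extracted by a standard Carathéodory-type reduction.

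For the inclusion $p \in \conv T$, I argue by contradiction. If $p \notin \conv T$, then $p$ lies outside the closed convex cone $\pos T \subseteq \R^{n+1}$, so linear separation together with the compactness of $T$ yields a unit vector $u$ with $\langle u, p\rangle > 0$ and $\sup_{t \in T}\langle u, t\rangle < 0$. Projecting $u$ orthogonally onto $p^\perp$ produces a non-zero direction $v$ tangent to $S^n$ at $p$ with $\langle v, t\rangle < 0$ for every $t \in T$. Non-vanishing of the projection follows from ruling out $u = p$, which is impossible because $\langle p, t\rangle = \cos r > 0$ whenever $r < \tfrac\pi 2$; and this inequality on $r$ is a consequence of $K$ being contained in an open hemisphere.

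Next, I move $p$ along the great circle in direction $v$, writing $p_\epsilon := \cos(\epsilon) p + \sin(\epsilon) v$, and show that for sufficiently small $\epsilon > 0$ one has $d(p_\epsilon, q) > r$ uniformly for all $q \in \partial K$. This produces an inscribed ball of radius slightly greater than $r$, contradicting the maximality of $r(K)$. On $\partial K$, the identity $\langle p_\epsilon, q\rangle = \cos(\epsilon)\langle p, q\rangle + \sin(\epsilon)\langle v, q\rangle$, combined with $\langle p, q\rangle \leq \cos r$, handles the contact points and a whole neighbourhood $U_\eta := \{q \in \partial K : d(p, q) < r + \eta\}$ of $T$, provided $\eta$ is small enough that $\langle v, \cdot\rangle$ remains bounded away from $0$ on $\overline{U_\eta}$ (which holds by continuity, since $\overline{U_\eta}$ shrinks to $T$ as $\eta \to 0$). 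On the complement of $U_\eta$ the triangle inequality $d(p_\epsilon, q) \geq d(p, q) - \epsilon$ suffices. Balancing $\eta$ and $\epsilon$ uniformly is the step I expect to require the most care.

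Once $p \in \conv T$ is known, I write $p = \sum_{i=1}^m \lambda_i t_i$ with $\lambda_i > 0$ and $t_i \in T$ and apply the standard Carathéodory reduction for cones: if the $t_i$ are linearly dependent, exploit a relation $\sum \mu_i t_i = 0$ to kill one positive coefficient without making any other negative, and iterate. The resulting family $\{t_1, \ldots, t_k\}$ is linearly independent with $k \leq n+1$, so $[t_1,\ldots,t_k]$ is a $(k-1)$-dimensional spherical simplex. Since all retained coefficients remain strictly positive, $p$ lies in its relative interior. The case $k = 1$ would force $p \in \partial K$ and contradict $r > 0$, so $k \geq 2$.
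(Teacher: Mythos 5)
The paper itself gives no proof of this lemma --- it is quoted from B\"or\"oczky--Sagmeister \cite{BoS22} --- so there is nothing in the text to compare against; your argument is the standard variational/separation proof for the incenter, transported to the sphere through the cone model, and its overall structure is sound. The contradiction scheme (grow the inradius by moving $p$ in a tangent direction $v$ with $\langle v,t\rangle<0$ on the contact set $T$), the two-regime estimate, and the conic Carath\'eodory reduction all work; in fact the balancing of $\eta$ and $\epsilon$ that you flag as delicate is painless: once $\eta$ is fixed so that $\langle v,\cdot\rangle\leq -c<0$ on $\overline{U_\eta}$, \emph{every} $\epsilon\in(0,\eta)$ works, because the pointwise strict inequality $d(p_\epsilon,q)>r$ on the compact set $\partial K$ already forces $\min_{q\in\partial K}d(p_\epsilon,q)>r$. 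The one step you state too quickly is the separation: strictly separating the point $p$ from the \emph{closed convex cone} generated by $T$ only yields $\langle u,t\rangle\leq 0$ on $T$ (the supremum of a linear functional over a cone containing $0$ is either $0$ or $+\infty$), whereas your neighbourhood argument genuinely needs $\max_{t\in T}\langle u,t\rangle<0$. This is easily repaired: since $\langle p,t\rangle=\cos r>0$ for all $t\in T$ (here $r<\tfrac{\pi}{2}$, as you note), the statement that $p$ is not a nonnegative combination of points of $T$ is equivalent to $\cos(r)\,p\notin\conv_{\R^{n+1}}T$, a \emph{compact} convex set; strictly separating these and then replacing $u$ by $u-\alpha p$ for a suitable $\alpha$ between $\max_{t\in T}\langle u,t\rangle/\cos r$ and $\langle u,p\rangle$ produces $\langle u,p\rangle>0$ together with $\langle u,t\rangle<0$ for all $t\in T$. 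With that adjustment made explicit, the proof is complete.
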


Using polarity, it is shown in Bezdek, Blekherman \cite{Bez00} that the regular triangle minimizes the inradius among convex bodies on $S^2$ of fixed width $w\leq\tfrac{2\pi}{3}$ (see Blaschke\cite{Bla15} for the Euclidean case). Here we give a direct proof for the case $w\leq \tfrac{\pi}{2}$ that will allow us to characterize regular triangles as the \emph{unique} minimizers of the inradius.

\begin{lemma}\label{Blaschke}
Let $K\subset S^2$ be a convex body of width $0<w\leq\frac{\pi}{2}$. Let $T_w\subset S^2$ be a regular triangle of width $w$. Then,
$$
r(T_w)\leq r(K)
$$
with equality if and only if $K$ is a congruent copy of $T_w$.
\end{lemma}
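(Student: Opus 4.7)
The plan is to apply Lemma \ref{closest-points-on-the-boundary} to the inscribed ball $B=B(p,r)$ of $K$, where $r=r(K)$, obtaining $k\in\{2,3\}$ contact points $t_1,\ldots,t_k\in\partial K\cap\partial B$ with $p$ in the relative interior of $[t_1,\ldots,t_k]$. The crucial observation is that since $B\subseteq K$ and each $t_i\in\partial K\cap\partial B$, any supporting great circle to $K$ at $t_i$ must coincide with the tangent great circle to $B$ at $t_i$; otherwise it would cut into $B$.

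For the case $k=2$ the points $t_1,p,t_2$ lie on a common great circle $\gamma$ with $d(t_1,t_2)=2r$, and the two tangent great circles at $t_1,t_2$ are perpendicular to $\gamma$ and hence bound a supporting lune $L$ of $K$ with $b(L)=d(t_1,t_2)=2r$. Therefore $w=w(K)\leq 2r$, so $r\geq w/2$. A direct spherical trigonometric computation of the inradius of $T_w$ yields $r(T_w)<w/2$ strictly for every $w\in(0,\tfrac{\pi}{2}]$, so this case actually gives the strict inequality $r(K)>r(T_w)$ and in particular no equality.

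For the case $k=3$ the tangent great circles to $B$ at $t_1,t_2,t_3$ are supporting great circles of $K$, and since $p$ lies in the interior of $[t_1,t_2,t_3]$ they bound a spherical triangle $T'\supseteq K$ whose inscribed ball is $B$. Monotonicity of the width under inclusion gives $w\leq w(T')$. The key step is then the auxiliary statement: \emph{among all spherical triangles circumscribed to the ball $B(p,r)$ with $r\leq\tfrac{\pi}{2}$, the regular triangle uniquely maximizes the width.} Granting this, and using that the width of a regular triangle is a strictly increasing function of its inradius, we obtain
\[
 w \;\leq\; w(T') \;\leq\; w(T_r^{\mathrm{reg}}),
\]
where $T_r^{\mathrm{reg}}$ denotes the regular triangle circumscribed to $B(p,r)$, and this rearranges to $r(T_w)\leq r=r(K)$.

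The main obstacle is proving the auxiliary statement on circumscribed triangles. My plan there is to parametrize such a triangle by the three angles $\alpha_1,\alpha_2,\alpha_3>0$ with $\alpha_1+\alpha_2+\alpha_3=2\pi$ that the radii $[p,t_i]$ make at $p$, express the three altitudes (which govern $w(T')$ for acute $T'$, the relevant regime when $r\leq\tfrac{\pi}{2}$) via right spherical triangle formulas as a symmetric expression in the $\alpha_i$, and verify by a convexity/Schur-type argument that $\min_i h_i$ is uniquely maximized at $\alpha_1=\alpha_2=\alpha_3=\tfrac{2\pi}{3}$. For the equality case of the lemma this forces $T'$ to be regular, and comparing inradii gives $T'=T_w$ up to congruence. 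Finally, $K\subseteq T_w$ with $w(K)=w(T_w)$ forces $K=T_w$, since a proper subset $K\subsetneq T_w$ with the same width would have strictly smaller area, contradicting Theorem \ref{isominwidth}.
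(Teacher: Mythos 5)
Your overall strategy coincides with the paper's: circumscribe a triangle $T'$ about $K$ via the tangent lines at the contact points of the incircle (Lemma \ref{closest-points-on-the-boundary}), and reduce everything to the claim that among triangles circumscribed about a fixed ball $B(p,r)$ the regular one uniquely maximizes the width. Your case $k=2$ is correct ($w(K)\leq 2r$ via the lune bounded by the two tangent lines at antipodal contact points, and $r(T_w)<\tfrac w2$), and the final rearrangement using monotonicity of the width of the regular circumscribed triangle in $r$ is sound. The problem is that the entire mathematical content of the lemma sits in your auxiliary statement, and you do not prove it — you only announce a plan. Moreover the plan has unaddressed difficulties: (a) identifying $w(T')$ with $\min_i h_i$ over the altitudes presupposes that the minimal supporting lune of $T'$ is bounded by a line containing a side with the opposite vertex projecting into that side; a triangle circumscribed about $B(p,r)$ can be arbitrarily elongated and obtuse, so this identification itself needs an argument; (b) the asserted Schur-type maximization of $\min_i h_i$ at $\alpha_1=\alpha_2=\alpha_3=\tfrac{2\pi}{3}$ is exactly the nontrivial computation and is nowhere carried out. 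The paper closes precisely this gap by an explicit construction: it takes the contact point $t_1$ opposite the \emph{minimal} central angle $\alpha=\angle(t_2,p,t_3)\leq\tfrac{2\pi}{3}$, builds the lune $L$ of breadth $w$ based at the tangent line $\ell_1$, and shows $T'\subset L$ with $d(p,v_1)\leq d(p,q_1)$ strict unless $\alpha=\tfrac{2\pi}{3}$; this yields both $w(T')\leq w$ and the rigidity statement in one step. Until you supply an argument of that kind, your proof is incomplete at its central point.

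A secondary remark on the equality case: you invoke Theorem \ref{isominwidth} to conclude $K=T_w$ from $K\subseteq T_w$ and $w(K)=w(T_w)$. Within this paper, Lemma \ref{Blaschke} is an ingredient of the proof of Theorem \ref{isominwidth} (via \eqref{eq:pal_estimates}), so this is logically delicate — it survives only because you use the bare inequality of Theorem \ref{isominwidth} and not its equality case, which is itself deduced from the present lemma. The paper avoids the issue entirely by using that $T_w$ is reduced for $w\leq\tfrac\pi2$, so that $K\subsetneq T_w$ would force $w(K)<w(T_w)$; you should do the same.
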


\begin{proof}
    Suppose that $r(K)\leq r(T_w)$. Let $B$ be the incircle of $K$. Then there exist three (not necessarily distinct) contact points $t_1,t_2,t_3\in \partial K \cap \partial B$ such that $p\in [t_1,t_2,t_3]$ (cf.\ Lemma \ref{closest-points-on-the-boundary}), where $p$ denotes the center of $B$. Let $\ell_i$ be the tangent line to $K$ (and thus to $B$) at $t_i$. Let $T$ be the triangle bounded by the lines $\ell_i$ that contains $K$. Then, $w(T)\geq w(K) = w$ and $r(T) \leq r(K)\leq r(T_w)$. 

    It suffices to show that $T$ is congruent with a regular triangle $T_w$ of width $w$. Indeed, since $w\leq \tfrac \pi 2$, the triangle $T_w$ is reduced. Since $K$ has the same width as $T_w$, it follows that $K=T$ as desired.

    In order to show that $T$ is congruent with $T_w$, we consider the minimal angle $\alpha = \angle(t_i,p,t_j)$. Without loss of generality, we assume that $\alpha$ is attained for $\{i,j\} = \{2,3\}$.
    Clearly, we have $\alpha \leq \tfrac{2\pi}{3}$ and equality holds if and only $T$ is a regular triangle.
    
    Since $\alpha$ is minimal and since $p$ is contained in $[t_1,t_2,t_3]$ all the remaining angles of the form $\angle(t_i,p,t_j)$, $\{i,j\}\neq\{1,2\}$, are at least $\tfrac{\pi}{2}$.
    Hence, the vertex $v_1$ is contained in the half space $\ell_0^+$ defined by the line $\ell_0$ which intersects $[t_1,p]$ orthogonally and $\ell_0^+$ does not contain $t_1$.

    Let $q_1$ be the point of distance $w$ to $t_1$ such that $p\in [t_1,q_1]$. Let $L$ be the lune of breadth $w$ such that one of its bounding 1-hemispheres is contained in $\ell_1$ and such that $t_1$ and $q_1$ are the centers of its two bounding 1-hemispheres. Since the distance of $v_1$ to $p$ is strictly increasing in $\alpha$, we deduce that $d(p_1,v_1)$ is maximal, if and only if $\alpha = \tfrac{2\pi} 3$ in which case $T$ is a regular triangle with $r(T) \leq r(T_w)$. Since $w(T) \geq w$ it we see that $T$ has width $w$ if it is regular. So we obtain $d(p,v_1) \leq d(p,q_1)$ with equality if and only if $T$ is regular. Hence $v_1\in \ell_0^+ \cap B(p,w-r(K))\subset L$ and therefore $T\subset L$. We obtain
    \[
        w\leq w(T) \leq w_{\ell_1}(T) \leq b(L) = w.
    \]
    If $d(p,v_1) < d(p,q_1)$, $L$ would not be a supporting lune of $T$, so the last inequality would be strict, yielding a contradiction. Hence, $d(p,v_1)=d(p,q_1)$ which implies that $T$ is a regular triangle of width $w$ as desired.
\end{proof}

As we will see in Section \ref{sec:pal:largew}, the assumption $w\leq \tfrac{\pi}{2}$ is necessary to characterize the equality cases. While the inequality of Bezdek and Blekherman also holds if $w\in (\tfrac{\pi}{2},\tfrac{2\pi}{3}]$, there are more equality cases than the regular triangles in this setting.

\section{P\'al's problem for \boldmath$w\leq\frac{\pi}{2}$}
\label{sec:pal:smallw}

\subsection{The spherical isominwidth inequality for $w\leq\frac{\pi}{2}$}\label{subsec:pal:smallw}

In this section we recall the proof of the spherical isominwidth inequality Theorem \ref{isominwidth} due to Bezdek and Blekherman in order to obtain a blueprint for the proof of our stability result Theorem \ref{stab:thm}. We will also explain how the equality case can be derived from their argument.

For a convex body $K\subset S^2$ of width $w\in (0,\tfrac \pi 2]$ we have that $r(T_w) \leq r(K) \leq \tfrac w2$, where again $T_w$ denotes a regular triangle of width $w$ (cf.\ Lemma \ref{Blaschke}) and the upper bound trivially follows from the fact that the width of a ball is twice its radius. The first step in the proof of the spherical isominwidth inequality in Bezdek, Blekherman \cite{Bez00} is the following proposition.

\begin{prop}
    \label{prop:capdomain}
    Let $K\subset S^2$ be a convex body of width $w\in (0,\tfrac \pi 2]$. Let $B\subseteq K$ be its incircle. Then there exists $q_1,q_2,q_3\in K$ such that $d(p,q_i) = w-r$ and the caps $\big(\conv(B\cup \{q_i\})\big)\setminus B$ have disjoint interiors.
\end{prop}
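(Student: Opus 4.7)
The plan is to apply Lemma~\ref{closest-points-on-the-boundary} to fix three contact points whose convex hull contains $p$ in its interior, and then to use the width hypothesis to produce three apices $q_i\in K$ at distance $w-r$ from $p$ whose associated caps have disjoint interiors. The main tools are the lune--segment duality from Lemma~\ref{lemma:luneslines} and the inradius bound of Lemma~\ref{Blaschke}.

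First, by Lemma~\ref{closest-points-on-the-boundary} (with $k=3$), I choose contact points $t_1,t_2,t_3\in\partial B\cap\partial K$ with $p\in\operatorname{relint}[t_1,t_2,t_3]$. For each $i$, let $\ell_i$ denote the great circle tangent to $B$ at $t_i$; this is also a supporting great circle of $K$. Since $w(K)=w$, every supporting lune of $K$ whose bounding arc lies in $\ell_i$ has breadth at least $w$. I choose such a supporting lune and let $y_i\in\partial K$ be a tangency point of its far bounding arc with $K$; a breadth estimate together with the fact that $p$ lies at distance $r$ from $\ell_i$ yields $d(p,y_i)\geq w-r$. By convexity of $K$ and since $p,y_i\in K$, the point $q_i$ on the geodesic $[p,y_i]$ with $d(p,q_i)=w-r$ lies in $K$, providing the three candidate apices.

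Next, each cap $C_i=\conv(B\cup\{q_i\})\setminus B$ is bounded by the two geodesics tangent from $q_i$ to $B$, and subtends at $p$ an angle $2\beta$, where $\cos\beta=\tan r/\tan(w-r)$ by Napier's rules applied to the right spherical triangle with legs $r$ and $w-r$ and right angle at the tangent point. Two caps are interior-disjoint off $B$ exactly when the directions $\psi_i$ from $p$ to the $q_i$ are pairwise $2\beta$-separated at $p$. A short computation using $R+r=w$ for the spherical equilateral triangle (where $R$ is the circumradius) together with a Napier identity shows $\tan r(T_w)/\tan(w-r(T_w))=1/2$; since $\tan r/\tan(w-r)$ is strictly increasing in $r$ for fixed $w\in(0,\pi/2]$, Lemma~\ref{Blaschke} forces $\cos\beta\geq 1/2$, i.e.\ $\beta\leq\pi/3$. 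Thus three mutually $2\beta$-separated directions around $p$ are geometrically admissible, and it remains to realize them by a suitable choice of the $y_i$'s.

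The main obstacle is this last simultaneous realization. The direction $\psi_i$ depends on $K$ through $y_i$, and while each individual $q_i$ exists, coordinating all three to achieve pairwise $2\beta$-separation at $p$ requires exploiting the freedom in choosing the spine of the supporting lune along each $\ell_i$ (which shifts $y_i$ along the far arc and hence moves $\psi_i$), combined with the relative interior condition $p\in\operatorname{relint}[t_1,t_2,t_3]$ that compels the $\psi_i$'s to spread around $p$. The equilateral case $K=T_w$ is tight, with $\beta=\pi/3$ exactly and the $\psi_i$'s forced to be equally spaced at $2\pi/3$ around $p$.
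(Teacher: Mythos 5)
The paper does not actually prove Proposition~\ref{prop:capdomain}; it is imported verbatim from Bezdek--Blekherman \cite{Bez00}, so there is no in-paper argument to compare against and your attempt has to stand on its own. Your setup is sound: the choice of contact points via Lemma~\ref{closest-points-on-the-boundary}, the existence of $q_i\in K$ with $d(p,q_i)=w-r$ from the width over each tangent line $\ell_i$, the half-angle formula $\cos\beta=\tan r/\tan(w-r)$, the value $1/2$ for the regular triangle, and the monotonicity in $r$ giving $\beta\leq\pi/3$ via Lemma~\ref{Blaschke} (which is proved independently in the paper, so no circularity) are all correct. The reduction of disjointness to pairwise $2\beta$-separation of the directions $\psi_i$ at $p$ is also correct.

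However, the proof is not complete, and the missing step is precisely the content of the proposition. The inequality $\beta\leq\pi/3$ only says $3\cdot 2\beta\leq 2\pi$, i.e.\ that three disjoint caps are not \emph{a priori} impossible; it does not show that the three caps you actually constructed are disjoint. Your last paragraph concedes this and replaces the argument with a plan. The information you have extracted about $\psi_i$ is too weak to carry it out: the condition $d(y_i,\ell_i)\geq w$ together with $d(p,\ell_i)=r$ only forces $\angle(\psi_i,\tau_i)>\pi/2$, where $\tau_i$ is the direction of $t_i$ from $p$ (and the bound degrades toward $\pi/2$ as $d(p,y_i)$ grows beyond $w-r$). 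If two contact directions $\tau_i,\tau_j$ happen to be close together --- which $p\in\operatorname{relint}[t_1,t_2,t_3]$ does not preclude --- the admissible ranges for $\psi_i$ and $\psi_j$ overlap heavily and nothing so far prevents $\angle(\psi_i,p,\psi_j)<2\beta$. What saves the day in that regime is that poorly spread contact points force $r$ close to $w/2$ and hence $\beta$ close to $0$, so a genuine quantitative trade-off between the angular spread of $\{t_1,t_2,t_3\}$, the inradius, and the cap angle must be established (or, alternatively, one shows each cap lies in a distinct component of $T\setminus B$, where $T$ is the tangent triangle). Until one of these arguments is supplied, the disjointness claim is unproven.
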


This proposition yields a so-called cap domain
\begin{equation}
    \label{eq:capdomaindef}
    C = \conv(B\cup\{q_1,q_2,q_3\})\subseteq K.
\end{equation}
If $r(K) = \tfrac w2$, $C$ degenerates to $B$. It is enough to lower bound the area of $C$. We write $C_i$ for the cap $\big(\conv(B\cup \{q_i\})\big)\setminus B$. Note that all the three caps are congruent. Since they are non-overlapping, one can replace the apexes $q_i$ by new points $\widetilde{q}_i$ on $\partial B(p,w-r)$ such that $\widetilde{q}_1 = q_1$ and $\angle(\widetilde{q}_i,p,\widetilde{q}_1) = \tfrac{2\pi}{3}$ and $\{q_1,q_2,q_3\}$ is in the same clockwise order around $p$ than $\{\widetilde{q}_1, \widetilde{q}_2,\widetilde{q}_3\}$. The resulting regularized cap domain
\begin{equation}
    \label{eq:regcapdomaindef}
    C_{\mathrm{reg}} = \conv(B\cup\{\widetilde{q}_1, \widetilde{q}_2,\widetilde{q}_3\})
\end{equation}
has non-overlapping caps. Hence, $\area(C_{\mathrm{reg}}) = \area(C)$. Note that $C_{\mathrm{reg}}$ depends solely on $w$, which is fixed, and $r$, which varies in $[r(T_w), \tfrac w2]$. 

Bezdek and Blekherman finish their proof of the spherical isominwidth inequality by showing that $\area(C_{\mathrm {reg}}) \geq \area(T_w)$ via a geometric argument. For the purpose of our stability result and, in particular, for the characterization of the equality case, we prove the following lemma.

\begin{lemma}
    \label{lemma:hexagon_formular}
    Let $w\in (0,\tfrac \pi 2]$ and $p\in S^2$ be fixed. For $\eta\in [0,\tfrac w2-r(T_w)]$, let $C_{\mathrm{reg}}(\eta)$ denote the convex hull of $B(p,r(T_w) + \eta)$ and three points $\widetilde{q}_i\in \partial B(p, w-r(T_w)-\eta)$ such that $\angle (\widetilde{q}_i, p,\widetilde{q}_j) = \tfrac{2\pi}{3}$, $i,j\in\{1,2,3\}$, $i\neq j$. Then, $\area(C_{\mathrm{reg}})$ is lower bounded by
    \[
F\left(\eta\right)=12\arccot \left( \frac{\cot\tfrac {w-r(T_w)-\eta}{2}\cot\tfrac {r(T_w)+\eta}{2} + \tfrac 12}{\tfrac{\sqrt 3}{2}}\right).
    \]
    which is strictly monotonously increasing in $\eta\in [0,\tfrac w2-r(T_w)]$ and satsifies $F(0) = \area(T_w)$.
\end{lemma}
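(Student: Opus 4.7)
My plan is to bound $\area(C_{\mathrm{reg}}(\eta))$ from below by the area of an inscribed spherical hexagon whose area can be computed in closed form. Write $\rho := r(T_w) + \eta$ and $R := w - \rho$, and for each $i \in \{1,2,3\}$ let $m_i \in \partial B(p,\rho)$ be the point on the short arc between $\widetilde q_i$ and $\widetilde q_{i+1}$ satisfying $\angle(\widetilde q_i, p, m_i) = \tfrac{\pi}{3}$. Let $H(\eta)$ be the spherical hexagon with consecutive vertices $\widetilde q_1, m_1, \widetilde q_2, m_2, \widetilde q_3, m_3$. Since each $m_i \in \partial B(p,\rho) \subseteq C_{\mathrm{reg}}(\eta)$ and each $\widetilde q_i \in C_{\mathrm{reg}}(\eta)$, convexity of $C_{\mathrm{reg}}(\eta)$ yields $H(\eta) \subseteq C_{\mathrm{reg}}(\eta)$, and thus $\area(C_{\mathrm{reg}}(\eta)) \geq \area(H(\eta))$.

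By the $D_3$-symmetry of $H(\eta)$, drawing the six geodesics from $p$ to its vertices decomposes $H(\eta)$ into six congruent spherical triangles, each with sides $R$ and $\rho$ meeting at $p$ under an angle of $\tfrac{\pi}{3}$. Denoting the other two angles of such a triangle by $\delta_1$ (opposite $\rho$) and $\delta_2$ (opposite $R$), Napier's analogy yields
\[
\tan\left(\tfrac{\delta_1 + \delta_2}{2}\right) = \frac{\cos((R-\rho)/2)}{\cos((R+\rho)/2)}\cot\tfrac{\pi}{6} = \frac{\sqrt 3 \cos((R-\rho)/2)}{\cos(w/2)}.
\]
Each triangle has spherical excess $\delta_1 + \delta_2 - \tfrac{2\pi}{3}$, so summing the six excesses gives
\[
\area(H(\eta)) = 12\arctan\left(\frac{\sqrt 3\cos((R-\rho)/2)}{\cos(w/2)}\right) - 4\pi.
\]
To match the $\arccot$-form of the lemma, set $x := \sqrt 3 \cos((R-\rho)/2)/\cos(w/2)$ and $y := (2\cot(R/2)\cot(\rho/2) + 1)/\sqrt 3$. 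Using the product-to-sum identity $\cot(R/2)\cot(\rho/2) = (\cos((R-\rho)/2) + \cos(w/2))/(\cos((R-\rho)/2) - \cos(w/2))$, a short computation yields $xy - 1 = \sqrt 3(x + y)$ with $x, y > 0$ and $xy > 1$, hence $\arctan x + \arctan y = \tfrac{5\pi}{6}$ and therefore $12\arctan(x) - 4\pi = 12\arccot(y) = F(\eta)$.

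Strict monotonicity of $F$ is immediate from the $\arctan$-form: as $\eta$ grows over $[0, w/2 - r(T_w)]$, the quantity $(R-\rho)/2 = (w - 2r(T_w))/2 - \eta$ strictly decreases from $(w - 2r(T_w))/2 \in (0,\pi/4)$ down to $0$, so $\cos((R-\rho)/2)$ strictly increases and so does $F$. Finally, $F(0) = \area(T_w)$ follows from the fact that the incircle of $T_w$ is tangent to each side at its midpoint, so at $\eta = 0$ each $m_i$ coincides with the midpoint of a side of $T_w$; the hexagon $H(0)$ then retraces $\partial T_w$ (with extra vertices at the midpoints), giving $H(0) = T_w$. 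The main obstacle is finding the inscribed hexagon $H(\eta)$ as the correct reference region and spotting the trigonometric identity $xy - 1 = \sqrt 3(x + y)$ that bridges the natural expression for $\area(H(\eta))$ and the $\arccot$-form stated in the lemma.
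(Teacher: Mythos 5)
Your proof is correct and geometrically identical to the paper's: your hexagon $H(\eta)$ is exactly the hexagon $Q(\eta)$ used there (your midpoints $m_i$ coincide with the points $\widetilde t_j$ antipodal to the $\widetilde q_j$ on the inner circle), and both arguments triangulate it into six congruent triangles with sides $R$ and $\rho$ enclosing an angle $\tfrac\pi3$ at $p$. The only divergence is computational: the paper evaluates the area of one such triangle directly via the half-area cotangent formula of Lemma \ref{TriangleArea}, which yields the stated $\arccot$ expression at once, whereas you pass through Napier's analogy and the spherical excess and then need the identity $xy-1=\sqrt3(x+y)$ to reconcile the $\arctan$ form with the $\arccot$ form --- correct, and I checked the identity, but an avoidable detour. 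On the other hand, your monotonicity argument (reading off that $F$ increases because $\cos\bigl(\tfrac{R-\rho}{2}\bigr)$ does as $\eta$ grows) is a little cleaner than the paper's differentiation of $\cot\tfrac R2\cot\tfrac\rho2$, and your treatment of $F(0)=\area(T_w)$ matches the paper's.
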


This lemma yields for a convex body $K$ of width $w\in (0,\tfrac \pi 2]$ that 
\begin{equation}
\label{eq:pal_estimates}
\area(K) \geq \area(C_{\mathrm{reg}}(r(K)-r(T_w))) = F(r(K)-r(T_w)) \geq F(0) = \area(T_w)
\end{equation}
with equality if and only if $r(K)-r(T_w) = 0$ which is equivalent to $K$ being congruent with $T_w$ (cf.\ Lemma \ref{Blaschke}).

The proof of the lemma requires the following area formula for spherical triangles which follows from Napier's rule by a routine computation.

\begin{lemma}\label{TriangleArea}
Let $a,\,b,\,c$ be the side lengths of an arbitrary triangle $T$ in $S^2$, and let $\alpha,\,\beta,\,\gamma$ be the angles opposite these sides. Then, for the area of the triangle, we have
$$
\cot\left(\frac{\area\left(T\right)}{2}\right)=\frac{\cot\frac{a}{2} \cot\frac{b}{2}+\cos\gamma}{\sin\gamma}.
$$
\end{lemma}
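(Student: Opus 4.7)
The strategy is to translate the area into an angle sum via Girard's theorem $\area(T) = \alpha+\beta+\gamma-\pi$, so that the claim becomes a purely trigonometric identity relating the angle $\gamma$ and the sum $\alpha+\beta$ to the sides $a,b$. Writing $\area(T)/2 = (\alpha+\beta)/2 + \gamma/2 - \pi/2$ and using the elementary identity $\cot(x - \pi/2) = -\tan(x)$, the lemma is equivalent to
\[
-\tan\!\left(\tfrac{\alpha+\beta}{2}+\tfrac{\gamma}{2}\right) \;=\; \frac{\cot\tfrac{a}{2}\cot\tfrac{b}{2}+\cos\gamma}{\sin\gamma}.
\]

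To handle the left-hand side, I would invoke Napier's analogy for oblique spherical triangles (which is the content hinted at by ``Napier's rule''):
\[
\tan\!\left(\tfrac{\alpha+\beta}{2}\right) \;=\; \frac{\cos\tfrac{a-b}{2}}{\cos\tfrac{a+b}{2}}\,\cot\tfrac{\gamma}{2}.
\]
Expanding $\tan\!\bigl((\alpha+\beta)/2+\gamma/2\bigr)$ with the tangent addition formula gives a ratio with denominator $1 - \tan\tfrac{\alpha+\beta}{2}\tan\tfrac{\gamma}{2}$, which by Napier's analogy equals $1 - \cos\tfrac{a-b}{2}/\cos\tfrac{a+b}{2}$. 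The sum-to-product identity $\cos\tfrac{a-b}{2} - \cos\tfrac{a+b}{2} = 2\sin(a/2)\sin(b/2)$ then simplifies this to $-2\sin(a/2)\sin(b/2)/\cos\tfrac{a+b}{2}$. The numerator, after clearing fractions, becomes $\cos\tfrac{a-b}{2}\cos^2(\gamma/2) + \cos\tfrac{a+b}{2}\sin^2(\gamma/2)$ divided by the same $\cos\tfrac{a+b}{2}$; applying the half-angle formulas $\cos^2(\gamma/2) = (1+\cos\gamma)/2$, $\sin^2(\gamma/2) = (1-\cos\gamma)/2$ and then the companion identity $\cos\tfrac{a-b}{2} + \cos\tfrac{a+b}{2} = 2\cos(a/2)\cos(b/2)$ reduces it to $\cos(a/2)\cos(b/2) + \sin(a/2)\sin(b/2)\cos\gamma$. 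Dividing, the factors $\cos\tfrac{a+b}{2}$ cancel, and what remains is exactly $-\bigl(\cot(a/2)\cot(b/2) + \cos\gamma\bigr)/\sin\gamma$, so the extra minus sign on the left-hand side produces the claimed formula.

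The main obstacle is entirely bookkeeping: no conceptual difficulty arises once Napier's analogy is brought in, but one must track the signs carefully through the tangent addition formula and apply the half-angle and sum-to-product identities consistently. As the statement of the lemma already flags, the derivation is a routine computation once the correct substitution has been made.
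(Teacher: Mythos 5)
Your proposal is correct and follows exactly the route the paper indicates (the paper gives no written proof, only the remark that the lemma ``follows from Napier's rule by a routine computation''): Girard's theorem converts the area to the spherical excess, and Napier's analogy $\tan\frac{\alpha+\beta}{2}=\frac{\cos\frac{a-b}{2}}{\cos\frac{a+b}{2}}\cot\frac{\gamma}{2}$ combined with the tangent addition and sum-to-product identities yields the stated formula. I checked the sign bookkeeping and the simplification to $\cos\frac a2\cos\frac b2+\sin\frac a2\sin\frac b2\cos\gamma$ over $-\sin\frac a2\sin\frac b2\sin\gamma$; it is all consistent.
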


\begin{proof}[Proof of Lemma \ref{lemma:hexagon_formular}]
For $\eta\in [0,\tfrac w2-r(T_w)]$, let $B = B(p,r(T_w)+\eta)$. Let $\widetilde{t}_i$ be the intersection point of the line through $\widetilde{q}_i$ and $p$ and the circle $\partial B$, such that $p\in [\widetilde{t_i},\widetilde{q_i}]$. Consider the hexagon
\[
Q\left(\eta\right)=\conv\left(\left\{\widetilde{q}_1,\widetilde{q}_2,\widetilde{q}_3,\widetilde{t}_1,\widetilde{t}_2,\widetilde{t}_3\right\}\right) \subseteq C_{reg}(\eta)
\]
and choose $F(\eta) = \area(Q(\eta))$. From the inclusion, we readily see that $\area(C_{\mathrm{reg}}) \geq F(\eta)$. We can triangulate $Q(\eta)$ with six triangles of the form
\[
T_{ij} = [p,\widetilde{t}_i,\widetilde{q}_j],\quad i\neq j.
\]
For all choices of $i$ and $j$, the triangles $T_{ij}$ are congruent with one another. So we have
\begin{equation}
\label{eq:hex_est}
F(\eta) = \area(Q(\eta)) = 6\area(T_{12}).
\end{equation}
Since $d(p,\widetilde{t}_1) = r(T_w)+\eta$ and $d(p,\widetilde{q}_2) = w-r(T_w)-\eta$, as well as $\angle(\widetilde{t}_1,p,\widetilde{q}_2) = \frac \pi 3$, we obtain from Lemma \ref{TriangleArea}:
\begin{equation}
\label{eq:fs2}
\cot\left(\frac{\area\left(T_{12}\right)}{2}\right)=\frac{\cot\frac{w-r(T_w)-\eta}{2} \cot\frac{r(T_w)+\eta}{2}+\frac{1}{2}}{\frac{\sqrt{3}}{2}}.
\end{equation}
Together with \eqref{eq:hex_est} this verifies the explicit expression for $F(\eta)$ that is claimed in the lemma.

Next, we show that $F(\eta)$ is strictly increasing. Since, clearly, $\area(T_{12}) \in (0,2\pi)$ it suffices to show that 
\[
f\left(\eta\right)=\cot\frac{w-r(T_w)-\eta}{2} \cot\frac{r(T_w)+\eta}{2},
\]
is strictly decreasing, because the cotangens function is strictly decreasing on $(0,\pi)$.
Computing the derivative of $f(\eta)$ yields
\begin{equation}
\label{eq:derif}
f'\left(\eta\right)=\frac{\sin\left(r+\eta\right)-\sin\left(w-r-\eta\right)}{4\sin^2\left(\frac{r+\eta}{2}\right)\sin^2\left(\frac{w-r-\eta}{2}\right)}<0,
\end{equation}
since $0<r+\eta<w-r-\eta<\frac{\pi}{2}$.
In order to see that $F(0)=0$ we observe that the hexagon $Q(\eta)$ degenerates to a regular triangle with inradius $r(T_w)$ if $\eta=0$.
\end{proof}

\subsection{Stability of the spherical isominwidth problem for \boldmath$w\leq\frac{\pi}{2}$}\label{subsec:stab:smallw}

We prove the following variant of Theorem \ref{stab:thm} for small values $\varepsilon$. The actual Theorem \ref{stab:thm} follows trivially from the theorem below, since the Hausdorff distance of two spherical convex bodies is never more than $\pi$. So we can simply replace the constant $\cc_w$ in Theorem \ref{stab:thm}a by $\max\{\cc_w, \pi/\varepsilon_w\}$.

\vspace{6pt}\noindent{\bf Theorem~\ref{stab:thm}a. }\emph{
  Let $T_w\subset S^2$ denote a regular triangle of width $0<w\leq\frac{\pi}{2}$. There exists constants $\cc_w,\varepsilon_w>0$ depending only on $w$ such that for any $\varepsilon\in [0,\varepsilon_w)$ and any convex body $K\subset S^2$ with $w(K) = w$ and
    $\area(K)\leq\area(T_w) + \varepsilon$ we have $\delta(K,T)\leq\cc_w\varepsilon$ for a certain regular triangle $T$ of width $w$.
}

\medskip
In the following, we will use the variables $\cc_w $ and $\varepsilon_w$ as placeholders for positive constants whose value depend only on $w$. The exact value of these constants may vary at each appearance. Nonetheless, we occasionally use variants such as $\cc_w^\prime$ to highlight the different values.

As a first step, we want to control the inradius of convex bodies $K$ whose area is close to the area of the regular triangle of the same width.

\begin{lemma}
    \label{stab:inradius}
    Let $T_w\subset S^2$ denote a regular triangle of width $0<w\leq \tfrac \pi 2$. There exists a constant $\cc_w>0$ depending only on $w$ such that for any convex body $K\subset S^2$ with $w(K)=w$ and $\area(K)\leq \area(T_w)+\varepsilon$ we have $r(K) \leq r(T_w) +\cc_w\varepsilon$.
\end{lemma}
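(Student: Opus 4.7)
The plan is to combine the pointwise lower bound $\area(K) \ge F(r(K) - r(T_w))$, obtained from Proposition \ref{prop:capdomain} and Lemma \ref{lemma:hexagon_formular}, with a positive lower bound on $F'$ near zero, and then invert via the mean value theorem. Setting $\eta_K := r(K) - r(T_w) \in [0, \tfrac{w}{2} - r(T_w)]$ (using Lemma \ref{Blaschke} and the trivial inequality $r(K) \le w(K)/2$), the area hypothesis reads $F(\eta_K) \le F(0) + \varepsilon$.

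The next step is to exhibit constants $\eta_w \in (0, \tfrac{w}{2} - r(T_w))$ and $c_w > 0$ depending only on $w$ with $F'(\eta) \ge c_w$ on $[0, \eta_w]$. From the explicit representation in Lemma \ref{lemma:hexagon_formular} one computes
$$F'(\eta) = -\frac{24}{\sqrt{3}}\cdot\frac{f'(\eta)}{1 + g(\eta)^2}, \qquad g(\eta) := \frac{f(\eta) + 1/2}{\sqrt{3}/2};$$
by \eqref{eq:derif} the quantity $-f'(\eta)$ is continuous and strictly positive on $[0, \tfrac{w}{2} - r(T_w))$, while the denominator $1 + g(\eta)^2$ is continuous and bounded above. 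Hence $F'$ is continuous and strictly positive on this half-open interval, so continuity yields the claimed uniform positive lower bound on any compact subinterval $[0, \eta_w]$.

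Set $\varepsilon_w := F(\eta_w) - F(0) > 0$. If $\varepsilon \le \varepsilon_w$, the monotonicity of $F$ forces $\eta_K \le \eta_w$, and the mean value theorem on $[0, \eta_K]$ gives $\varepsilon \ge F(\eta_K) - F(0) \ge c_w\eta_K$, so $\eta_K \le \varepsilon/c_w$. If $\varepsilon > \varepsilon_w$, the trivial bound $\eta_K \le \tfrac{w}{2} - r(T_w) \le (\tfrac{w}{2} - r(T_w))\varepsilon/\varepsilon_w$ applies. Taking $\cc_w := \max\{c_w^{-1},\,(\tfrac{w}{2} - r(T_w))/\varepsilon_w\}$ concludes the proof.

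The only subtlety is that $F'$ vanishes at the right endpoint $\eta = \tfrac{w}{2} - r(T_w)$ (where the regularized cap domain degenerates to a disc), preventing a single Lipschitz bound for $F^{-1}$ on the full interval. This is sidestepped by first restricting to $[0, \eta_w]$, which is feasible precisely because the hypothesis with small $\varepsilon$ forces $\eta_K$ to stay in that interval by monotonicity of $F$.
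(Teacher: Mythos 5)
Your proof is correct and follows essentially the same route as the paper: both reduce the claim to the lower bound $\area(K)\ge F(r(K)-r(T_w))$ from Lemma \ref{lemma:hexagon_formular} and then invert $F$ using that $F$ is strictly increasing with $F'(0)>0$. The paper packages the inversion into the auxiliary Lemma \ref{stab:taylor}, whose proof is precisely your two-case argument (mean value theorem near $0$, a trivial bound on the rest of the compact interval), so the only difference is that you inline that lemma and compute $F'$ explicitly.
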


For the proof, we need the following elementary lemma.

\begin{lemma}
    \label{stab:taylor}
    Let $f\colon [a,b]\to \R$ be a continuously differentiable function that attains its unique minimum at $a$. If $f^\prime(a) >0$, there exists a constant $c>0$ such that $|x-a|\leq c |f(x)-f(a)|$, for all $x\in [a,b]$.
\end{lemma}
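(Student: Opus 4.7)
The plan is a standard Taylor-plus-compactness argument splitting the interval into a small neighbourhood of $a$ where the derivative controls $f$, and the complement where compactness gives a uniform positive lower bound on $f-f(a)$.

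First, since $f'$ is continuous and $f'(a)>0$, I would fix $\delta>0$ small enough (with $a+\delta\le b$) so that $f'(x)\ge f'(a)/2$ for all $x\in[a,a+\delta]$. By the mean value theorem, for every $x\in[a,a+\delta]$ there is some $\xi\in(a,x)$ with
\[
f(x)-f(a)=f'(\xi)(x-a)\ge \tfrac{f'(a)}{2}\,(x-a),
\]
which gives $|x-a|\le \frac{2}{f'(a)}\,|f(x)-f(a)|$ on $[a,a+\delta]$.

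Second, for $x\in[a+\delta,b]$ I would use that $f$ attains its unique minimum at $a$, so the continuous function $x\mapsto f(x)-f(a)$ is strictly positive on the compact set $[a+\delta,b]$; hence it admits a positive minimum $m>0$. Consequently, for any $x$ in this range,
\[
|x-a|\le b-a \le \frac{b-a}{m}\,(f(x)-f(a)).
\]
Taking $c=\max\{2/f'(a),\,(b-a)/m\}$ then yields $|x-a|\le c\,|f(x)-f(a)|$ for all $x\in[a,b]$.

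There is no real obstacle here; the only point to watch is that the unique-minimum hypothesis is genuinely used (only) to guarantee $m>0$ on the far piece $[a+\delta,b]$, while the near piece is handled purely by the positivity of $f'(a)$. The constant $c$ depends on $f$ through $f'(a)$ and $m$, both of which are intrinsic to $f$ on $[a,b]$, so no additional assumptions beyond those in the statement are required.
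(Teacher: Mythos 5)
Your proof is correct and follows essentially the same argument as the paper: a mean value theorem estimate on a neighbourhood $[a,a+\delta]$ where $f'\geq f'(a)/2$, combined with a compactness argument on $[a+\delta,b]$ using the unique minimum to obtain a positive lower bound on $f-f(a)$. The constants are packaged slightly differently, but the decomposition and both key steps coincide with the paper's proof.
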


\begin{proof}
    Since the derivative $f^\prime$ is continuous, there exists a neighbourhood $[a,a+\delta)$ of $a$ on which $f^\prime(x)\geq \half f^\prime(a)>0$. By the mean value theorem, it follows that $|f(x)-f(a)|\geq \half f^\prime(a)|x-a|$ holds for $x\in [a,a+\delta)$.
    
   On the other hand, since $a$ is the unique minimum of $f$ and since $[a+\delta, b]$ is a compact interval, there exists a number $m>f(a)$ such that $f(x)\geq m$, for all $x\in [a+\delta,b]$. Hence,
   \[
   	f(a) + \frac{m-f(a)}{b-a}(x-a)\leq m\leq f(x),
   \]
   for all $x\in [a+\delta,b]$. The claim now follows with $c^{-1}=\min\left\{\half f^\prime(a), \tfrac{m-f(a)}{b-a}\right\}$.
\end{proof}

\begin{proof}[Proof of Lemma \ref{stab:inradius}]
Consider the function $F\colon [0,\tfrac{w}{2}-r(T_w)]\to\R$ as given by Lemma \ref{lemma:hexagon_formular}. $F$ is strictly increasing and continuously differentiable with $F'(0)>0$ (cf.\ \eqref{eq:derif}). Thus, Lemma \ref{stab:taylor} gives
\[
r(K)-r(T_w) \leq \cc_w(F(r(K)-r(T_w)) - F(0)).
\]
Note that $r(K)-r(T_w)$ is non-negative by Lemma \ref{Blaschke}. According to Lemma \ref{lemma:hexagon_formular} we have $F(0) = \area(T_w)$ and by \eqref{eq:pal_estimates} we have $F(r(K)-r(T_w))\leq \area(K)$. We deduce
\[
    r(K)-r(T_w) \leq \cc_w(\area(K) - \area(T_w) )\leq \cc_w\varepsilon,
\]
where we used the assumption on $K$ for the last inequality.
\end{proof}

The challenge in the proof of Theorem \ref{stab:thm} is to find a suitable regular triangle $T$, which is close to $K$. First, we consider the cap domain $C\subseteq K$ from \eqref{eq:capdomaindef} as it has been guaranteed in Proposition \ref{prop:capdomain}. Next, we ``regularize'' $C$ by considering the regularized capdomain $\creg$ as defined in \eqref{eq:regcapdomaindef}. Finally, our candidate for $T$ is the regular triangle of width $w$ centered at the incenter $p$ of $K$ whose vertices $v_i$ satisfy $\widetilde{q}_i\in [p,v_i]$. The proof of Theorem \ref{stab:thm} is done in three steps, out of which the first one is going to take the most effort.
\begin{enumerate}\item[]\begin{enumerate}
    \item[Step 1:] $\delta(K,C)<\cc_w\varepsilon$, for $\varepsilon<\varepsilon_w$.
    \item[Step 2:] $\delta(C,\creg) < \cc_w\varepsilon$, for $\varepsilon<\varepsilon_w$.
    \item[Step 3:] $\delta(\creg,T) < \cc_w\varepsilon$.
\end{enumerate}\end{enumerate}%
Before we come to the proofs of the three individual inequalities, we need another lemma concerning the angles between the contact points of $K$ with its incircle. In the setting of Theorem \ref{stab:thm} we may assume that there are no two opposite contact points (in which case $K$ is a ball with radius $\tfrac w 2$), but there exists three distinct contact points $t_1,t_2,t_3$ such that $p\in\inte [t_1,t_2,t_3]$.

\begin{lemma}
    \label{stab:contacts}

    Let $\eta = r(K) - r(T_w)$. The following estimates hold:
    \begin{enumerate}
        \item  Let $C_i\subset C$ be the cap with peak $q_i$. It is attached to the incircle $B$ of $K$ at two points $s_{i,j}$ and $s_{i,k}$, $\{i,j,k\}=\{1,2,3\}$. We have $\angle(s_{i,j},p,s_{i,k}) \geq \frac{2\pi}{3} - \cc_w\eta$.
        \item  For the angles between two of the contact points $t_i$, we have
        \[
    \frac{2\pi}{3}-\cc_w\eta \leq \angle(t_i,p,t_j) \leq \frac{2\pi}{3} + \cc_w\eta.
    \]
        \item For the angles between two of the peaks of $C$, we have
        \[
        \frac{2\pi}{3}-\cc_w\eta \leq \angle(q_i,p,q_j) \leq \frac{2\pi}{3} + \cc_w\eta.
        \]
    \end{enumerate}
\end{lemma}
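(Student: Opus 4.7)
The plan is to reduce all three parts to a single spherical-trigonometric quantity associated with the cap domain. The key preliminary observation is that by the construction of $C$ in Proposition~\ref{prop:capdomain}, each peak $q_i$ is antipodal to the corresponding contact point $t_i$ through $p$: the three points $t_i,p,q_i$ lie on a common geodesic with $p$ between $t_i$ and $q_i$, $d(p,t_i)=r(K)$ and $d(p,q_i)=w-r(K)$. Indeed, the supporting line to $K$ at $t_i$ coincides with the tangent to the incircle $B$ at $t_i$, and the parallel supporting line at spherical distance $w$ touches $K$ precisely at $q_i$. In particular, the rays from $p$ through $t_i$ and through $q_i$ are opposite, so $\angle(q_i,p,q_j)$ and $\angle(t_i,p,t_j)$ are vertical angles, and Part~(3) will follow from Part~(2) for free.

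For Part~(1), I would exploit that the cap $C_i$ is symmetric under reflection across the geodesic through $p$ and $q_i$, which permutes $s_{i,j}$ and $s_{i,k}$. Hence $\beta:=\angle(s_{i,j},p,s_{i,k})=2\alpha$ with $\alpha:=\angle(q_i,p,s_{i,j})$. Since the geodesic $[q_i,s_{i,j}]$ is tangent to $B$ at $s_{i,j}$, the spherical triangle with vertices $p,s_{i,j},q_i$ has a right angle at $s_{i,j}$, and Napier's rules yield
\[
\cos\alpha=\frac{\tan r(K)}{\tan(w-r(K))}.
\]
Thus $\beta$ is a smooth, strictly decreasing function of $r\in[r(T_w),w/2]$ depending only on $w$, with $\beta(r(T_w))=\tfrac{2\pi}{3}$ (the regular-triangle case). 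The claim then reduces to a one-sided Lipschitz estimate for $\beta$ at $r(T_w)$, which I would obtain by noting that the difference quotient $(\tfrac{2\pi}{3}-\beta(r))/(r-r(T_w))$ extends continuously to $r=r(T_w)$ (with value $-\beta'(r(T_w))>0$) and is continuous on the compact interval $[r(T_w),w/2]$, hence bounded by some constant $\cc_w$. This yields $\beta(r(K))\geq \tfrac{2\pi}{3}-\cc_w\eta$; the matching upper bound $\beta\leq\tfrac{2\pi}{3}$ is automatic from the non-overlapping of the three caps.

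For Part~(2), the same reflection symmetry places $t_i$ at the midpoint of the arc of $\partial B$ interior to $C_i$, so $\angle(t_i,p,s_{i,j})=\angle(t_i,p,s_{i,k})=\beta/2$. Going once around $p$ we encounter the nine points in the cyclic order $t_1,s_{1,2},s_{2,1},t_2,s_{2,3},s_{3,2},t_3,s_{3,1},s_{1,3}$. Writing $\gamma_{ij}\geq 0$ for the angle at $p$ subtended by the free arc between $s_{i,j}$ and $s_{j,i}$, we obtain $\angle(t_i,p,t_j)=\beta+\gamma_{ij}$ together with $3\beta+\gamma_{12}+\gamma_{23}+\gamma_{31}=2\pi$. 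Combined with Part~(1), this forces $\gamma_{ij}\leq 2\pi-3\beta\leq 3\cc_w\eta$, hence $\angle(t_i,p,t_j)\in[\beta,\,\beta+3\cc_w\eta]\subseteq[\tfrac{2\pi}{3}-\cc_w\eta,\,\tfrac{2\pi}{3}+3\cc_w\eta]$, which proves Part~(2) after enlarging the constant. Part~(3) is then immediate from the vertical-angle observation above.

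The main obstacle will be the one-sided Lipschitz control of $\beta$ in Part~(1). A uniform bound on $|\beta'|$ on $[r(T_w),w/2]$ is not available because $\beta'(r)\to-\infty$ as $r\to w/2$, so a naive mean-value argument fails; instead one must exploit the continuity of the difference quotient on the compact interval, in the spirit of Lemma~\ref{stab:taylor} but in the opposite direction. Once this step is in hand, the remaining parts are just bookkeeping of arc-angles around the incircle.
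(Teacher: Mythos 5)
Your treatment of part (1) is essentially sound and close in spirit to the paper's: the cap $C_i$ is indeed symmetric about the geodesic through $p$ and $q_i$, the triangle $[p,s_{i,j},q_i]$ is right-angled at $s_{i,j}$, and Napier's rule together with a one-sided Lipschitz estimate at $r(T_w)$ (your difference-quotient argument, or equivalently the paper's route via $\sin$ of the half-angle, which is a smooth function of $r$ up to $r=\tfrac w2$ and stays in a compact subinterval of $(-1,1)$) gives the claim. The genuine gap is your ``key preliminary observation'' that $t_i$, $p$ and $q_i$ lie on a common geodesic. Proposition~\ref{prop:capdomain} only provides points $q_i\in K$ with $d(p,q_i)=w-r$ whose caps have disjoint interiors; it does not place $q_i$ on the line through $p$ and $t_i$, and in general no such placement exists: the point of $K$ realizing the width over the tangent line $\ell_i$ need not lie on the perpendicular to $\ell_i$ through the contact point (already for the slightly perturbed regular triangle of Proposition~\ref{prop:optimality} the far vertex is not the foot of that perpendicular). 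Indeed, a substantial portion of Step~1 of the stability proof is devoted to showing that $q_i$ is merely \emph{close} to the vertex opposite $\ell_i$; if the collinearity you assert were available, much of that work would be superfluous. Since your cyclic ordering of the nine points, the identity $\angle(t_i,p,s_{i,j})=\beta/2$, and the vertical-angle deduction of part (3) from part (2) all rest on this collinearity, parts (2) and (3) are not established as written.

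The repair is a containment argument that needs no collinearity. For (2), the cap $C_k$ lies in the corner region of the circumscribed tangent triangle opposite $\ell_k$, whose angular sector at $p$ is bounded by $t_i$ and $t_j$; hence $\angle(t_i,p,t_j)\ge\angle(s_{k,i},p,s_{k,j})\ge\tfrac{2\pi}{3}-\cc_w\eta$ by part (1). For (3), the symmetry of each cap about its own axis gives
$\angle(q_i,p,q_j)\ge\angle(q_i,p,s_{i,j})+\angle(s_{j,i},p,q_j)=\half\angle(s_{i,k},p,s_{i,j})+\half\angle(s_{j,i},p,s_{j,k})$.
In both cases the matching upper bounds follow because the three angles in question sum to $2\pi$ (using $p\in\inte[t_1,t_2,t_3]$, respectively the disjointness of the caps). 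Your observation that the free arcs $\gamma_{ij}$ have total angular measure at most $2\pi-3\beta\le\cc_w\eta$ is exactly the right idea for the upper bounds, but it must be combined with these sector containments rather than with midpoint positions of the $t_i$.
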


In particular, since $\eta<\cc_w\varepsilon$ by Lemma \ref{stab:inradius},
\begin{equation}
    \frac{2\pi}{3}-\cc_w\varepsilon \leq \angle(t_i,p,t_j) \leq \frac{2\pi}{3} + \cc_w\varepsilon.
\end{equation}

\begin{proof}
    For (1), we may assume $i=1$ and we let $\alpha=\angle(s_{1,2},p,s_{1,3})$. We consider the triangle $T=[p,s_{1,2},q_1]$. The following data on $T$ is known:
    \begin{itemize}
        \item $d(p,s_{1,2}) = r(K)$,
        \item $d(p,q_1) = w-r(K)$,
        \item $\angle(q_1,p,s_{1,2}) = \tfrac \alpha 2$,
        \item $\angle(p,s_{1,2},q_1) = \tfrac \pi 2$.
    \end{itemize}
    Using spherical trigonometry, we can express $\sin(\alpha/2)$ via a function $g\colon [r(T_w),w/2]\to \R$ of the inradius of $K$:
    Let $d = d(s_{1,2},q_1)$. By the spherical Pythagorean theorem, we have $\cos(w-r) = \cos(r)\cos(d)$. Hence, $\sin(d) = 1-\tfrac{\cos(w-r)^2}{\cos(r)^2}$. The spherical law of sines gives
    \[
    g(r) = \frac{\sqrt{1-\frac{\cos(w-r)^2}{\cos(r)^2}}}{\sin(w-r)}.
    \]

    \noindent Since $0<r(T_w) < r < w/2<w$, the function $g$ is continuously differentiable on its compact domain $[r(T_w),w/2]$ and, in particular, Lipschitz continuous with a Lipschitz constant depending only on the width $w$.
    It follows that
    $$
    |g(r(T_w)) - g(r(K))| \leq \cc_w\eta.
    $$
    Note that $g(r(T_w)) = \sin(\tfrac \pi 3)$, since for $r(K)=r(T_w)$ we must have that $K$ is a regular triangle (cf.\ Lemma \ref{Blaschke}), which means that the points $s_{1,2}$ and $s_{1,3}$ coincide with the the contact points $t_2$ and $t_3$. Hence, we find
    \begin{equation}
    \label{eq:sin_est}
    |\sin(\tfrac \alpha 2) - \sin(\tfrac \pi 3) | \leq \cc_w\eta.
    \end{equation}

    Since $w\leq \tfrac \pi 2$, we have
    \[
        g(r) \leq \frac{\sqrt{1-\cos(w-r)^2}}{\sin(w-r)^2} = \sin(w-r)\leq \sin(w-r(T_w)) <1,
    \]
    the claim thus follows from \eqref{eq:sin_est} and the Lipschitz continuity of $\arcsin$ on the interval $[-\sin(w-r(T_w), \sin(w-r(T_w)]\subset (-1,1)$. 

    For (2), we observe that $\angle(t_i,p,t_j) \geq \angle(s_{k,i},p,s_{k,j})$, where $k$ is such that $\{i,j,k\}=\{1,2,3\}$. Hence, the lower bound follows from (1). But since the angles between the contact points sum up to $2\pi$, the upper bound follows as well.

    For (3), we notice that \[
    \angle(q_i,p,q_j)\geq \angle(q_i,p,s_{i,j}) + \angle(s_{j,i},p,q_j) = \half\angle(s_{i,k},p,s_{i,j}) + \half\angle(s_{j,i},p,s_{j,k}). 
    \]
    So, just as in (2), the lower bound follows from (1) and the upper bound follows from the lower bound and the fact that the angles in question sum up to $2\pi$.
\end{proof}

\noindent\textbf{Step 1: \boldmath$\delta(K,C)<\cc_w\varepsilon$, for $\varepsilon<\varepsilon_w$.}
Let $t_1,t_2,t_3\in B\cap K$, where $B$ denotes the incircle of $K$, be the contact points of $\partial B$ and $\partial K$ such that the incenter $p$ of $B$ is contained in $\inte [t_1,t_2,t_3]$. Then the three tangent lines $\ell_i$, $1\leq i\leq 3$ to $B$ at $t_i$ define a triangle $T$ such that $C\subseteq K\subseteq T$. Let $p_i$ be the vertex opposite to $\ell_i$ of this triangle. Our goal is to show that the peak $q_i$ of the corresponding cap in $C$ has distance at most $\cc_w\varepsilon$ to $p_i$. Since $\max_{x\in T}d(x,C)$ is attained at a vertex of $T$ and $K\subseteq T$, this shows $\max_{x\in K}d(x,C)<\cc_w\varepsilon$. Since $C\subseteq K$, this would finish the proof of Step 1.

In order to bound the distance, we recall that $d(q_i,p) = w-r(K)$ by construction. We aim to show first that
\begin{equation}
    \label{stab:eq:dpp}
    d(p_i,p)\leq d(q_i,p) + \cc_w \varepsilon.
\end{equation} 
For the sake of simplicity, let $i=1$ in the following. It is clear that for a fixed radius $r(K)$, $d(p_1,p)$ is strictly increasing, as the angle $\angle(t_2,p,t_3)$ increases. We saw in Lemma \ref{stab:contacts} that this angle is bounded from above by $\tfrac{2\pi}{3}+\cc_w\eta$, where $\eta = r(K) - r(T_w)$. Thus, the distance $d(p_i,p)$ is not longer than the hypotenuse of the right triangle $T^\prime$ given by the incenter $p$, a point $t\in\partial B$ with $\angle(p_i,p,t) = \tfrac\pi 3 + \cc_w\eta$ and a third vertex $p_1^\prime$ at the intersection of the tangent line of $B$ at $t$ and the line generated by $p$ and $p_1$ (cf.\ Figure \ref{fig:Tprime}).

\begin{center}
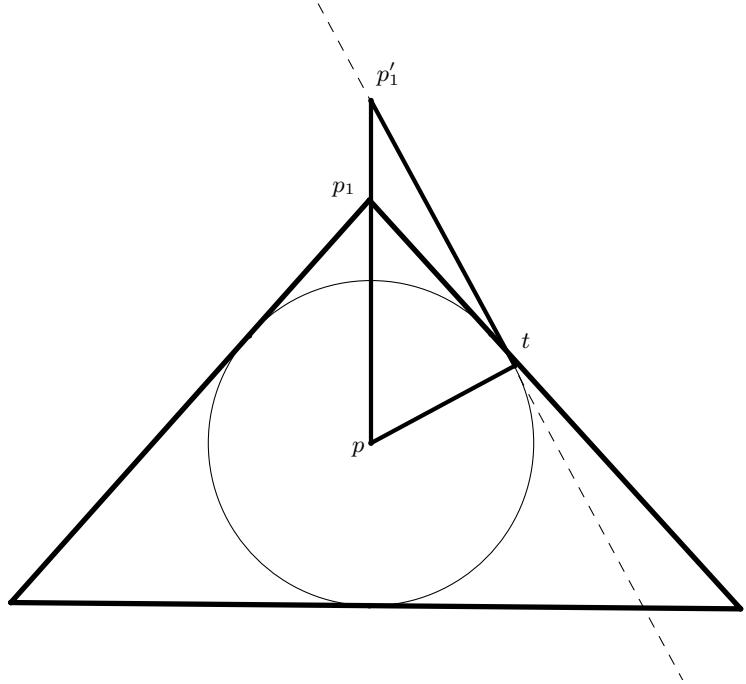
\begin{figure}[H]
\begin{tikzpicture}[line cap=round,line join=round,>=triangle 45,x=3.0cm,y=3.0cm, scale = .7]
\clip(-3,-1.5) rectangle (3,2.8);
\draw(-2.92,3.83) -- (-1.66,3.83);
\draw(0,0) circle (3.09cm);
\draw [dash pattern=on 4pt off 4pt,domain=-3:3] plot(\x,{(--1.06-0.91*\x)/0.49});
\draw[line width=1.6pt] (0,2.17) -- (0.91,0.49) -- (0,0) -- (0,2.17);
\draw[line width=2pt] (-2.28,-1.01) -- (-0.01,1.54) -- (2.34,-1.05) -- (-2.28,-1.01);
\begin{scriptsize}
\fill [color=black] (-2.74,3.83) circle (1.5pt);
\draw[color=black] (-2.46,4.06) node {$eta = 0.03$};
\fill [color=black] (0,2.17) circle (1.5pt);
\draw[color=black] (0.11,2.34) node {$p_1^\prime$};
\fill [color=black] (0.91,0.49) circle (1.5pt);
\draw[color=black] (0.98,0.65) node {$t$};
\fill [color=black] (0,0) circle (1.5pt);
\draw[color=black] (-0.08,-0.04) node {$p$};
\fill [color=black] (-0.77,0.68) circle (1.5pt);
\fill [color=black] (0.76,0.69) circle (1.5pt);
\fill [color=black] (-0.01,-1.03) circle (1.5pt);
\fill [color=black] (-2.28,-1.01) circle (1.5pt);
\fill [color=black] (-0.01,1.54) circle (1.5pt);
\draw[color=black] (-0.17,1.61) node {$p_1$};
\fill [color=black] (2.34,-1.05) circle (1.5pt);
\end{scriptsize}
\end{tikzpicture}
\caption{The triangle $T^\prime = [p,t,p_1^\prime]$ used to estimate the distance $d(p,p_1)$}\label{fig:Tprime}
\end{figure}
\end{center}

In this triangle, all the side lengths and angles that are given depend solely on $r(K)$ and $w$. Thus, we can upper bound $d(p_i,p)$, by $g(r(K)) =d(p_i^\prime,p)$, a function whose only variable is the inradius of $K$. We note that for $r(K)=r(T_w)$, we have $q_i=p_i=p_i^\prime$ and, thus, $g(r(T_w)) = w-r(T_w)$.

In order to show \eqref{stab:eq:dpp}, we show that the function $g(r)$ is Lipschitz continuous in a neighbourhood of $r(T_w)$. Note that $g(r)$ is well-defined for $0<r<r(T_w)$ as well.

Let $l$ be the length of the edge of $T^\prime$ opposite to $p$. Then, by the spherical Pythagorean theorem, $\cos(g(r)) = \cos(r)\cos(l)$. Since the angle at $p$ is given by $\tfrac{\pi}{3}+\cc_w\eta$, we can compute $l$ using the spherical law of sines and find 
\begin{equation}
\label{stab:eq:l}
    \sin\left(l\right) = \sin\left(\tfrac{\pi}{3}+\cc_w(r-r(T_w))\right)\sin(g(r)).
\end{equation}
Substituting this in the formula obtained by the Pythagorean theorem yields the following implicit description of $g(r)$:
\begin{equation}
\label{stab:eq:implicit}
    \cos(g(r)) = \cos(r)\sqrt{1-\big(\sin(\tfrac{\pi}{3}+\cc_w(r-r(T_w)))\sin(g(r))\big)^2}.
\end{equation}
We seek to apply the implicit function theorem (see Walter \cite[Theorem 12.2.A]{Wal}) in order to see that $g(r)$ is Lipschitz continuous. To this end, let 
\begin{equation}
    F(g,r) = \cos(g) -  \cos(r)\sqrt{1-\sin(\tfrac{\pi}{3}+\cc_w(r-r(T_w)))^2\sin(g)^2}
\end{equation}
be defined in a suitable neighbourhood of $(g,r) = (w-r(T_w),r(T_w))$. Since $g(r(T_w)) = w-r(T_w)$, it follows from \eqref{stab:eq:implicit} that $F(w-r(T_w),r(T_w)) = 0$. Moreover, we have

\begin{gather*}
\frac{\partial}{\partial g} F(w-r(T_w),r(T_w)) = \sin(w-r(T_w)) \left( \frac{\tfrac{3}{4}\cos(r(T_w))\cos(w-r(T_w))}{\sqrt{1-\sin(\tfrac{\pi}{3})^2\sin(w-r(T_w))^2}}-1 \right)= \\
 = \sin(w-r(T_w))\big(\tfrac{3}{4}\cos(r(T_w))^2 -1\big)<0,
\end{gather*}

where we used \eqref{stab:eq:implicit} and the Pythagorean theorem on the sphere to obtain the second line. By the implicit function theorem, there exists a neighbourhood $U$ of $r(T_w)$ and a continuously differentiable function $\widetilde{g}:U\to\R$ such that $F(\widetilde{g}(r),r)=0$, for all $r\in U$. Since this function $\widetilde{g}$ is unique, it agrees with $g$ on $U$.
Since
\[
 \frac{\partial}{\partial r} F(w-r(T_w),r(T_w)) = \frac{\sqrt{3}\cc_w\cos(r(T_w))\sin(w-r(T_w))^2}{4\sqrt{1-\tfrac 34 \sin(w-r(T_w))^2}} + \sqrt{1-\tfrac 34 \sin(w-r(T_w))^2}\sin(r(T_w))
\]
we have by the implicit function theorem
\[
g'(r(T_w)) = - \left(\frac{\partial}{\partial g} F(w-r(T_w),r(T_w))\right)^{-1} \frac{\partial}{\partial r} F(w-r(T_w),r(T_w)) >0.
\]
So there exists a compact neighbourhood $V\subset U$ of $r(T_w)$ on which the derivative of $g$ is bounded from below. In particular $g$ is Lipschitz continuous on $V$ and $V$ depeneds only on $w$.
If $\varepsilon<\varepsilon_w$, for a suitable number $\varepsilon_w$, then $r(K)$ falls into this neighbourhood (cf.\ Lemma \ref{stab:inradius}). Hence, for $\varepsilon<\varepsilon_w$, we have with $\eta = r(K)-r(T_w) <\cc_w\varepsilon$ the following estimate:
\[\begin{split}
d(p_1,p) &\leq g(r(K)) \leq \cc_w\eta+g(r(T_w)) = \cc_w\eta + (w-r(T_w))\\
&\leq \cc_w^\prime\eta + w-r(K) = \cc_w^\prime\eta + d(q_1,p),
\end{split}\]
which proves \eqref{stab:eq:dpp} (cf.\ Lemma \ref{stab:inradius}).

In order to bound $d(q_1,p_1)$, we consider the points $x_2$ and $x_3$ at which the tangent lines $\ell_2$ and $\ell_3$ intersect $B(p,w-r(K))$. By construction, $q_i$ is located on the shorter arc $\arc{x_2x_3}\subset\partial B(p,w-r(K))$. Since disks of radius less than $\tfrac \pi 2$ are convex, $d(p_i,q_i)$ is upper bounded by the distance of $p_i$ to the segment $[x_2,x_3]$. This distance is maximized by the two endpoints $x_i$ ($x_2$, say). Since $t_2$, $x_2$ and $p_1$ lie subsequently on the line $\ell_2$, we have (cf.\ Figure \ref{fig:ell2})
\[
d(p_1,q_1)\leq d(x_2,p_1) = d(t_2,p_1) - d(t_2,x_2).
\]
Using the Pythagorean theorem for the triangles $[t_2,p,x_2]$ and $[t_2,p,p_1]$, the two distances on the right hand side can be computed (recall that $d(x_2,p) = d(q_1,p)$). 
\begin{equation}
\label{eq:diff_est}
d\left(t_2,p_1\right)-d\left(t_2,x_2\right)=
\arccos\left(\frac{\cos(d(p_1,p))}{\cos(r(K))}\right) - \arccos\left(\frac{\cos(d(q_1,p))}{\cos(r(K))}\right).
\end{equation}

\begin{center}
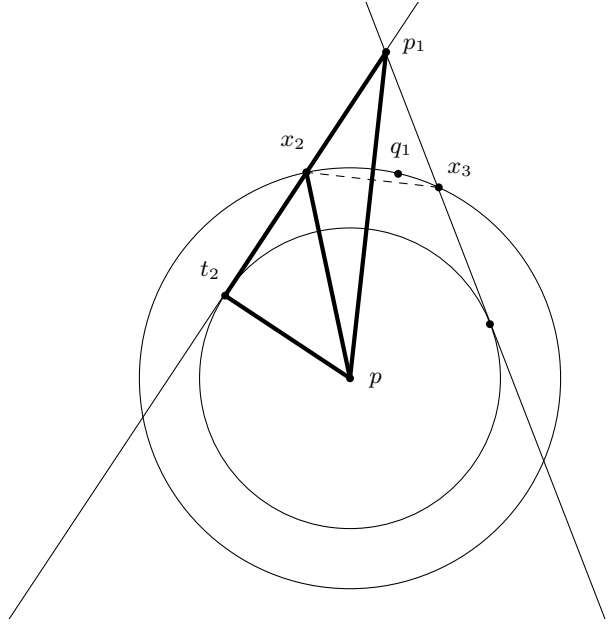
\begin{figure}[H]
\begin{tikzpicture}[line cap=round,line join=round,>=triangle 45,x=2.0cm,y=2.0cm]
\clip(-2.3,-1.6) rectangle (2.3,2.5);
\draw(0,0) circle (2cm);
\draw [line width=0.4pt] (0,0) circle (2.8cm);
\draw [domain=-2.3:2.3] plot(\x,{(--1--0.83*\x)/0.55});
\draw [domain=-2.3:2.3] plot(\x,{(--1-0.93*\x)/0.36});
\draw [dash pattern=on 3pt off 3pt] (0.59,1.27)-- (-0.29,1.37);
\draw [line width=1.6pt] (0,0)-- (-0.83,0.55);
\draw (-0.83,0.55)-- (-0.29,1.37);
\draw [line width=1.6pt] (-0.29,1.37)-- (0,0);
\draw [line width=1.6pt] (0.24,2.17)-- (0,0);
\draw [line width=1.6pt] (-0.83,0.55)-- (0.24,2.17);
\begin{scriptsize}
\fill [color=black] (-0.83,0.55) circle (1.5pt);
\draw[color=black] (-0.93,0.72) node {$t_2$};
\fill [color=black] (0.93,0.36) circle (1.5pt);
\fill [color=black] (-0.29,1.37) circle (1.5pt);
\draw[color=black] (-0.38,1.57) node {$x_2$};
\fill [color=black] (0.59,1.27) circle (1.5pt);
\draw[color=black] (0.73,1.39) node {$x_3$};
\fill [color=black] (0.24,2.17) circle (1.5pt);
\draw[color=black] (0.43,2.21) node {$p_1$};
\fill [color=black] (0,0) circle (1.5pt);
\draw[color=black] (0.17,-0.01) node {$p$};
\fill [color=black] (0.32,1.36) circle (1.5pt);
\draw[color=black] (0.34,1.52) node {$q_1$};
\end{scriptsize}
\end{tikzpicture}
\caption{The two triangles $[t_2,p,x_2]$ and $[t_2,p,p_1]$ used to bound $d(p_1,q_1)$}%
\label{fig:ell2}
\end{figure}
\end{center}

Since $r(K)\leq r(T_w)+\cc_w\varepsilon$ and   $d(p_1,p)\geq d(q_1,p)=w-r(K)\geq w-r(T_w)-\cc_w\varepsilon$ it follows that the arguments of the $\arccos$ function in \eqref{eq:diff_est} are bounded from above by $\cos(w-r(T_w)-\cc_w\varepsilon)/\cos(r(T_w)+\cc_w\varepsilon)$. For $\varepsilon_w$ small enough and $\varepsilon<\varepsilon_w$ this number is less than 1. So we can use the Lipschitz continuity of $\arccos$ on compact subintervals of $(-1,1)$ to deduce

\[\begin{split}
    d(p_1,q_1) \leq d(t_2,p_1) - d(t_2,x_2) \leq\cc_w \left| \frac{\cos(d(p_1,p)) -\cos(d(q_1,p))}{\cos(r(K))}\right|\leq \cc_w' | d(p_1,p) -d(q_1,p)| \leq \cc_w'' \varepsilon,
\end{split}\]

where we used $r(K)\leq \tfrac w2$, the Lipschitz continuity of $\cos$ and \eqref{stab:eq:dpp}. The first step is now proven.

\noindent\textbf{Step 2: \boldmath$\delta(C,\creg) < \cc_w\varepsilon$ for $\varepsilon<\varepsilon_w$.}
Since the extreme points of the two cap domains are given by a subset of the boundary points of the incircle of $K$ together with the three peaks $q_i$, respectively $\widetilde{q}_i$, it suffices to show 
\begin{equation}
    \label{stab:eq:peaks}
    d(q_i,\widetilde{q}_i) < \cc_w\varepsilon. 
\end{equation}
For $i=1$, we have by definition $q_1=\widetilde{q}_1$, so let $i\in\{2,3\}$. By construction, we have
\begin{equation}
\label{eq:angle_est}
\angle(q_i,p,\widetilde{q}_i) = \left|\angle(q_i,p,q_1) - \angle(\widetilde{q_i},p,q_1)\right|=\left|\tfrac{2\pi}{3} - \angle(q_i,p,q_1)\right|\leq \cc_w\varepsilon,
\end{equation}
where we used item (3) of Lemma \ref{stab:contacts} and Lemma \ref{stab:inradius} to obtain the last inequality. In particular, for $\varepsilon<\varepsilon_w$ and $\varepsilon_w$ small enough, we have $\angle(q_i,p,\widetilde{q}_i)< \tfrac \pi 2$.
Hence, using the spherical law of cosines,
\[\begin{split}
\cos(d(q_i,\widetilde{q}_i)) &= \cos(w-r(K))^2 + \sin(w-r(K))^2\cos(\angle(q_i,p,\widetilde{q}_i))\\
&= \cos(w-r(K))^2 + (1-\cos(w-r(K))^2)\cos(\angle(q_i,p,\widetilde{q}_i))\\
&= \cos(\angle(q_i,p,\widetilde{q}_i)) + \cos(w-r(K))^2(1-\cos(\angle(q_i,p,\widetilde{q}_i)))\\
&\geq \cos(\angle(q_i,p,\widetilde{q}_i)).
\end{split}\]
Thus, \eqref{stab:eq:peaks} follows from the monotonicity property of $\arccos$ together with \eqref{eq:angle_est}.

\noindent\textbf{Step 3: \boldmath$\delta(\creg,T) < \cc_w\varepsilon$.} Let $v_i$ denote the vertices of $T$ and let $\widetilde{q}_i$ denote the peaks of $\creg$ for $1\leq i\leq 3$. By construction, we have $d(v_i,\widetilde{q}_i) = r(K)-r(T_w)$, which is less than $\cc_w\varepsilon$ by Lemma \ref{stab:inradius}. This shows already that $\max_{x\in T}d(x,\creg) < \cc_w\varepsilon$. On the other hand, those extreme points of $\creg$ that are not among the three peaks of its caps, lie on the boundary of the incircle $B(p,r(K))$ of $K$. By construction, $T$ is centered at $p$ as well and, thus, contains the disk $B(p,r(T_w))$. Hence, any extreme point $x$ of $\creg$ satisfies $d(x,T)\leq r(K)-r(T_w)\leq \cc_w\varepsilon$, which concludes the proof of the final step.\hfill $\Box$\\

The next proposition shows that the dependence on $\varepsilon$ in the bound $\delta(K,T_w)\leq \cc_w\varepsilon$ cannot be improved. The construction is illustrated in Figure \ref{fig:opt}.

\begin{prop}
 \label{prop:optimality}
 Let $w>0$ and consider a regular triangle $T_w = [v_1,v_2,v_3]\subset S^2$ of width $w \leq \tfrac \pi 2$. For $\varepsilon>0$ let  $K_\varepsilon = [v_1,v_2,v_3^\varepsilon]$, where $v_3^\varepsilon$ is the point on the geodesic line through $v_2$ and $v_3$ such that $d(v_3,v_3^\varepsilon)=\varepsilon$ and $v_3\in [v_2,v_3^\varepsilon]$ with the assumption that $v_1$, $v_2$ and $v_3^\varepsilon$ are contained in an open hemisphere. The following statements hold:
 \begin{enumerate}
     \item $w(K_\varepsilon) = w$ for any $\varepsilon<\varepsilon_w$, where $\varepsilon_w>0$ is a constant depending only on $w$,
     \item $\delta(K_\varepsilon,T) \geq \tfrac \varepsilon 2$ for any regular triangle $T$ of width $w$,
     \item $\area(K_\varepsilon) \leq \area(T_w)+\cc_w\varepsilon$, for all $\varepsilon<\varepsilon_w$, where $\cc_w$ and $\varepsilon_w$ are constants that depend only on $w$.
 \end{enumerate}
\end{prop}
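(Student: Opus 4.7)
The plan is to address the three assertions in the order stated, with (1) requiring the most delicate argument and (2), (3) following by short direct computations.

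For (1), the key observation is that the geodesic through $v_2$ and $v_3^\varepsilon$ coincides with the geodesic through $v_2$ and $v_3$ by construction. Consequently, the supporting lune of $K_\varepsilon$ whose bounding hemisphere contains $[v_2,v_3^\varepsilon]$ has the same breadth $w$ as the corresponding supporting lune of $T_w$, giving $w(K_\varepsilon) \leq w$. For the reverse inequality I would compute, via Napier's rules or the spherical law of sines, the widths of $K_\varepsilon$ at its other two edges, namely the perpendicular distances from $v_3^\varepsilon$ to the geodesic through $v_1,v_2$ and from $v_2$ to the geodesic through $v_1,v_3^\varepsilon$. Both distances equal $w$ at $\varepsilon=0$ and have strictly positive derivative in $\varepsilon$ at that point (in the Euclidean limit the increments are $\varepsilon\sin(\pi/3)$ and $\sim\varepsilon/2$), so both exceed $w$ for small $\varepsilon>0$. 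A continuity argument on the width function then rules out that $w(K_\varepsilon)$ could be attained at a supporting hyperplane through a single vertex, yielding $w(K_\varepsilon)=w$ for every $\varepsilon<\varepsilon_w$ with some $\varepsilon_w>0$ depending only on $w$.

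For (2), I would argue by contradiction. Note first that $d(v_2,v_3^\varepsilon)=a+\varepsilon$, where $a=d(v_2,v_3)$ is the common edge-length of every regular spherical triangle of width $w$, and that every such regular triangle $T$ has $\diam T=a$. If $\delta(K_\varepsilon,T)<\varepsilon/2$ for some regular $T$ of width $w$, there exist $u,u'\in T$ with $d(v_2,u),\,d(v_3^\varepsilon,u')<\varepsilon/2$, and the triangle inequality then forces
\[
a+\varepsilon=d(v_2,v_3^\varepsilon)\leq d(v_2,u)+d(u,u')+d(u',v_3^\varepsilon)<\tfrac{\varepsilon}{2}+a+\tfrac{\varepsilon}{2}=a+\varepsilon,
\]
contradicting $d(u,u')\leq\diam T=a$. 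Hence $\delta(K_\varepsilon,T)\geq\varepsilon/2$ for every regular triangle $T$ of width $w$.

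For (3), the geodesic segment $[v_1,v_3]$ subdivides $K_\varepsilon$ into the two triangles $T_w=[v_1,v_2,v_3]$ and $\Delta:=[v_1,v_3,v_3^\varepsilon]$, which have disjoint interiors; hence $\area(K_\varepsilon)-\area(T_w)=\area(\Delta)$. The triangle $\Delta$ has one side $[v_3,v_3^\varepsilon]$ of length exactly $\varepsilon$, while its other two sides are bounded by $\diam T_w+\varepsilon$. Feeding these data into Lemma \ref{TriangleArea} and performing a first-order Taylor expansion in $\varepsilon$ at $\varepsilon=0$ (where $\Delta$ degenerates and $\area(\Delta)=0$) yields $\area(\Delta)\leq\cc_w\varepsilon$ for all $\varepsilon<\varepsilon_w$, the coefficient $\cc_w$ being finite and depending only on $w$. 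The main obstacle will be part (1): verifying rigorously that $w(K_\varepsilon)=w$ requires explicit spherical trigonometric computations for two of the three edges together with a continuity argument ruling out smaller widths at vertex-supporting hyperplanes. Once (1) is settled, (2) and (3) are short consequences as indicated.
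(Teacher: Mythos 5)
Parts (2) and (3) of your proposal coincide with the paper's proof: the Hausdorff-distance lower bound via $\diam(K_\varepsilon)\geq\diam(T)+\varepsilon$ and the triangle inequality is exactly the argument given there, and the area estimate via the decomposition $K_\varepsilon=T_w\cup[v_1,v_3,v_3^\varepsilon]$ together with Lemma \ref{TriangleArea} and a first-order expansion at $\varepsilon=0$ is also the same. Likewise, your upper bound $w(K_\varepsilon)\leq w$ in (1) is the paper's argument (there phrased as $K_\varepsilon\subset L$ for the width-realizing lune $L$ of $T_w$ bounded by the line through $v_2,v_3$, which also contains $v_3^\varepsilon$).

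The one genuine soft spot is your lower bound $w(K_\varepsilon)\geq w$ in (1). Computing the widths at the two edges $[v_1,v_2]$ and $[v_1,v_3^\varepsilon]$ and finding them $>w$ does not by itself finish the argument: the minimal width is a minimum over \emph{all} supporting hyperplanes, including those meeting $K_\varepsilon$ only at a vertex, and the ``continuity argument'' you invoke to dismiss these is not an argument --- continuity of $H\mapsto w_{K_\varepsilon}(H)$ gives no reason why the minimum over vertex-supporting lines should be controlled by the values at the three edge-supporting lines. You would need an additional structural fact (e.g.\ that for spherical convex polygons the minimal width is attained at an edge-supporting hyperplane) or a direct treatment of the vertex case. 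The paper avoids all of this with one line: since $T_w\subseteq K_\varepsilon$ and the spherical width is monotone under inclusion (every supporting lune of $K_\varepsilon$ contains $T_w$ and can be shrunk to a supporting lune of $T_w$ of no larger breadth), one gets $w(K_\varepsilon)\geq w(T_w)=w$ immediately. I recommend replacing your edge-by-edge computation by this monotonicity argument; as written, part (1) of your proof is incomplete.
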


\begin{proof}
    We start with item (1). Let $t$ be the midpoint of $[v_2,v_3]$. Since $w\leq \tfrac \pi 2$, the width of $T_w$ is realized by the lune $L$ given by the supporting line defined by $v_2$ and $v_3$ and the supporting line of $T_w$ at $v_1$ which is orthogonal to $[t,v_1]$. For $\varepsilon>0$ small enough we have $K_\varepsilon\subset L$. Hence, $w(K_\varepsilon)\leq w$. Since $T_w\subseteq K_\varepsilon$, the statement (1) follows.

    For (2) let $T$ be any regular triangle. Considering the segment $[v_2,v_3^\varepsilon]\subset K_\varepsilon$ shows $\diam(K_\varepsilon) \geq \diam(T) + \varepsilon$. Let $\delta =\delta(K_\varepsilon,T)$, then for each $x\in K_\varepsilon$ there exists $y\in T$ with $d(x,y) \leq \delta$. Let $[x_1,x_2]\subset K_\varepsilon$ be a segment of maximal length and let $y_1,y_2\in T$ be such that $d(x_i,y_i)\leq \delta$. Then,
    \[
    \diam(T)+\varepsilon \leq \diam(K_\varepsilon) = d(x_1,x_2) \leq 2\delta + d(y_1,y_2) \leq 2\delta + \diam(T).
    \]

    For (3), let $T_\varepsilon = [v_1,v_3,v_3^\varepsilon]$, then it suffices to show $\area(T_\varepsilon) \leq \cc_w\varepsilon$. We denote by $a_w$ and $\alpha_w$ the side length and angle of $T_w$ respectively. Using Lemma \ref{TriangleArea} we obtain
    \[
    \area(T_\varepsilon) = 2\arccot\left( \frac{\cot\frac{a_w}{2}\cot\frac{\cc_w\varepsilon}{2}+\cos\alpha_w}{\sin\alpha_w} \right)=:f_w(\varepsilon).
    \]
    It can be checked that $f_w$ is differentiable at $\varepsilon=0$ with $f_w'(0) = \sin(\alpha_w)\tan(a_w/2) =\colon \cc_w / 2>0$. Thus, there exists $\varepsilon_w>0$ such that for all $\varepsilon\in [0,\varepsilon_w)$ we have
    \[
        \area(T_\varepsilon) = f_w(\varepsilon) \leq \cc_w\varepsilon + f_w(0) = \cc_w\varepsilon.\qedhere
    \]
\end{proof}

\begin{figure}[H]
\begin{center}
\begin{tikzpicture}[line cap=round,line join=round,>=triangle 45,x=1cm,y=1cm,scale=9]
\clip(-0.05,-0.05) rectangle (0.8,0.6);
\draw [line width=0.8pt] (0.3033390205427441,0.525398595498212)-- (0.7370943494944908,0);
\draw [line width=0.8pt] (0.6066780410854878,0)-- (0.7370943494944908,0);
\draw [line width=0.8pt] (0,0)-- (0.6066780410854878,0);
\draw [line width=0.5pt,dashed] (0.6066780410854878,0)-- (0.3033390205427441,0.525398595498212);
\draw [line width=0.8pt] (0.3033390205427441,0.525398595498212)-- (0,0);
\draw (0.31,0.55) node[anchor=center] {$v_1$};
\draw (-0.01,-0.058) node[anchor=south] {$v_2$};
\draw (0.6070271014891571,-0.058) node[anchor=south] {$v_3$};
\draw (0.75,-0.058) node[anchor=south] {$v_3^\varepsilon$};
\draw (0.67188619529,0.027) node[anchor=center] {$\varepsilon$};
\draw [line width=0.5pt,dotted] (0.3033390205427441,0.525398595498212)-- (0.3033390205427441,0);
\draw (0.325,0.26969066668386005) node[anchor=center] {$w$};
\begin{scriptsize}
\draw [fill=black] (0,0) circle (.2pt);
\draw [fill=black] (0.6066780410854878,0) circle (.2pt);
\draw [fill=black] (0.3033390205427441,0.525398595498212) circle (.2pt);
\draw [fill=black] (0.7370943494944908,0) circle (.2pt);
\end{scriptsize}
\end{tikzpicture}
\caption{An ``almost regular'' triangle showing the optimality of the stability result}
\label{fig:opt}
\end{center}
\end{figure}
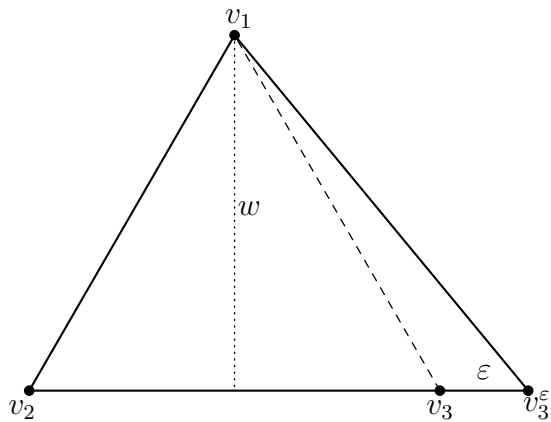

\section{P\'al's problem for \boldmath$w>\frac{\pi}{2}$}
\label{sec:pal:largew}

\subsection{The spherical isominwidth inequality for $w>\frac{\pi}{2}$.}
\label{subsec:pal:largew}
Here we prove Theorem \ref{intro:thm:largewidth}.
Let us recall that a Reuleaux triangle $U_r$ in $S^2$ is an intersection of three balls of radius $r$, placed at the vertices of a regular triangle with edge length $r$. It is well-known that $U_r$ is a body of constant width $r\leq \tfrac{\pi}{2}$.

The following proposition provides an explicit description of the polar of a Reuleaux triangle.

\begin{prop}
    \label{prop:uwcirc}
    Let $U_w\subset S^2$ be a Reuleaux triangle given as the intersection of three circles $B(x_i,w)\subset S^2$, $1\leq i\leq 3$, where $d(x_i,x_j)=w\leq\tfrac \pi 2$, for $i\neq j$. Then,
    \begin{equation}
    \label{eq:polarreuleaux}
    U_w^\circ = -\conv\left\{ B(x_1,\tfrac{\pi}{2}-w)\cup B(x_2,\tfrac{\pi}{2}-w)\cup B(x_3,\tfrac{\pi}{2}-w)   \right\}.
    \end{equation}
\end{prop}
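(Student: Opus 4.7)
The plan is to deduce the formula from two standard pieces of spherical polarity: a direct computation of the polar of a single spherical cap, and the general duality identifying the polar of an intersection with the spherical convex hull of the individual polars.

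First I would compute $B(x,w)^\circ$ for $x \in S^2$ and $0 < w \leq \tfrac{\pi}{2}$. Writing $B(x,w) = \{a \in S^2 : \langle x, a\rangle \geq \cos w\}$, a unit vector $u$ belongs to $B(x,w)^\circ$ iff $\max_{a \in B(x,w)}\langle a, u\rangle \leq 0$. Setting $\theta = d(x,u)$, this maximum equals $1$ when $\theta \leq w$ (attained at $a = u$) and $\cos(\theta - w)$ otherwise (attained at the boundary point of the cap nearest to $u$). Hence $u \in B(x,w)^\circ$ iff $\theta \geq \tfrac{\pi}{2} + w$, equivalently $d(-x,u) \leq \tfrac{\pi}{2} - w$, yielding
\[
B(x, w)^\circ = B(-x, \tfrac{\pi}{2} - w) = -B(x, \tfrac{\pi}{2} - w).
\]

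Next, I would invoke the intersection-union duality: for convex bodies $A_1, A_2, A_3 \subset S^2$ whose intersection has nonempty interior, one has
\[
(A_1 \cap A_2 \cap A_3)^\circ = \conv(A_1^\circ \cup A_2^\circ \cup A_3^\circ).
\]
The quickest derivation uses the bipolar theorem $A^{\circ\circ} = A$ together with the tautologies $(\bigcup_i X_i)^\circ = \bigcap_i X_i^\circ$ and $(\conv X)^\circ = X^\circ$: setting $B := \conv(A_1^\circ \cup A_2^\circ \cup A_3^\circ)$ one obtains $B^\circ = A_1^{\circ\circ} \cap A_2^{\circ\circ} \cap A_3^{\circ\circ} = A_1 \cap A_2 \cap A_3$, and applying polarity once more gives the formula. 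Alternatively it follows from the cone duality $(\pos A_1 \cap \pos A_2 \cap \pos A_3)^\star = (\pos A_1)^\star + (\pos A_2)^\star + (\pos A_3)^\star$ applied to the pointed cones $\pos A_i$, whose relative interiors intersect thanks to $\inte(A_1 \cap A_2 \cap A_3) \neq \emptyset$.

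Combining these two steps with $U_w = B(x_1,w) \cap B(x_2,w) \cap B(x_3,w)$ gives
\[
U_w^\circ = \conv\bigl(-B(x_1, \tfrac{\pi}{2} - w) \cup -B(x_2, \tfrac{\pi}{2} - w) \cup -B(x_3, \tfrac{\pi}{2} - w)\bigr),
\]
which equals the right-hand side of \eqref{eq:polarreuleaux} since the antipodal map $x \mapsto -x$ commutes with spherical convex hulls. There is no real obstacle: the one point requiring care is the case analysis in the first step on whether $u$ lies inside the cap $B(x,w)$, and the hypothesis $w \leq \tfrac{\pi}{2}$ guarantees both that each $B(x_i,w)$ is itself a convex body and that the convex hull appearing on the right-hand side lives in an open hemisphere, so the polar calculus functions as expected.
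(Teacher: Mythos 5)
Your proposal is correct and follows essentially the same route as the paper: compute the polar of a single cap, $B(x,w)^\circ=-B(x,\tfrac{\pi}{2}-w)$ (which the paper leaves as ``easy to check'' and you verify in detail), and then apply the intersection--convex-hull duality, which the paper obtains exactly via the cone identity you mention as your alternative derivation. No gaps.
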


\begin{proof}
    By definition, one has $U_w = B(x_1,w)\cap B(x_2,w)\cap B(x_3,w)$. Let $C_i = \pos B(x_i,w)$. Then, $\pos U_w = C_1\cap C_2\cap C_3$ and, thus (cf.\ Schneider \cite[Theorem 1.6.3]{Sch14})
    \[
    \pos(U_w)^\star = \conv_{\R^3}(C_1^\star\cup C_2^\star \cup C_3^\star).
    \]
    It is easy to check that $B(x_i,w)^\circ = -B(x_i,\tfrac{\pi}{2}-w)$. Hence,
    \[
    \begin{split}
        \pos(U_w^\circ) = -\conv_{\R^3}\left(\bigcup_{i=1}^3 \pos B(x_i,\tfrac{\pi}{2}-w)\right) = \pos\left( -\conv_{S^2} \left(\bigcup_{i=1}^3 B(x_i,\tfrac{\pi}{2}-w)\right)\right)
    \end{split}
    \]
    and the claim follows.
\end{proof}

By the proposition, Theorem \ref{intro:thm:largewidth} is equivalent to the following statement.

\begin{theorem}
\label{thm:largewidth}
    Let $K\subset S^2$ be a convex body of width $w > \tfrac{\pi}{2}$. Then, 
    \[
    \area(K)\geq\area(U_{\pi-w}^\circ),
    \]
    for a Reuleaux triangle $U_{\pi-w}$ of width $\pi-w$. Equality is obtained, if and only if $K$ is a congruent copy of $U_{\pi-w}^\circ$.
\end{theorem}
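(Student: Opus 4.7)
The strategy is to reduce the statement to a Blaschke--Lebesgue-type bound on the polar body via the duality of Lemma \ref{prop:duality}. Since area is strictly monotone with respect to inclusion and every convex body admits a reduction, any minimizer of the isominwidth problem is reduced; hence by Theorem \ref{cor:lassak} (using $w > \tfrac\pi 2$) one may assume $K$ is of constant width $w$, and then Proposition \ref{prop:constwidthpolar} gives that $K^\circ$ is of constant width $\pi - w \in (0, \tfrac\pi 2)$. So it suffices to prove the inequality under these assumptions on $K$.

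The key input is a linear relation between $\area(K)$ and $\area(K^\circ)$. I would rely on two classical identities on $S^2$: the polar duality identity
\[
    \area(L) + \per(L^\circ) = 2\pi \qquad \text{for every convex body } L \subset S^2,
\]
which follows from the spherical Gauss--Bonnet theorem applied to polygons (each edge of $L$ of length $\ell$ dualises to a vertex of $L^\circ$ of angle $\pi-\ell$) and then extended by approximation; and the spherical Barbier-type identity
\[
    \area(L) + \cot(v/2)\, \per(L) = 2\pi \qquad \text{for every } L \text{ of constant width } v \in (0,\pi),
\]
which is classical. Applying the Barbier identity to $K^\circ$ (with $v = \pi - w$, so $\cot((\pi-w)/2) = \tan(w/2)$) and substituting into the polar identity for $K$ yields the affine formula
\[
    \area(K) = 2\pi\bigl(1 - \cot(w/2)\bigr) + \cot(w/2)\, \area(K^\circ),
\]
with the analogous formula for $(U_{\pi-w}^\circ, U_{\pi-w})$. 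Since $w \in (\tfrac\pi 2, \pi)$ implies $\cot(w/2) > 0$, $\area(K)$ is a strictly increasing affine function of $\area(K^\circ)$.

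Next I would invoke the spherical Blaschke--Lebesgue theorem of Araujo (Theorem \ref{thm:araujo}): among convex bodies of constant width $\pi - w \leq \tfrac\pi 2$, the Reuleaux triangle $U_{\pi-w}$ uniquely minimizes the area. Since $K^\circ$ is such a body, $\area(K^\circ) \geq \area(U_{\pi-w})$, and the affine relation yields $\area(K) \geq \area(U_{\pi-w}^\circ)$. Equality forces $\area(K^\circ) = \area(U_{\pi-w})$, so by the equality case of Araujo's theorem $K^\circ$ is congruent to $U_{\pi-w}$, and consequently $K$ is congruent to $U_{\pi-w}^\circ$.

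The main obstacle I anticipate is not any of the convex-geometric arguments above but the careful justification or citation of the two integral identities. Both are classical, and the Barbier-type identity is presumably part of the content referenced as Theorem \ref{thm:leichtweiss}; granting them, the heart of the proof is really the Blaschke--Lebesgue step applied to $K^\circ$ combined with the reduced-implies-constant-width duality.
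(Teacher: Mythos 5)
Your proposal is correct and follows essentially the same route as the paper: pass to a reduction, use Lemma \ref{prop:duality} to get a polar body of constant width $\pi-w$, combine the identity $\area(K)=2\pi-\per(K^\circ)$ with Ara\'ujo's Barbier-type formula to obtain a strictly increasing affine relation between $\area(K)$ and $\area(K^\circ)$, and conclude via the spherical Blaschke--Lebesgue theorem applied to $K^\circ$. The only slip is that you have swapped the two citations: the Blaschke--Lebesgue statement is Theorem \ref{thm:leichtweiss} (Leichtweiss), while Theorem \ref{thm:araujo} is the Barbier-type perimeter identity.
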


In order to prove this theorem, we require two spherical analogs of classical theorems on bodies of constant width in $\R^2$.
First, the Blaschke--Lebesgue Theorem states that the area of a body $K\subset\R^2$ of constant width $w$ is lower-bounded by the area of a Reuleaux triangle of the same width. This result has been proven on the sphere by Leichtweiss \cite{Lei05} and recently also by Bezdek \cite{Bez21} (see also Blaschke \cite{Bla15}, Lebesgue \cite{Leb14}, Ara\'ujo \cite{Ara97} and B\"or\"oczky, Sagmeister \cite{BoS22}).

\begin{theorem}[Leichtweiss]
\label{thm:leichtweiss}
    Let $K\subset S^2$ be a convex body of constant width $w \leq \tfrac{\pi}{2}$. Then,
    \[
    \area(K)\geq\area(U_w),
    \]
    where $U_w\subset S^2$ is a Reuleaux triangle of width $w$. Equality holds, if and only if $K$ is a congruent copy of $U_w$.
\end{theorem}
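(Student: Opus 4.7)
The plan is to adapt the classical Blaschke--Lebesgue argument from the plane to $S^2$, proceeding in three stages: a structural description of constant-width bodies, approximation by spherical Reuleaux polygons, and a local deformation showing that among such polygons the Reuleaux triangle minimizes area. I first establish the structural fact that if $K\subset S^2$ has constant width $w\leq\tfrac\pi 2$, then for every boundary point $p\in\partial K$ there exists a partner point $p^*\in\partial K$ with $d(p,p^*)=w$, and every supporting great circle at $p$ is orthogonal to the geodesic $[p,p^*]$. This follows from the definition of width via supporting lunes: a supporting lune at $p$ of breadth $w$ forces $p^*$, the center of the opposite bounding hemisphere, to lie on $\partial K$. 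Consequently $\partial K$ decomposes into at most countably many closed arcs of spherical circles of angular radius $w$, each centered at a boundary point lying on another such arc, exactly as in Blaschke's planar picture.

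Next I approximate $K$ in the Hausdorff metric by Reuleaux polygons, defined as finite intersections $\bigcap_i B(x_i,w)$ where the $x_i$ themselves lie on the boundary of the intersection and pairwise satisfy $d(x_i,x_j)\leq w$. The spherical Reuleaux construction guarantees such intersections have constant width $w$, and since both the width and the constant-width property are preserved under Hausdorff limits, it suffices to prove the theorem within this dense subclass. For a Reuleaux polygon with $n$ arcs, the area admits an explicit description as a sum of spherical triangle areas and circular-segment contributions, parameterized by the angular measures $\alpha_1,\dots,\alpha_n$ of the arcs, which satisfy $\sum_i\alpha_i=2\pi$. Lemma \ref{TriangleArea} supplies the key spherical trigonometric input for this computation.

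The final and main step is a local deformation. Given a Reuleaux polygon with $n\geq 4$ arcs, I construct a one-parameter family of constant-width perturbations that shrinks one arc by an infinitesimal amount and grows a neighboring arc accordingly. A careful computation using the area formula from the second step should show that the variation of the area is strictly negative; this uses $w\leq\tfrac\pi 2$ crucially, since all involved disks of radius $w$ must be convex. Iterating this move reduces the polygon until $n=3$ with $\alpha_1=\alpha_2=\alpha_3=\tfrac{2\pi}{3}$, which is the regular Reuleaux triangle $U_w$; rigidity throughout the deformations forces the original $K$ to be congruent to $U_w$ in the equality case. The main obstacle is verifying the strict sign of this area variation, since the spherical trigonometry is less forgiving than its Euclidean analog; the failure of the argument for $w>\tfrac\pi 2$ reflects the phenomenon from Section \ref{sec:pal:largew} that the minimizers in the obtuse regime become polars of Reuleaux triangles, not Reuleaux triangles themselves.
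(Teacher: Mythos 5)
The paper does not prove Theorem \ref{thm:leichtweiss} at all: it is imported as a known result of Leichtweiss \cite{Lei05}, with alternative proofs attributed to \cite{Bez21}, \cite{Ara97} and \cite{BoS22}, so there is no internal proof to compare yours against. Judged on its own, your outline follows the classical Blaschke--Lebesgue strategy (structure of constant-width boundaries, approximation by Reuleaux polygons, local deformation), but as written it leaves the theorem unproved.

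The decisive gap is your third step. The entire content of the inequality is the claim that the area variation under the arc-shrinking deformation has a strictly negative sign, and you explicitly defer exactly this point (``a careful computation \dots should show'', ``the main obstacle is verifying the strict sign''). That computation is where the spherical trigonometry genuinely differs from the planar case and where $w\leq\tfrac{\pi}{2}$ must enter; without it nothing has been established. A second gap concerns the equality case: Hausdorff approximation by Reuleaux polygons can only transport the non-strict inequality $\area(K)\geq\area(U_w)$ to general $K$, and ``rigidity throughout the deformations'' is not an argument for why a non-polygonal body of constant width cannot attain equality. One needs either a quantitative strict decrease (a stability estimate in the spirit of Theorem \ref{BL-stab}) or a separate treatment of bodies that are not Reuleaux polygons.

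Two smaller points. Your structural claim that $\partial K$ ``decomposes into at most countably many closed arcs of spherical circles of radius $w$'' is false for general constant-width bodies: already in the Euclidean plane there are real-analytic bodies of constant width whose boundary contains no circular arc of radius $w$ at all; the correct structural statement is the completeness property $K=\bigcap_{p\in\partial K}B(p,w)$, together with the existence of a diametral partner for every boundary point. Also, Reuleaux polygons have an odd number of arcs, so the reduction must pass from $n$ arcs to $n-2$, not from $n\geq 4$ one arc at a time.
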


Next, we need a spherical version of Barbier's theorem on bodies of constant width; it states that a body $K$ of constant width $w$ in $\R^2$ has perimeter $2\pi w$. In particular, the perimeter depends only on the width. On the sphere, this is no longer the case. In order to determine the perimeter of a spherical body $K\subset S^2$ of constant width, one has to consider other geometric properties of $K$. Blaschke gave a spherical version of Barbier's theorem which involves the total curvature of $\partial K$. Here we use a formulation due to Ara\'ujo \cite{Ara96} involving the area of $K$.

\begin{theorem}[Ara\'ujo]
\label{thm:araujo}
Let $K\subset S^2$ be a convex body of constant width $w>0$. Then we have
\begin{equation}
    \per(K) =(2\pi-\area(K))\tan\left(\frac{w}{2}\right),
\end{equation}
where $\per(K)$ denotes the perimeter of $K$.
\end{theorem}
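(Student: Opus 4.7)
The strategy is to combine the spherical Gauss--Bonnet theorem with a local ``constant-width identity'' in the spirit of the Euclidean proof of Barbier's theorem. First I reduce to the case where $\partial K$ is piecewise $C^2$ by approximation: the class of constant-width bodies on $S^2$ is closed under Hausdorff convergence, and $\area$, $\per$, and $w$ are all continuous in the Hausdorff metric, so approximating $K$ by smooth (or Reuleaux-polygon-type) constant-width bodies and passing to the limit reduces to this case. The spherical Gauss--Bonnet theorem applied to $K$ (Gauss curvature $+1$) yields
\[
\int_{\partial K}\kappa_g\,ds+\sum_i\theta_i+\area(K)=2\pi,
\]
where $\kappa_g$ is the geodesic curvature along the smooth arcs of $\partial K$ and the $\theta_i$ are the exterior angles at the corners of $\partial K$. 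Comparing with the claimed formula, the theorem is equivalent to the identity
\[
\int_{\partial K}\kappa_g\,ds+\sum_i\theta_i=\per(K)\cot(w/2).
\]

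To establish this identity I would parametrise $\partial K$ by the direction $\theta\in S^1$ of its supporting geodesic, and use that $K$ having constant width $w$ (and therefore diameter $w$, by the spherical analogue of the fact that width equals diameter for constant-width bodies) produces a natural antipodal pairing $\theta\leftrightarrow\theta+\pi$: the supporting geodesic in direction $\theta$ touches $\partial K$ at a point $x(\theta)$, the opposite supporting geodesic at direction $\theta+\pi$ touches at $x^\ast(\theta)=x(\theta+\pi)$, and the diameter chord $[x(\theta),x^\ast(\theta)]$ has length $w$ and is orthogonal to both tangent geodesics. Corners of $\partial K$ correspond to $\theta$-intervals on which $x(\theta)$ is constant, while the opposite $\theta$-intervals parametrise circular arcs of $\partial K$. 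Using spherical trigonometry on the lune spanned by the two tangent geodesics at $x(\theta)$ and $x^\ast(\theta)$, an infinitesimal rotation of $\theta$ couples the displacements of $x(\theta)$ and $x^\ast(\theta)$ through a relation depending only on $w$. Integrating this coupling over a half-circle $\theta\in[0,\pi]$ and exploiting the involutive symmetry $\theta\leftrightarrow\theta+\pi$, the total curvature quantity $\int_{\partial K}\kappa_g\,ds+\sum_i\theta_i$ is recast as a constant multiple of $\per(K)$, the constant being $\cot(w/2)$.

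The main technical obstacle is deriving the precise infinitesimal constant-width identity on $S^2$ between the motions of $x(\theta)$ and $x^\ast(\theta)$. In the Euclidean plane this is merely $\rho(\theta)+\rho(\theta+\pi)=w$ for the radii of curvature at opposite points, and gives Barbier's theorem $\per(K)=\pi w$ after one integration; on $S^2$ the corresponding relation has a genuinely trigonometric form governed by the Jacobi fields along the chord of length $w$, and it is this trigonometry that produces the factor $\cot(w/2)$ (which in the flat limit $w\to 0$ correctly recovers the Euclidean constant $2/w$). Once this local relation is isolated, the corner contributions fit naturally into the same $\theta$-parametrisation: a corner of exterior angle $\phi$ occupies an interval of length $\phi$ in $\theta$, whose opposite $\theta$-interval parametrises an arc on $\partial K$ whose combined contribution (integrated geodesic curvature plus zero corner angle) equals $\phi\cot(w/2)$ times the arc length, so corners and opposite arcs balance pairwise. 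Summing over all such pairs yields the displayed identity, which together with Gauss--Bonnet gives $\per(K)\cot(w/2)+\area(K)=2\pi$, equivalent to the theorem.
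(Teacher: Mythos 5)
The paper does not actually prove this statement: Theorem \ref{thm:araujo} is quoted from Ara\'ujo \cite{Ara96} and used as a black box, so there is no in-paper argument to compare yours against; what matters is whether your proposal stands on its own. Its skeleton is sound. The Gauss--Bonnet reduction is correct, and the corner/opposite-arc bookkeeping does close up: opposite a vertex of exterior angle $\phi$ the boundary contains a circular arc of radius $w$ and length $\ell=\phi\sin w$, and the identity $\cot(w/2)-\cot(w)=1/\sin(w)$ gives exactly $\phi+\ell\cot(w)=\ell\cot(w/2)$, which is the required balance (your own phrasing of this step, ``equals $\phi\cot(w/2)$ times the arc length,'' is dimensionally garbled, but the correct statement is true).

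The genuine gap is the one you name yourself and then skip: the local constant-width identity at antipodal boundary points is never derived, and it is the entire content of the spherical Barbier theorem. Concretely, what you need is that if $x$ and $x^\ast$ are the touching points of a pair of opposite supporting geodesics, with osculating-circle radii $r$ and $r^\ast$ (so $\kappa_g=\cot r$), then $r+r^\ast=w$, and that the corresponding arc elements satisfy $ds=\sin(r)\,d\tau$, $ds^\ast=\sin(r^\ast)\,d\tau$ for the common rotation parameter $\tau$ of the normal geodesic. Granting these, the computation does close, since
\begin{equation}
\cos r+\cos(w-r)=2\cos\left(\tfrac w2\right)\cos\left(r-\tfrac w2\right)=\cot\left(\tfrac w2\right)\bigl(\sin r+\sin(w-r)\bigr),
\end{equation}
so $\kappa_g\,ds+\kappa_g^\ast\,ds^\ast=\cot(w/2)(ds+ds^\ast)$; but neither $r+r^\ast=w$ nor the $\sin(r)\,d\tau$ coupling is established in your write-up, so as it stands the proposal is a plan rather than a proof. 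Two secondary points also need attention: the density of piecewise-$C^2$ (or Reuleaux-polygon-type) constant-width bodies in the Hausdorff metric on $S^2$ is asserted without justification, and the step ``constant width $w$ implies diameter $w$'' is only safe for $w\leq\tfrac\pi2$, whereas the theorem is stated for all $w\in(0,\pi)$ and is applied in the paper precisely after polarity has brought the width below $\tfrac\pi2$ --- so you should either restrict to $w\leq\tfrac\pi2$ (which suffices for the paper's use) or argue the pairing without invoking the diameter.
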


\begin{proof}[Proof of Theorem \ref{thm:largewidth}]
By considering a reduction of $K$, we may assume that $K$ is a reduced convex body. In this case, by Lemma \ref{prop:duality}, $K^\circ$ is of constant width $\pi-w$. In order to give a lower bound on $\area(K)$, we recall the identity
\begin{equation}
\label{eq:areaperimeter}
\area(K)= 2\pi-\per(K^\circ),
\end{equation}
Since $K^\circ$ is of constant width $\pi-w\leq \tfrac{\pi}{2}$, we can
apply Theorem \ref{thm:araujo} to $K^\circ$
and obtain
\begin{equation}\label{eq:area:polarity}
\area(K) = 2\pi-(2\pi-\area(K^\circ))\tan\left(\frac{\pi-w}{2}\right).
\end{equation}
We see that $\area(K)$ is minimal (among reduced bodies of width $w$), if and only if $\area(K^\circ)$ is minimal (among bodies of constant width $\pi -w$). According to Theorem \ref{thm:leichtweiss}, the latter is the case, if and only if $K^\circ$ is a congruent copy of $U_{\pi-w}$.
Since isometries commute with the polarity operation, this is equivalent to $K$ being a congruent copy of $U_{\pi-w}^\circ$.
\end{proof}

In view of Proposition \ref{prop:constwidthpolar}, the body $U_{\pi-w}^\circ$ is of constant width. Thus, Theorem \ref{thm:largewidth} may also be regarded as an extension of Theorem \ref{thm:leichtweiss} to the case $w>\tfrac \pi 2$. 
In the limit case $w\searrow \tfrac{\pi}{2}$ we find that $U_{\pi/2}$ is in fact a regular triangle of width $\tfrac \pi 2$, since a circle of radius $\tfrac \pi 2$ on $S^2$ is a great circle, i.e., a spherical line. It is not hard to see that the polar body of $U_{\pi/2}$ is itself a regular triangle of width $\tfrac \pi 2$. Thus, for $w\searrow \tfrac{\pi}{2}$, Theorem \ref{thm:largewidth} is accordant with Theorem \ref{isominwidth}.

To conclude, we consider the regular triangle $T=[x_1,x_2,x_3]\subset U_w$ of diameter $w$ given by the three intersection points of the circle $B(x_i,w)$ in $U_w$. Then $T$ has the same diameter and circumradius as $U_w$. Hence, $T^\circ \supseteq U_w^\circ$ has the same width and inradius as $U_w^\circ$. Indeed, $U_w^\circ$ is a reduction of a regular triangle of width $\pi-w$. This implies, for instance, that if $w>\tfrac\pi 2$ we have equality in Lemma \ref{Blaschke} not only for the regular triangle $T_w$ but for any convex body $K$ with $U_{\pi-w}^\circ\subseteq K \subseteq T_w$.

\subsection{Stability of the spherical isominwidth problem for \boldmath$w>\frac{\pi}{2}$}\label{subsec:stab:largew}

We saw in the previous section that there is a duality between the Blaschke-Lebesgue inequality for spherical bodies of constant width at most $\tfrac \pi 2$ and the isominwidth inequality for reduced bodies of width at least $\tfrac \pi 2$. Therefore, the following result from B\"or\"oczky, Sagmeister \cite{BoS22} will be helpful.

\begin{theorem}\label{BL-stab}
Let $0<D<\frac{\pi}2$.
If $K\subset S^2$ is a body of constant width $D$, $\varepsilon\geq 0$ and  
$$
\area\left(K\right)\leq (1+\varepsilon)\area\left(U_D\right),
$$
then there exists a Reuleaux triangle $U\subset S^2$ of width $D$ such that
$\delta(K,U)\leq \theta\varepsilon$ where $\theta>0$ is an explicitly calculable constant depending on $D$.
\end{theorem}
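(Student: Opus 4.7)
The plan is to establish stability for Leichtweiss's inequality (Theorem \ref{thm:leichtweiss}) through a combination of structural analysis of spherical constant-width bodies, Ara\'ujo's identity (Theorem \ref{thm:araujo}), and a local analysis of the area functional near the Reuleaux triangle.

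First, I would use Ara\'ujo's identity to translate the area bound into a perimeter bound: since $\per(K) = (2\pi - \area(K))\tan(D/2)$ for every body $K$ of constant width $D$, the assumption $\area(K) \leq (1+\varepsilon)\area(U_D)$ is equivalent to $\per(K) \leq \per(U_D) + C_D\varepsilon$ for an explicit constant $C_D$. Combined with Blaschke's selection theorem and the continuity of both width and area, this already gives a qualitative version of the statement: if $\area(K_n) \to \area(U_D)$ for a sequence of constant-width bodies then, up to isometries, $K_n$ converges in Hausdorff distance to a Reuleaux triangle of width $D$, since the limit body is again of constant width $D$ and attains equality in Theorem \ref{thm:leichtweiss}.

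To upgrade to the linear quantitative bound, I would parameterize a Hausdorff neighborhood of a fixed Reuleaux triangle $U_D$ inside the space of constant-width bodies — for instance, by marking the three extremal arcs of $\partial U_D$ and tracking how the arcs composing $\partial K$ deviate from them. The expected key estimate is that for any constant-width body $K$ with $\delta(K, U_D) = \eta$ we have $\area(K) - \area(U_D) \geq C_D \, \eta$ for some explicitly computable $C_D > 0$. The linear rate comes from the fact that $U_D$ has three genuine corners, so local perturbations of the boundary are linked by the constant-width constraint in a non-smooth way, producing first-order increments of area rather than the quadratic rate one sees at smooth extremizers.

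The main obstacle is precisely this last step — obtaining linear (rather than the more standard $\sqrt\varepsilon$) dependence. This requires a careful case analysis of $\partial K$ near each of the three corner regions of the comparison Reuleaux triangle, exploiting simultaneously the area and perimeter controls produced in the first step and the rigidity that the constant-width constraint imposes on the circular arcs composing $\partial K$. Handling constant-width bodies that are not themselves Reuleaux polygons, uniformly in the qualitative neighborhood given by compactness, is what makes this a genuine computation rather than a formal first-order argument.
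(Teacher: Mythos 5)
There is no internal proof to compare against here: the paper does not prove Theorem~\ref{BL-stab} at all, but imports it from B\"or\"oczky--Sagmeister \cite{BoS22}, where the stability of the spherical Blaschke--Lebesgue inequality is established. Your proposal must therefore stand on its own, and as written it has a genuine gap: the entire quantitative content of the theorem is concentrated in the ``expected key estimate'' $\area(K)-\area(U_D)\geq C_D\,\eta$, which you assert and then describe as the main obstacle rather than prove. Everything preceding it is either qualitative or vacuous. The compactness/Blaschke-selection step yields only that small area deficit forces small Hausdorff distance to \emph{some} Reuleaux triangle, with no rate and with a constant that is in principle not explicitly calculable --- in tension with the statement of the theorem, which demands an explicit $\theta$. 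Moreover, the reformulation via Ara\'ujo's identity buys nothing: for bodies of constant width $D$ the perimeter is an exact affine function of the area, so the ``perimeter control'' is literally the same piece of information as the area control, and ``exploiting simultaneously the area and perimeter controls'' cannot supply any additional leverage in the corner analysis.

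Two further points on the key estimate itself. First, as stated it is false for a \emph{fixed} $U_D$ (take $K$ a slightly rotated copy of $U_D$: the area deficit is zero but $\delta(K,U_D)>0$); it must be formulated with $\eta=\min_U\delta(K,U)$ over all congruent Reuleaux triangles, and proving a lower bound against the \emph{optimally placed} comparison body is precisely where the difficulty lies. Second, the heuristic that ``three genuine corners'' force a first-order area increment is not an argument: one must control arbitrary constant-width competitors (not just Reuleaux polygons), and the constant-width constraint couples antipodal boundary arcs globally, so a local perturbation near one corner propagates to the opposite arc. This is the analysis actually carried out in \cite{BoS22}, and without it your outline establishes only the (already known) qualitative convergence, not the claimed linear stability.
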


Since the duality between the Blaschke-Lebesgue inequality and the isominwidth inequality only exists on the level of reduced convex bodies, we will have to control the the distance of a convex body to its reduction.

\begin{lemma}
\label{lemma:red_dist}
Let $K\subset S^2$ a convex body of width $w>\frac{\pi}{2}$ with $\area\left(K\right)\leq\area\left(U^{\circ}_{\pi-w}\right)+\varepsilon$ for some $\varepsilon>0$. Moreover, let $L$ be a reduction of $K$. Then,
$$
\delta\left(K,L\right)\leq\cc_w\sqrt{\varepsilon}
$$
for some positive constant $\cc_w$.
\end{lemma}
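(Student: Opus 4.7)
\noindent\emph{Plan.} For each $x\in K$ I will show $d(x,L)\leq\cc_w\sqrt{\varepsilon}$ by constructing a region of area at least $c_w d(x,L)^2$ inside $\conv(L\cup\{x\})\setminus L\subseteq K\setminus L$; since $L\subseteq K$, this controls $\delta(K,L)=\sup_{x\in K}d(x,L)$. Two preliminary ingredients are needed. First, Theorem~\ref{intro:thm:largewidth} applied to the reduction $L$ (of width $w$) gives $\area(L)\geq\area(U^\circ_{\pi-w})$, and combined with the hypothesis this yields $\area(K)-\area(L)\leq\varepsilon$. Second, since $L$ is reduced with $w(L)=w>\tfrac{\pi}{2}$, Theorem~\ref{cor:lassak} implies that $L$ is of constant width $w$, and Proposition~\ref{prop:constwidthpolar} then gives that $L^\circ$ is of constant width $\pi-w<\tfrac{\pi}{2}$. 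The circumradius of $L^\circ$ is bounded by its diameter $\pi-w$, so polarizing shows that $L$ contains a spherical cap $B=B(p_0,r_0)$ of radius $r_0:=w-\tfrac{\pi}{2}>0$ depending only on $w$.

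\medskip

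\noindent\emph{Core construction.} Fix $x\in K\setminus L$ and set $d:=d(x,L)$. The tangent cone from $x$ to $L$ (the set of directions at $x$ whose short geodesic meets $L$) contains the tangent cone from $x$ to $B$, because $B\subseteq L$. For the latter the classical formula $\sin\alpha_B=\sin r_0/\sin d(x,p_0)$ together with $\sin d(x,p_0)\leq 1$ gives $\alpha_B\geq r_0$, so the half-angle $\alpha$ of the cone to $L$ satisfies $\alpha\geq r_0$ as well. The spherical ``pie slice'' $P$ at $x$ of radius $d$ inside this cone is contained in $\conv(L\cup\{x\})\setminus L$: for $z\in P$ with $d(x,z)<d$, the short geodesic from $x$ through $z$ reaches some $l\in L$ at distance $\geq d>d(x,z)$, hence $z\in[x,l]\subseteq\conv(L\cup\{x\})$ and $z\notin L$. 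A direct computation gives $\area(P)=2\alpha(1-\cos d)\geq 2r_0(1-\cos d)\geq c_w d^2$ for some $c_w>0$ depending only on $w$ (using $1-\cos d\geq 2d^2/\pi^2$ on $[0,\pi]$). Chaining:
\[
c_w d^2\leq\area(\conv(L\cup\{x\})\setminus L)\leq\area(K)-\area(L)\leq\varepsilon,
\]
so $d\leq\cc_w\sqrt{\varepsilon}$ with $\cc_w:=1/\sqrt{c_w}$.

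\medskip

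\noindent\emph{Main obstacle.} The delicate step is producing the uniform bound $\alpha_B\geq r_0$: the tangent-cone formula is only well-posed for $d(x,p_0)\in(r_0,\pi-r_0)$. When $w$ is close to $\pi$, $r_0$ is close to $\tfrac{\pi}{2}$ and this window is narrow, so the degenerate regime $d(x,p_0)\geq\pi-r_0$ may arise for legitimate $x\in K$. In that regime a direct geometric argument shows that every direction from $x$ reaches the cap $B$ via a short geodesic, so the tangent cone is all of the tangent space at $x$ and the pie slice becomes the full disk $B(x,d)$, which only strengthens the area bound; this check is what allows $c_w$ (and hence $\cc_w$) to depend only on $w$.
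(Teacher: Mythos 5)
Your proof is correct and follows the same overall strategy as the paper's: both arguments place a spherical ``pie slice'' of area at least $c_w\,d(x,L)^2$ inside $K\setminus L$, anchored at a point $x\in K$ far from $L$, and bound that area by $\varepsilon$ using $\area(L)\geq\area(U^\circ_{\pi-w})$, i.e.\ Theorem~\ref{intro:thm:largewidth} applied to the reduction. The only real difference is the source of the lower bound on the opening angle of the slice: the paper takes the supporting lune of $L$ with corners $x$ and $-x$, whose angle at the corner equals its breadth and is therefore at least $w$, whereas you use the tangent cone from $x$ to an inscribed cap $B(p_0,w-\tfrac{\pi}{2})\subseteq L$, obtaining the smaller but still $w$-dependent angle $w-\tfrac{\pi}{2}$. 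Your route costs an extra input, namely that $L$ contains such a cap; your derivation of this from $\diam(L^\circ)=\pi-w$ is fine but rests on the equivalence of constant width and constant diameter for bodies of width at most $\tfrac{\pi}{2}$ (Han--Wu \cite{HW21}), which the paper cites but never states, so you should make that reference explicit. Finally, the ``degenerate regime'' $d(x,p_0)\geq \pi-r_0$ that you guard against in your last paragraph is in fact vacuous: it would force $-x\in B(p_0,r_0)\subseteq L\subseteq K$, contradicting that $K$ lies in an open hemisphere together with $x\in K$; your fallback there is harmless but unnecessary.
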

\proof
Let us assume that $L\subsetneq K$, otherwise there is nothing to prove. Let $x\in K\setminus L$. Let $\alpha$ be the breadth of the supporting lune $S$ to $L$ whose corners are $x$ and $-x$. Then $\alpha\geq w$. We observe that for $B=B\left(x,\frac{\delta\left(K,L\right)}{2}\right)$, we have $B\cap L=\emptyset$ and $B\cap S\subset K$ by convexity. Hence, from Theorem~\ref{thm:largewidth}, we get
$$
\area\left(U_{\pi-w}^\circ\right)\leq\area\left(L\right)\leq\area\left(K\right)-\area\left(B\cap S\right)\leq\area\left(U_{\pi-w}^\circ\right)+\varepsilon-\area\left(B\cap S\right),
$$
so we derive $\area\left(B\cap S\right)\leq\varepsilon$. On the other hand,
$$
\area\left(B\cap S\right)=\frac{\alpha}{2\pi}\area\left(B\right)\geq\frac{w}{2\pi}\area\left(B\right)=w\left(1-\cos\frac{\delta\left(K,L\right)}{2}\right)\geq\cc_w\delta\left(K,L\right)^2,
$$
and that concludes the proof.\endproof

Finally, we require the following lemma due to Glasauer \cite[Hilfssatz 2.2]{Gla95} in order to exploit the duality.

\begin{lemma}\label{lemma:Hausdorff:polarity}
Let $K,L\subset S^2$ be convex bodies such that $\delta\left(K,L\right)<\frac{\pi}{2}$. Then,
$$
\delta\left(K,L\right)=\delta\left(K^\circ,L^\circ\right).
$$
\end{lemma}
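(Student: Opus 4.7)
My plan is to establish a pointwise upper bound on the distance to the polar body, namely
\[
d(u, L^\circ) \le \tfrac{\pi}{2} - d(u, L), \qquad \text{whenever } u \notin L \text{ and } d(u, L) < \tfrac{\pi}{2},
\]
valid for any spherically convex body $L$ contained in an open hemisphere. Combining this with the $1$-Lipschitz estimate $|d(u, K) - d(u, L)| \le \delta(K, L)$ (which follows from the defining property of the Hausdorff distance via the triangle inequality on $S^2$) will yield the equality of Hausdorff distances.

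I would prove the pointwise inequality by explicit construction. Given $u \notin L$ with $\alpha := d(u, L) < \tfrac{\pi}{2}$, let $x^* \in L$ be a foot of the perpendicular from $u$ on $L$. Since $\alpha < \tfrac{\pi}{2}$, the ball $B(u, \alpha)$ is convex and meets $L$ only on $\partial L$; by the spherical separation theorem there is a great circle supporting both $L$ and $B(u, \alpha)$ at $x^*$. As it is tangent to $\partial B(u, \alpha)$ at $x^*$, it must be perpendicular to the geodesic $[u, x^*]$. Let $c$ be the pole of the closed hemisphere bounded by this great circle that contains $L$. Then $c$ lies on the extension of $[u, x^*]$ past $x^*$ at spherical distance $\tfrac{\pi}{2}$ from $x^*$, so $d(u, c) = \tfrac{\pi}{2} + \alpha$ and its antipode $v^* := -c$ satisfies $d(u, v^*) = \tfrac{\pi}{2} - \alpha$. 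Since $\langle c, y\rangle \ge 0$ for all $y \in L$, we have $\langle v^*, y\rangle \le 0$ for all $y \in L$, hence $v^* \in L^\circ$.

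With this inequality established, set $\varepsilon := \delta(K, L) < \tfrac{\pi}{2}$. I would use the basic characterization $L^\circ = \{u \in S^2 : d(u, L) \ge \tfrac{\pi}{2}\}$ coming from Lemma \ref{lemma:luneslines}: for $u \in K^\circ$ we have $d(u, K) \ge \tfrac{\pi}{2}$, and the $1$-Lipschitz estimate gives $d(u, L) \ge \tfrac{\pi}{2} - \varepsilon > 0$. If $d(u, L) \ge \tfrac{\pi}{2}$, then $u \in L^\circ$ and $d(u, L^\circ) = 0 \le \varepsilon$; otherwise the pointwise inequality gives $d(u, L^\circ) \le \tfrac{\pi}{2} - d(u, L) \le \varepsilon$. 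Taking supremum over $u \in K^\circ$ and exchanging the roles of $K$ and $L$ yields $\delta(K^\circ, L^\circ) \le \varepsilon$. Since this quantity is also $<\tfrac{\pi}{2}$, applying the same argument with $K, L$ replaced by $K^\circ, L^\circ$ and invoking $K^{\circ\circ} = K$, $L^{\circ\circ} = L$ produces the reverse inequality, hence equality.

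The main obstacle is the construction of the witness $v^* \in L^\circ$ achieving $d(u, v^*) \le \tfrac{\pi}{2} - d(u, L)$. It rests on correctly identifying a supporting great circle of $L$ at the foot of the perpendicular from $u$ that is perpendicular to the connecting geodesic; the assumption $d(u, L) < \tfrac{\pi}{2}$ ensures that $B(u, d(u, L))$ is convex so that the separation argument applies. The hypothesis $\delta(K, L) < \tfrac{\pi}{2}$ in the lemma is precisely what guarantees this regime is entered for every $u \in K^\circ$ (and symmetrically for $v \in L^\circ$).
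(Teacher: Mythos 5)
Your argument is correct. The paper itself gives no proof of this lemma --- it is quoted from Glasauer's thesis [Hilfssatz 2.2] --- so there is nothing to compare against line by line; what you have written is a valid self-contained derivation. The two pillars both hold: the identification $L^\circ=\{u\in S^2: d(u,L)\ge \tfrac\pi2\}$ is immediate from $d(u,y)=\arccos\langle u,y\rangle$, and the witness construction is sound, since $\operatorname{int}B(u,\alpha)$ and $L$ are disjoint convex sets, the conic separation theorem in $\R^3$ yields a great circle through the foot point $x^*$ perpendicular to $[u,x^*]$, and $\alpha<\tfrac\pi2$ guarantees both that $B(u,\alpha)$ is convex and that $d(u,c)=\alpha+\tfrac\pi2\le\pi$ so that the antipode $-c$ of the pole really is at distance $\tfrac\pi2-\alpha$ from $u$. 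The bootstrapping via $K^{\circ\circ}=K$ to upgrade the one-sided estimate $\delta(K^\circ,L^\circ)\le\delta(K,L)$ to an equality is also legitimate, because the first step already shows $\delta(K^\circ,L^\circ)<\tfrac\pi2$, so the hypothesis is again satisfied. Two cosmetic points: you should say that $B(u,\alpha)$ meets $L$ only on $\partial B(u,\alpha)$ (disjointness of interiors is what the separation argument actually needs), and it is worth stating explicitly that you use the paper's assertion that polars of convex bodies are again convex bodies, so that the symmetric estimate and the bipolar step apply.
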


Now we are ready to prove our stability theorem for obtuse width. As in Section \ref{subsec:stab:smallw} it is enough to prove the theorem for small $\varepsilon$, since the Hausdorff distance of convex bodies in $S^n$ is bounded by $\pi$.

\vspace{6pt}\noindent{\textbf{Theorem~\ref{intro:thm:largewidthstab}a. }}\emph{
Let $U_{\pi-w}\subset S^2$ denote a Reuleaux triangle of width $\pi-w$ for $w\in\left(\frac{\pi}{2},\pi\right)$. There exist constants $\cc_w,\varepsilon_w>0$ depending only on $w$ such that for any $\varepsilon\in [0,\varepsilon_w)$ and any convex body $K\subset S^2$ with
    $\area(K)\leq\area(U_{\pi-w}^\circ) + \varepsilon$, we have $\delta(K,U^\circ)\leq\cc_w\sqrt\varepsilon$ for a certain Reuleaux triangle $U$ of width $\pi-w$.
}
\proof
In view of Lemma \ref{lemma:red_dist} we can assume that $K$ is already reduced.
We write $P=U_{\pi-w}^\circ$. The polar of a convex body of constant width on the sphere is again a body of constant width (cf.\ Proposition \ref{prop:constwidthpolar}), therefore we have the following due to \eqref{eq:area:polarity}:
\begin{equation}\label{eq:area:polarReuleaux}
\area\left(P\right)=2\pi-\left(2\pi-\area\left(U_{\pi-w}\right)\right)\cdot\tan\left(\frac{\pi-w}{2}\right).
\end{equation}
Similarly, since $K$ is also of constant width, the area of $K$ is
\begin{equation}\label{eq:area:K}
\area\left(K\right)=2\pi-\left(2\pi-\area\left(K^\circ\right)\right)\cdot\tan\left(\frac{\pi-w}{2}\right).
\end{equation}
By the assumption $\area\left(K\right)\leq \area\left(P\right)+\varepsilon$, using \eqref{eq:area:polarReuleaux} and \eqref{eq:area:K}, we can derive
$$
\area\left(K^\circ\right)\leq\area\left(U_{\pi-w}\right)+\cot\left(\frac{\pi-w}{2}\right)\varepsilon.
$$
As $K^\circ$ is a body of constant width $\pi-w<\frac{\pi}{2}$, we can use Theorem~\ref{BL-stab}. Hence,
$$
\delta\left(K^\circ,U\right)\leq\cc_w\varepsilon
$$
for some positive constant $\cc_w$ (depending only on $w$) and a Reuleaux triangle $U$ of width $\pi-w$. If $\varepsilon<\varepsilon_w\coloneq \cc_w^{-1}\tfrac \pi 2$, we can apply Lemma \ref{lemma:Hausdorff:polarity} and thus conclude the proof.
\endproof

One cannot hope for a bound of the form $\delta(K,U^\circ) \leq \cc_w \varepsilon$ in Theorem \ref{intro:thm:largewidthstab}. In fact, the right hand side has to be of order $\varepsilon^{2/3}$ or lower. To construct our sequence of examples, we fix a width $w>\tfrac \pi 2$ and $\varepsilon>0$. Let $U_{\pi-w}^\circ$ be a particular polar Reuleaux triangle of width $w$. By Proposition \ref{prop:uwcirc} we can express $U_{\pi-w}^\circ$ as the convex hull of three disks $B_1$, $B_2$ and $B_3$ of radius $w-\tfrac \pi 2$ centered at the vertices of a regular triangle of side length $\pi-w$. Hence, the boundary of $U_{\pi-w}^\circ$ is composed of three circular arcs and three geodesic segments. Let $x_1$ denote the center of $B_1$ and let $m_1$ denote the midpoint of the circular arc corresponding to $B_1$. Let $y$ be a point of distance $\varepsilon$ to $m_1$ such that $m_1\in [x_1,y]$. Then we define \(K_\varepsilon = \conv(U_{\pi-w}^\circ\cup\{y\}) \).

\begin{prop}
    \label{prop:23}
    In the above notation, the following statements hold for sufficiently small $\varepsilon>0$:
    \begin{enumerate}
        \item $w(K_\varepsilon) = w$,
        \item $\delta(K_\varepsilon,U^\circ) \geq \cc_w\varepsilon$ for all Reuleaux triangles $U$ of width $w$.
        \item $\area(K_\varepsilon) - \area(U_{\pi-w}^\circ) \leq \cc_w\varepsilon^{3/2}$
    \end{enumerate}
\end{prop}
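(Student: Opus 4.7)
The plan is to verify the three claims separately. Items (1) and (2) rest on the constant-width geometry of $U_{\pi-w}^\circ$, while (3) is a direct spherical trigonometric estimate on the area of the ``bump'' added to $U_{\pi-w}^\circ$.

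For (1), the inclusion $U_{\pi-w}^\circ \subseteq K_\varepsilon$ and the monotonicity of width give $w(K_\varepsilon) \geq w$. For the upper bound I would pick a supporting lune $L$ of $U_{\pi-w}^\circ$ of breadth $w$ with $m_1 \in \inte L$; this is possible because the supporting lunes of breadth $w$ form a one-parameter family and only finitely many among them have a bounding great circle through $m_1$. Since $d(m_1, y) = \varepsilon$, continuity gives $y \in \inte L$ for all sufficiently small $\varepsilon$, whence $K_\varepsilon = \conv(U_{\pi-w}^\circ \cup \{y\}) \subseteq L$ and $w(K_\varepsilon) \leq b(L) = w$.

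For (2), I would argue via a diameter comparison. The points $m_1', x_1, m_1, y$ all lie on the great circle perpendicularly bisecting $[x_2, x_3]$, with $d(x_1, m_1) = w - \tfrac{\pi}{2}$ by construction and $d(m_1, m_1') = w$ by the constant-width structure of $U_{\pi-w}^\circ$. The cyclic order is $m_1', x_1, m_1, y$, so $d(y, m_1') = w + \varepsilon$ and hence $\diam(K_\varepsilon) \geq w + \varepsilon$. On the other hand, any convex body $U^\circ \subset S^2$ of constant width $w$ has $\diam(U^\circ) = w$: Lemma \ref{lemma:luneslines} implies that the diameter of a convex body equals $\pi$ minus the width of its polar, and Proposition \ref{prop:constwidthpolar} gives $w(U) = \pi - w$, so $\diam(U^\circ) = \pi - (\pi - w) = w$. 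The standard Hausdorff--diameter estimate then yields $\delta(K_\varepsilon, U^\circ) \geq \tfrac 12(\diam(K_\varepsilon) - \diam(U^\circ)) \geq \tfrac \varepsilon 2$, so (2) holds with $\cc_w = \tfrac 12$.

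For (3), let $t_1^\varepsilon, t_2^\varepsilon \in \partial B_1$ denote the tangent points from $y$ to the circle $\partial B_1$; for small $\varepsilon$ they lie on the arc of $\partial B_1$ that forms part of $\partial U_{\pi-w}^\circ$. The bump $K_\varepsilon \setminus U_{\pi-w}^\circ$ is bounded by the arc through $m_1$ from $t_1^\varepsilon$ to $t_2^\varepsilon$ together with the geodesic segments $[y, t_i^\varepsilon]$. Its area equals the area of the spherical quadrilateral $[x_1, t_1^\varepsilon, y, t_2^\varepsilon]$ (the union of two congruent right triangles, with right angles at the $t_i^\varepsilon$) minus the area of the sector of $B_1$ with apex $x_1$ subtended by this arc. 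The spherical Pythagorean theorem applied to $[x_1, t_i^\varepsilon, y]$ gives $d(y, t_i^\varepsilon) = O(\sqrt\varepsilon)$, and the spherical sine rule gives $\angle(t_i^\varepsilon, x_1, y) = O(\sqrt\varepsilon)$ as $\varepsilon \to 0$. Both the quadrilateral area (computed via Lemma \ref{TriangleArea}) and the sector area are therefore of order $\sqrt\varepsilon$, but a Taylor expansion around $\varepsilon = 0$ shows that the $O(\sqrt\varepsilon)$ contributions cancel, leaving a remainder of order $\varepsilon^{3/2}$, which is precisely the asserted bound. The main obstacle is verifying this cancellation and tracking the resulting $w$-dependent constant $\cc_w$; geometrically, it is the spherical analogue of the Euclidean fact that a parabolic sliver of dimensions $\sqrt \varepsilon \times \varepsilon$ over a smooth arc of positive geodesic curvature has area $\Theta(\varepsilon^{3/2})$.
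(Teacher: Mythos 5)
Your proposal follows essentially the same route as the paper's proof: for (1) a supporting lune of breadth $w$ whose interior captures $y$; for (2) the diameter comparison $\diam(K_\varepsilon)\geq \diam(U^\circ)+\varepsilon$ with $\diam(U^\circ)=w$ via polarity, followed by the Hausdorff--diameter estimate; for (3) the decomposition of the bump into two right triangles minus a circular sector and a Taylor expansion in $\sqrt\varepsilon$, whose leading-order cancellation you correctly identify as the crux (and which the paper likewise only asserts). The argument is correct.
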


\begin{proof}
    For the first statement we recall that the diameter of $U_{\pi-w}$ is attained by any segment that joins a vertex of $U_{\pi-w}$ with its opposite arc. Under the polarity operation, these diameter realizing segments correspond to width realizing lunes of $U_{\pi-w}^\circ$ (cf.\ Lemma \ref{lemma:luneslines}) that support $U_{\pi-w}^\circ$ at one of the segments in its boundary and the opposite circular arc. Thus, there is a width realizing lune of $U_{\pi-w}^\circ$ that contains $y$ in its interior (since $\varepsilon$ is small). It follows that $w(K_\varepsilon)=w(U_{\pi-w}^\circ)$ since $U_{\pi-w}^\circ$ is a subset of $K_\varepsilon$.

    For the second statement let $\ell$ be the supporting line to $U_{\pi-w}$ at any of its three vertices $v$ such that $\ell$ has the same angle with both of the adjacent circular arcs at $v$ in $\partial U_{\pi-w}$. Since $U_{\pi-w}$ is of constant width, there exists a width realizing lune $L$ of $U_{\pi-w}$ that uses $\ell$. By symmetry, the second geodesic line that defines $L$ is the supporting line of $U_{\pi-w}$ at the midpoint of the circular arc in $\partial U_{\pi-w}^\circ$ opposite to $v$. 
    
    Under polarity, this lune corresponds to a diameter realizing segment $S$ of $U_{\pi-w}^\circ$ (cf.\ Lemma \ref{lemma:luneslines}) that connects the midpoint of one of the circular arcs to the midpoint of the geodesic segment opposite to that arc. If we choose the circular arc to be the one contained in $B_1$ then, by symmetry, the center $x_1$ of $B_1$ is contained in $S$. Hence, $\diam(K_\varepsilon) \geq \diam(U_{\pi-w}^\circ) + \varepsilon$. The claim now follows through the same reasoning as in the proof of Proposition \ref{prop:optimality}.

    For the last statement, using that $\varepsilon>0$ is sufficiently small, it is enough to compute $\area(\conv(C) - \area(B_1)$, where $C = \conv (B_1 \cup\{y\})$.
    Let $z,z'\in\partial B_1$ be the points for which the the segments $[y,z]$ and $[y,z']$ are  contained in $\partial C$. Consider the triangle $T = [x_1,y,z]$. 
    This triangle has a right angle at $z$ and side length $r:=\pi-w,~ r+\varepsilon$ and $s(\varepsilon):=\arccos(\cos(r+\varepsilon)/\cos(r))$. Let $\alpha$ be its angle at $x_1$ and $\beta$ its angle at $y$. Then,
    \[
     \alpha(\varepsilon) = \arcsin\left(\frac{\sin(s(\varepsilon))}{\sin(r+\varepsilon)}   \right)\quad\text{and}\quad \beta(\varepsilon) = \arcsin\left(\frac{\sin(r)}{\sin(r+\varepsilon)}   \right).
    \]
    It follows that
    \[\begin{split}
    \area(C) - \area(B_1) &= 2(\area(T) - \area(T\cap B_1)) = 2(\alpha(\varepsilon) + \beta(\varepsilon) - \tfrac \pi 2 - \alpha(\varepsilon) (1-\cos(r)))\\
    &=2(\cos(r) \alpha(\varepsilon) + \beta(\varepsilon) - \tfrac \pi 2)=:f(\varepsilon).
    \end{split}\]
    The function $f(\varepsilon)$ is not differentiable at $\varepsilon=0$, but the function $g(\varepsilon)=f(\varepsilon^2)$ is differentiable, and computing a taylor approximation of $g$ around $\varepsilon=0$ we find
    \[
        g(\varepsilon) \leq \frac{1}{3\sqrt 2 }\sqrt{\tan(r)}\varepsilon^3 + \cc_w'\varepsilon^4.
    \]
    This implies $f(\varepsilon) \leq \cc_w \varepsilon^{3/2} $ as desired.
\end{proof}


\begin{thebibliography}{10}

\bibitem{Ara96}
P.~V. Ara\'ujo.
\newblock Barbier's theorem for the sphere and the hyperbolic plane.
\newblock {\em Enseign. Math. (2)}, 42(3-4):295--309, 1996.

\bibitem{Ara97}
P.~V. Ara\'ujo.
\newblock Minimum area of a set of constant width in the hyperbolic plane.
\newblock {\em Geom. Dedicata}, 64(1):41--53, 1997.

\bibitem{Bez21}
K.~Bezdek.
\newblock On a strengthening of the {B}laschke-{L}eichtweiss theorem.
\newblock {\em J. Geom.}, 114(1):Paper No. 2, 14, 2023.

\bibitem{Bez00}
K.~Bezdek and G.~Blekherman.
\newblock Danzer-{G}r\"unbaum's theorem revisited.
\newblock {\em Period. Math. Hungar.}, 39(1-3):7--15, 1999.
\newblock Discrete geometry and rigidity (Budapest, 1999).

\bibitem{Bla15}
W.~Blaschke.
\newblock Konvexe {B}ereiche gegebener konstanter {B}reite und kleinsten
  {I}nhalts.
\newblock {\em Math. Ann.}, 76(4):504--513, 1915.

\bibitem{BoS20}
K.~J. B\"or\"oczky and {\'{A}}.~Sagmeister.
\newblock The isodiametric problem on the sphere and in the hyperbolic space.
\newblock {\em Acta Math. Hungar.}, 160(1):13--32, 2020.

\bibitem{BoS22}
K.~J. B\"or\"oczky and {\'A}.~Sagmeister.
\newblock Convex bodies of constant width in spaces of constant curvature and
  the extremal area of {R}euleaux triangles.
\newblock {\em Studia Sci. Math. Hungar.}, 59(3-4):244--273, 2022.

\bibitem{BoS23}
K.~J. B\"or\"oczky and {\'A}.~Sagmeister.
\newblock Stability of the isodiametric problem on the sphere and in the
  hyperbolic space.
\newblock {\em Adv. in Appl. Math.}, 145:Paper No. 102480, 49, 2023.

\bibitem{Dek95}
B.~V. Dekster.
\newblock Completeness and constant width in spherical and hyperbolic spaces.
\newblock {\em Acta Math. Hungar.}, 67(4):289--300, 1995.

\bibitem{Gla95}
S.~Glasauer.
\newblock {\em Integralgeometrie konvexer {K}\"orper im sph\"arischen {R}aum}.
\newblock Universit\"at {F}reiburg, PhD Thesis, 1995.

\bibitem{HW21}
H.~Han and D.~Wu.
\newblock Constant diameter and constant width of spherical convex bodies.
\newblock {\em Aequationes Math.}, 95(1):167--174, 2021.

\bibitem{H78}
E.~Heil.
\newblock Kleinste konvexe {K}\"orper gegebener {D}icke.
\newblock {\em Preprint, Fachbereich Mathematik der TH Darmstadt (453)}, 1978.

\bibitem{La15}
M.~Lassak.
\newblock Width of spherical convex bodies.
\newblock {\em Aequationes Math.}, 89(3):555--567, 2015.

\bibitem{La201}
M.~Lassak.
\newblock Diameter, width and thickness of spherical reduced convex bodies with
  an application to {W}ulff shapes.
\newblock {\em Beitr. Algebra Geom.}, 61(2):369--379, 2020.

\bibitem{La20}
M.~Lassak.
\newblock Spherical geometry---a survey on width and thickness of convex
  bodies.
\newblock In {\em Surveys in geometry {I}}, pages 7--47. Springer, Cham, 2022.

\bibitem{Leb14}
H.~Lebesgue.
\newblock Sur le probl\`eme des isop\'erim\`etres et sur les domaines de
  largeur constante.
\newblock {\em Bull. Soc. Math. France}, pages 72--76, 1914.

\bibitem{Lei05}
K.~Leichtweiss.
\newblock Curves of constant width in the non-{E}uclidean geometry.
\newblock {\em Abh. Math. Sem. Univ. Hamburg}, 75:257--284, 2005.

\bibitem{LZ24}
I.~Lucardesi and D.~Zucco.
\newblock Three quantitaive versions of the {P}\'{a}l inequality.
\newblock {\em Preprint, ar{X}iv:2405.18294}, 2024.

\bibitem{Pal}
J.~P\'al.
\newblock Ein {M}inimumproblem f\"ur {O}vale.
\newblock {\em Math. Ann.}, 83(3-4):311--319, 1921.

\bibitem{Sch48}
E.~Schmidt.
\newblock Die {B}runn-{M}inkowskische {U}ngleichung und ihr {S}piegelbild sowie
  die isoperimetrische {E}igenschaft der {K}ugel in der euklidischen und
  nichteuklidischen {G}eometrie. {I}.
\newblock {\em Math. Nachr.}, 1:81--157, 1948.

\bibitem{Sch49}
E.~Schmidt.
\newblock Die {B}runn-{M}inkowskische {U}ngleichung und ihr {S}piegelbild sowie
  die isoperimetrische {E}igenschaft der {K}ugel in der euklidischen und
  nichteuklidischen {G}eometrie. {II}.
\newblock {\em Math. Nachr.}, 2:171--244, 1949.

\bibitem{Sch14}
R.~Schneider.
\newblock {\em Convex bodies: the {B}runn-{M}inkowski theory}, volume~44 of
  {\em Encyclopedia of Mathematics and its Applications}.
\newblock Cambridge University Press, Cambridge, 1993.

\bibitem{Wal}
R.~Walter.
\newblock {\em Einf\"uhrung in die {A}nalysis: 2}.
\newblock De Gruyter Lehrbuch. De Gruyter, Berlin, 2007.

\end{thebibliography}
\end{document}